\newdimen\margin   
\def\COMMENT#1{}
\let\COMMENT=\footnote
\newenvironment{remark}{\noindent {\bf Remark}.}{\par\smallskip\par}
\newcommand{\eps}{\varepsilon}
\newcommand{\al}{\alpha}
\newcommand{\prob}{\mathbb{P}}
\newcommand{\ex}{\mathbb{E}}
\newcommand{\Pa}{{\mathcal P}}
\newcommand{\E}{{\mathcal E}}
\newcommand{\C}{{\mathcal C}}
\newcommand{\X}{{\mathcal X}}
\newcommand{\A}{{\mathcal A}}
\newcommand{\T}{{\mathcal T}}
\newcommand{\G}{{\mathcal{G}}}
\newcommand{\N}{{\mathcal N}}
\newcommand{\Exx}{\mathrm{Ex}}
\newcommand{\Nl}{\mathsf{N}}
\newcommand{\Cl}{\mathsf{C}}
\newcommand{\Bl}{\mathsf{B}}
\newcommand{\Y}{{\mathcal{Y}}}
\newcommand{\BP}{{\mathbb{P}}}
\newcommand{\CN}{{\mathcal{N}}}
\newcommand{\CS}{{\mathcal{S}}}
\newcommand{\CP}{{\mathcal{P}}}
\newcommand{\CH}{{\mathcal{H}}}
\newcommand{\CT}{{\mathcal{T}}}
\newcommand{\Net}{{\mathtt{Net}}}
\newcommand{\Be}{{\mathtt{Be}}}
\newcommand{\Ser}{{\mathtt{Ser}}}
\newcommand{\Par}{{\mathtt{Par}}}
\newcommand{\sh}{{\mathtt{sh}}}
\newcommand{\Set}{{\mathsf{Set}}}
\renewcommand{\Pr}[1]{\mathbb{P}\left(#1\right)}
\renewcommand{\E}[1]{\mathbb{E}\left(#1\right)}
\newcommand{\keyword}[1]{\textbf{#1}~}
\newcommand{\IF}{\keyword{if}}
\newcommand{\THEN}{\keyword{then} }
\newcommand{\ELSE}{\keyword{else}}
\newcommand{\RETURN}{\keyword{return}}
\newcommand{\FOR}{\keyword{for}}
\newcommand{\FOREACH}{\keyword{foreach}}
\newtheorem{firsttheorem}{Proposition}
\newtheorem{theorem}[firsttheorem]{Theorem}
\newtheorem{lemma}[firsttheorem]{Lemma}
\newtheorem{corollary}[firsttheorem]{Corollary}
\newtheorem{definition}[firsttheorem]{Definition}
\newtheorem{proposition}[firsttheorem]{Proposition}
\numberwithin{equation}{section}
\numberwithin{firsttheorem}{section}
\begin{document}
\title{3-Connected Cores In Random Planar Graphs}

\author{Nikolaos Fountoulakis and Konstantinos Panagiotou}
\maketitle
\vspace{-0.75cm}
\begin{center}
 \small Max-Planck-Institute for Informatics \\
Saarbr\"ucken, Germany
\end{center}

\begin{abstract}
The study of the structural properties of large random planar graphs has become in recent years a field of intense research in computer science and discrete mathematics. Nowadays, a random planar graph is an important and challenging model for evaluating methods that are developed to study properties of random graphs from classes with structural side constraints. 

In this paper we focus on the structure of random biconnected planar graphs regarding the sizes of their 3-connected building blocks, which 
we call \emph{cores}. In fact, we prove a general theorem regarding random biconnected graphs from various classes.
If $\Bl_n$ is a graph drawn uniformly at random from a class $\mathcal{B}$ of labeled biconnected graphs, then we show 
that with probability $1-o(1)$ as $n \rightarrow \infty$, $\Bl_n$ belongs to exactly one of the following categories:
\begin{itemize}
	\item[(i)] Either there is a unique \emph{giant} core in $\Bl_n$, that is, there is a $0<c = c(\mathcal{B})<1$ such that the largest core contains $\sim cn$ vertices, and every other core contains at most $n^{\alpha}$ vertices, where $0 < \alpha = \alpha({\mathcal B}) < 1$;
	\item[(ii)] or all cores of $\Bl_n$ contain $O(\log n)$ vertices.
\end{itemize}
Moreover, we find the critical condition that determines the category to which $\Bl_n$ belongs, and also provide sharp concentration results for the counts of cores of all sizes between 1 and~$n$. As a corollary, we obtain that a random biconnected planar graph belongs to category (i), where in particular $c = 0.765\dots$ and $\alpha = 2/3$.
\end{abstract}

\section{Introduction}

A fundamental discipline of computer science is the theoretical and practical evaluation of the performance of algorithms. From a theoretical viewpoint, many important computational problems turn out to be $\mathcal{NP}$-hard or even very difficult to approximate, which leaves little hope for finding efficient algorithms that solve all instances. On the other hand, in practice it is widely observed that one can find satisfactory solutions efficiently, even when using algorithms that are known to perform very badly on certain inputs. In other words, it seems that ``typical'' instances are in some sense easier and the classical worst-case analysis may be too pessimistic. This motivates the study of the performance of algorithms from an \emph{average} point of view.  However, in order to perform such an analysis, it is necessary to specify an appropriate \emph{probability distribution} on the set of all inputs.

After having a suitable model at hand, an important ingredient of a meaningful average-case analysis is the precise knowledge of the typical structure of an input sampled from the specified distribution. For instance, in the context of graph problems, one wants to study the properties of graphs in the corresponding random graph model. However, this may be a very challenging task: if the space of inputs consists of graphs with global structural constraints, then the existence of dependencies can make the analysis formidable.

A natural graph class with such constraints that has attracted the attention of researchers in computer science and discrete mathematics in recent years is the class of \emph{random planar graphs}. In fact, since it was first studied by Denise, Vasconcellos, and Welsh~\cite{DVW}, this model has evolved as the primary example of studying random graphs from constrained classes, see e.g.\ \cite{MSW,GimNoy1}. From the perspective of the average-case analysis, it would be helpful to know that although a random planar graph inherits the dependencies that arise from the requirement of planarity, essentially most of its vertices behave independently of each other. However, the planarity condition makes almost all the tools and methods that have been used in the past decades for the analysis of classical random graphs models fail in the case of such a problem. Consequently, the development of new approaches is necessary and essential. 

One attempt to resolve this issue was taken recently by the second author and Steger~\cite{PanSteg}. The precise setting they considered is as follows. Let $\mathcal{C}$ be a class of labeled connected graphs, and let $\Cl_n$ be a random graph from $\mathcal{C}$ with $n$ vertices. The main idea in their work is to consider the \emph{maximal biconnected components} of a connected graph $\Cl_n$. In this context, they showed, among other results, that under certain assumptions $\Cl_n$ belongs a.a.s.\footnote{asymptotically almost surely, i.e.\ with probability tending to 1 when $n\to\infty$} to exactly one of the following categories:
\begin{itemize}
	\item[(i)] There is a unique \emph{giant} biconnected component in $\Cl_n$. More precisely, there is a $0<c = c(\mathcal{C})<1$ such that the largest biconnected component contains $\sim cn$ vertices, while every other component contains $o(n)$ vertices.
	\item[(ii)] All biconnected components contain $O(\log n)$ vertices.
\end{itemize}
Additionally, in \cite{PanSteg} it was shown that random planar graphs belong to the former category, whereas e.g.\ random outerplanar graphs belong to the latter. Observe that for graphs that belong to category (ii), almost all pairs of vertices lie in different biconnected components, while this is not the case for graphs from the first category. A consequence of these facts is the following important observation. Random graphs from classes that belong to category (ii) ``contain'' in a well-defined sense plenty of independence. In particular, any such graph can be generated by choosing independently every one of its biconnected components, and gluing them together at the cut-vertices. As the biconnected components contain few vertices, and as they intersect each other only at single vertices, the \emph{impact} of each block to the whole graph is small. So, such graphs resemble in a certain way the behavior of classical random graphs, where each edge is included independently with a specified probability, with the difference that here we choose the blocks independently of each other. However, random graphs from classes that belong to category~(i) do not have this property: a lot of structure that we cannot control is ``hidden'' in the giant biconnected component, which contains a constant fraction of the vertices.

This motivates a finer analysis regarding the typical structure of random biconnected graphs, which is the main topic of this work. In particular, we investigate how and under which conditions we can decompose a biconnected graph into building blocks of higher complexity. As we shall see shortly, such a decomposition is well-known and possible. We will show that we encounter again a fundamental \emph{dichotomy}: depending on some critical condition, which we determine explicitly, we prove that large biconnected graphs have either only ``small'' building blocks of higher complexity, or a constant fraction of the vertices is contained in such a building block. Hence, we discover a picture that is completely analogous to the distribution of the sizes of the blocks in random connected graphs.

Concerning the actual analysis, our methods not only differ, but also extend significantly some of the ideas presented in~\cite{PanSteg} with respect to two major issues. Firstly, essential ingredients of our proofs are precise asymptotic estimates for the number of biconnected graphs in the class in question. Here we resolve this issue in an analytic context and obtain very general and precise enumeration results that are applicable in a wide variety of scenarios frequently encountered in modern theories of asymptotic enumeration. Secondly, regarding the combinatorial aspect, the further decomposition of biconnected graphs in building blocks of higher complexity is considerably more involved than the decomposition of connected graphs into their biconnected components. So, although we use the same basic idea as in~\cite{PanSteg}, namely to sample algorithmically a graph from the class in question, the nature of the resulting algorithms is significantly more complex and cannot be analyzed with standard methods from probability theory. We resolve this issue by proposing a powerful method that relates by equation systems the simultaneous evolution of \emph{several} random variables during the sampling process. We believe that this analysis opens up the possibility to study parameters of~\emph{arbitrarily} complex classes that can be described within the framework of the \emph{symbolic method}, which nowadays is a widely used tool to decompose various classes of combinatorial objects.

\subsection*{Our results}
The main idea of graph classification and enumeration that we will exploit goes back to Tutte~\cite{Tutte} and consists of describing graphs in terms of building blocks of higher \emph{connectivity}. For example, an arbitrary graph can be described by its connected components. Similarly, any connected graph decomposes uniquely into the set of its maximal biconnected components. For the class of biconnected graphs there is a similar decomposition with respect to 3-connected building blocks. Informally, 
one can give a recursive description of biconnected graphs in which 3-connected graphs have their edges replaced by biconnected graphs. This decomposition has been formalized through the notion of a \emph{network} whose precise definition will be given in Section~\ref{Sec:Nets}. The 3-connected building blocks which play the above role are called the \emph{cores} of a biconnected graph.  Using this decomposition, we can define classes of biconnected graphs by restricting the cores to be within a certain class of 3-connected graphs. For example, if we use the class of all 3-connected planar graphs as the base of the decomposition, then we obtain the classes consisting of all biconnected, connected, and general planar graphs. 

In this paper we study the distribution of the sizes of the cores in large random biconnected graphs, that are specified in Tutte's sense by their 3-connected building blocks. Our first result, in a very weakened form, is about random planar graphs. Here and in the remainder we write ``$x \pm y$'' for the interval $(x-y, x+y)$.
\begin{theorem} \label{thm:planar}
Let $\Bl_n$ denote a random labeled biconnected planar graph with $n$ vertices, and let $\omega = \omega(n)$ be any function such that $\lim_{n\to\infty}\omega(n)=\infty$. Let $\eps >0$. Then, $\Bl_n$ contains a.a.s.\ a \emph{unique} largest core with $(1\pm \eps)cn$ vertices, where $c=0.765\ldots$ is given explicitely. Moreover, every other core contains at most $n^{2/3}\omega(n)$ vertices, and there are $(1\pm\eps)c_\ell n$ cores with $\ell$ vertices, where $4 \le \ell = O(n^{2/3})$ and the $c_\ell$ are given explicitely.
\end{theorem}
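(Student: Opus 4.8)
The plan is to derive Theorem~\ref{thm:planar} as a special case of the general dichotomy theorem stated in the abstract, applied to the class $\mathcal{B}$ of labeled biconnected planar graphs, whose cores lie in the class of $3$-connected planar graphs. Concretely, I would first recall the singularity analysis of the generating functions involved. By the network decomposition (Section~\ref{Sec:Nets}), if $B(x,y)$, $D(x,y)$, and $T(x,y)$ denote the (bivariate, edge-marked) generating functions of biconnected, network, and $3$-connected planar graphs respectively, one has a system of functional equations relating them via the substitution ``replace edges of $3$-connected graphs by networks.'' The key external input is the precise asymptotic enumeration of $3$-connected planar graphs, which has a singularity expansion of type $(1-z/\rho)^{3/2}$; the exponent $3/2$ is exactly what produces the subcritical polynomial bound $n^{2/3}$ on the non-giant cores and the $\alpha=2/3$ in the statement.

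The main structural step is to identify the critical condition of the general theorem and verify on which side the planar class falls. Following the strategy indicated in the introduction, I would set up a Boltzmann-type sampler that generates $\Bl_n$ by first drawing the ``skeleton'' from the $3$-connected layer and then recursively expanding each edge into a network; the size of the largest core is then tracked through the evolution of a small collection of random variables whose joint behavior is governed by an equation system. The dichotomy hinges on whether the composition $x \mapsto$ (network substitution) is \emph{critical} or \emph{subcritical} at the dominant singularity of $B(x,y)$: for planar graphs the relevant composition turns out to be subcritical, which forces case (i) — a unique giant core of linear size $cn$ — and the constant $c$ is obtained as an explicit ratio of derivatives of $D$ and $T$ evaluated at the singularity (numerically $0.765\ldots$). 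The sharp concentration $(1\pm\eps)cn$ follows from a second-moment or exponential-tail argument applied to the sampler, exactly as in the general theorem.

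For the counts of small cores, I would argue that a core with $\ell$ vertices, $4 \le \ell = O(n^{2/3})$, appears as a ``detached'' $3$-connected component attached at a virtual edge, and the expected number of such cores is asymptotically $c_\ell n$ where $c_\ell$ is read off from the coefficient asymptotics of the $3$-connected generating function composed with the network substitution evaluated at the singularity of $B$. Concentration around this mean again comes from bounding the variance, using that distinct potential core-locations are nearly independent once we condition on the giant core — this is the step the general theorem is designed to handle. Finally, to rule out a second large core and obtain the clean bound $n^{2/3}\omega(n)$, I would use a union bound over sizes $\ell$ in the range $[n^{2/3}\omega(n), cn/2]$, showing the expected number of cores of each such size is $o(1/n)$ because the subcritical exponent $3/2$ makes $c_\ell$ decay like $\ell^{-5/2}$, so the tail sum is $o(1)$.

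The hard part will be the precise analytic enumeration: establishing that the $3$-connected planar generating function, and hence the composed network generating function $B(x,y)$, has a singularity of the claimed square-root type with computable constants, and that the relevant composition is genuinely subcritical rather than critical — a borderline verification that requires controlling the location and nature of the dominant singularity through the functional equations. A secondary obstacle is the combinatorial bookkeeping in the sampler: unlike the block decomposition of connected graphs, the network decomposition is recursive (networks contain networks), so the random variables tracking core sizes do not evolve as a simple branching process, and one must carefully justify that the system of equations for their generating functions is well-posed and yields the stated concentration.
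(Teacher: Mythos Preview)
Your approach is essentially the paper's: Theorem~\ref{thm:planar} is not proved separately but is read off as the specialization of Theorem~\ref{Main:Critical} (together with Proposition~\ref{prop:transferNB} to pass from networks to biconnected graphs) once one knows that the egf of 3-connected planar networks has a singular expansion with exponent $\alpha = 3/2$, and you have the right ingredients (Boltzmann sampler, system of equations for the relevant random variables, Chernoff-type concentration, counting argument for uniqueness of the giant core).

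One terminological point to fix: you say planar falls in the \emph{subcritical} regime and that this forces the giant core. In the paper's convention it is the opposite --- the subcritical case is $\Phi_z(\rho_N,1,N_0)>0$, where $\rho_N < \rho_T(N_0)$, the singularity of $N$ is a genuine branch point of square-root type, and all cores are $O(\log n)$ (Theorem~\ref{Main:Subcritical}). Planar sits in the \emph{critical/supercritical} case $\Phi_z(\rho_N,1,N_0)<0$, where $\rho_N = \rho_T(N_0)$ and the singularity of $N$ is inherited from $T$; it is this inheritance that produces the giant core and the polynomial gap $n^{1/\alpha}=n^{2/3}$. Also, the uniqueness of the giant core in the paper is obtained not by a first-moment union bound on core sizes but by a direct enumeration (Lemma~\ref{lem:uniquelarge}): one splits a putative network with two large cores at a separating edge or at the poles and shows the resulting count is $o(|\mathcal{N}_n|)$ via the coefficient asymptotics~\eqref{eq:NetsCount}. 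Your $\ell^{-5/2}$ heuristic is morally the same computation.
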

The main results of this paper are far more general. In fact, we show that a random planar graph is  just a special case of a universal phenomenon. Let $\Bl_n$ be a graph drawn uniformly at random from a class $\mathcal{B}$ of labeled biconnected graphs. In Theorems~\ref{Main:Subcritical} and~\ref{Main:Critical} we determine a critical condition according to which $\Bl_n$ belongs a.a.s.\ to exactly one of those categories:
\begin{itemize}
	\item[(i)] There is a unique \emph{giant} core in $\Bl_n$: there are $0<  c=c(\mathcal{B}), \alpha = \alpha(\mathcal{B}) <1$ such that the largest core contains $(1\pm \eps)cn$ vertices, while every other core contains $O(n^{\alpha})$ vertices.
	\item[(ii)] All cores of $\Bl_n$ contain $O(\log n)$ vertices.
\end{itemize}
Moreover, in each of the above cases we determine for all $4 \le \ell\le n$ the expected number of cores with $\ell$ vertices, and provide very sharp accompanying large deviation results. We refer the reader to Section~\ref{Sec:Nets} for the formal presentation of our setting, and to Theorems~\ref{Main:Subcritical} and~\ref{Main:Critical} for a precise formulation of the above statements.

Our results have implications for several classes of biconnected graphs different from planar graphs. Let us mention selectively two examples. Writing $\Exx (G_1,\ldots, G_k)$ for the class of biconnected graphs that
do not contain $G_1,\ldots, G_k$ as minors, the main result in~\cite{GimNoy} together with our result imply 
that a random graph from $\Exx (K_{3,3})$ on $n$ vertices a.a.s.\ exhibits a behavior as above: it has a unique giant core whereas 
every other core is of lower order. Also, for the class $\Exx (K_5-e)$, which consists of all biconnected graphs whose cores belong to the set 
$\{K_{3,3}, K_3\times K_2\}\cup \{ W_n\}_{n\geq 3}$, where $W_n$ denotes a wheel on $n+1$ vertices, again the main result in~\cite{GimNoy} together with our results imply that a random graph on $n$ vertices from this class contains a.a.s.\ only cores having at most $O(\log n)$ vertices.

\subsection*{Outline}
As we mentioned above, the decomposition of a class of biconnected graphs is facilitated by the notion of a network, which we formally describe in the next section. This is a recursive decomposition that will allow us in Section~\ref{Sec:Samplers} to design efficient algorithms that sample networks according to the so-called \emph{Boltzmann model}. Such ideas were used for the first time e.g. in~\cite{BerPanSte}, where the number of vertices of a given degree in random dissections of convex polygons was studied.
However, the nature of the decomposition in this work is very complex, and this makes the analysis of the samplers significantly more involved compared to the analysis in~\cite{PanSteg,BerPanSte}. In particular, in order to sample networks, we use four different randomized algorithms, which call each other recursively in a nested and unpredictable fashion. This difficulty requires a completely new treatment as far as the analysis is concerned, and the details are presented in Section~\ref{Sec:Samplers}.

Another crucial ingredient in our proof is a precise counting estimate for the number of networks, parametrized by the number of vertices and edges. To this end, we extend and further develop in Section~\ref{sec:singAnalysis} certain known analytic tools, in order to make them applicable in our actual setting. Furthermore, they allow us to show a central limit theorem as well as tail bounds for the number of edges of a (general) random network with $n$ vertices. Finally, in Section~\ref{Sec:Proofs} we give the proofs of the main theorems and in Section~\ref{Sec:remProofs} we give the proofs of a number of auxiliary results which are used in our arguments.

\subsection*{Notational preliminaries}

At this point let us introduce some necessary notation.  
Let~$\C$ be a class of labeled graphs. For any positive integers~$n,m \geq 1$, 
we denote by~$\C_{n,m}$ the subclass of~$\C$ consisting of those graphs that have~$n$ labeled vertices and $m$ edges, and we set $\C_{n} = \cup_{m \ge 0} \C_{n,m}$.
Moreover, we denote by~$C(x,y)$ the \emph{exponential generating function (egf)} for~$\C$, where 
$x$ marks the vertices and~$y$ marks the edges, i.e., the coefficient $[x^ny^m]C(x,y)$ equals $\frac{|C_{n,m}|}{n!}$. 
For a given~$y$, we denote by~$\rho_{C}(y)$ the singularity of~$C(x,y)$ with respect to~$x$ and, in particular, when~$y=1$ 
we write~$\rho_C = \rho_C(1)$. Also, we write~$C_0(y):= C(\rho_C(y),y)$ and~$C_0 := C(\rho_C,1)$. 

We now introduce a number of operations between combinatorial classes that are at the core of our analysis. 
For two graph classes~$\X$ and~$\Y$ we denote by~$\X \times \Y$ the \textit{cartesian product} of~$\X$ and~$\Y$ followed by a relabeling step, so as to guarantee that all labels are distinct. Note that the relation ``$\A = \X \times \Y$'' expresses the fact that there is a bijection between the elements of~$\A$ and pairs of elements from~$\X$ and~$\Y$, but it does not provide any information about how this bijection looks like, i.e., how to construct a graph in~$\A$ from two graphs in~$\X$ and~$\Y$. The same is true for the operators described in the remainder. The class $\X + \Y$ consists of graphs that are either in $\X$ or in $\Y$. We denote by~$\Set_{\ge k}(\X)$ the graph class such that each object in it is an unordered collection of at least $k $ graphs in~$\X$. Finally, the class~$\X \circ_e \Y$ consists of all graphs that are obtained from graphs from~$\X$, where each edge is replaced by a graph from~$\Y$. Here we will usually assume that $\Y$ is a class of graphs with two distinguished vertices, which simply means that we attach a graph $G_e \in \Y$ at each edge $e\in E(G)$, where $G\in \X$, by identifying the special vertices of $G_e$ with the endpoints of $e$. 
This set of combinatorial operators (cartesian product, disjoint union, set, and substitution) appears frequently in modern theories of combinatorial enumeration, and it is beyond the scope of this work to survey them. For a very detailed description and numerous applications we refer to \cite{FlajSed} and references therein.

\section{Networks} \label{Sec:Nets}

\newcommand{\CB}{\mathcal{B}}
\newcommand{\CZ}{\mathcal{Z}}

\subsection*{Biconnected Graphs \& Networks}
Let~$\CB$ be a class of labeled biconnected graphs.
In the remainder we assume that the graph consisting of a single edge is in $\CB$. Before we study the typical properties of a graph drawn
uniformly at random from~$\CB_n$, that is, the subclass of $\CB$ consisting of graphs on $n$ vertices, 
let us introduce an auxiliary graph class that plays an important role in the decomposition of biconnected graphs. 
Following Trakhtenbrot~\cite{Traht} and Tutte~\cite{Tutte} we define a \emph{network} as a connected graph with two ``special'' vertices, called the \emph{left pole} and the \emph{right pole}, such that adding the edge between the poles the resulting (multi-)graph is biconnected. 
The remaining vertices of a network are called \emph{labeled vertices}. 
Following standard notation, we will assume that in the egf of a class of networks the parameter $x$ always marks the number of labeled vertices.

The above description provides us with an explicit relation between the class $\CB$ and the (corresponding) class of networks $\CN$, which we shall
describe now. Let~$\vec{\CB}$ be the class containing ordered pairs $(B\setminus e, e)$ where~$B \in \CB$ and $e \in E(B)$, 
that is, an edge of $B$ is distinguished and removed. 
Note that each $\vec{B}\in\vec{\CB}_n$, except from that where the underlying graph is a single distinguished edge, gives rise to 
two networks with $n-2$ labeled vertices: one is obtained by removing the labels from the endpoints of the distinguished non-edge (and relabeling the remaining vertices such
that only labels in $\{1, \dots, n-2\}$ appear), and the other is obtained by adding the distinguished non-edge to the underlying graph of $\vec{B}$,
removing the labels and performing a relabeling as before. Moreover, the single distinguished edge gives rise to the network that consists 
of two isolated poles. 
With~$\mathcal{X}$ denoting the class consisting of a single labeled isolated vertex, and $e$ being the network consisting of a single edge, 
this implies that the classes~$\CB$ and~$\CN$ are due to the definition of~$\CN$ related through
\begin{equation}\label{eq:decompositionNetworksEdgeRooted2Connected}
(\CN + 1)\times {\mathcal X}^2  = (1+e) \times \vec{\CB}.
\end{equation}
Note that this immediately translates to the relation
$
	\frac{\partial B(x,y)}{\partial y} = \frac{x^2}{2} \frac{1 + N(x,y)}{1 + y},
$
which is the well-known dependency linking the egf's for $\CB$ and (the associated) $\CN$, see also e.g.\ \cite{Walsh}.

\subsection*{Our Setting}
The main setting in this work is the following. We will begin with a class of networks $\CN$, which fulfills certain closure properties that are 
described below. This, combined with~\eqref{eq:decompositionNetworksEdgeRooted2Connected}, defines a class of biconnected graphs $\CB$.
Those $\CB$ are precisely the ones that we will consider here. The following statement, along with our probability estimates, implies that the
structural properties of random networks that occur asymptotically almost surely translate into almost sure properties of random biconnected
graphs. Consequently, in the remainder of the paper we  only need to consider random networks. Let $\Pa$ denote a graph property, that is, a
set of graphs which is closed under automorphisms. 
With a slight abuse of terminology we say that a network $N$ has a property $\Pa$, if the graph  that is 
obtained by putting labels to the poles has property $\Pa$. The proof can be found in Section~\ref{ssec:proofNets}.
\begin{proposition}
\label{prop:transferNB}
Let $\CN$ be a class of networks and let $\CB$ be the class of the corresponding biconnected graphs. 
Moreover, let $\Bl_n$ be a uniform random graph from $\CB_n$, and $\Nl_n$ a network that is drawn uniformly at random from $\N_n$. 
Suppose that
$\Pr{\Nl_{n-2} \in \Pa} \ge 1 - f(n-2)$, where $\Pa$ is any property of graphs that is closed under automorphisms. 
Then $\Pr{\Bl_n \in \Pa} \ge 1 - 2\kappa f(n-2)$, where $\kappa n$ is the maximum number of edges in a graph in $\CB_n$.
\end{proposition}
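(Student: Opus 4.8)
The plan is to transfer the property $\Pa$ from a uniformly random network on $n-2$ vertices to a uniformly random biconnected graph on $n$ vertices by passing through the intermediate class of \emph{edge-rooted} biconnected graphs $\vec\CB$ appearing in~\eqref{eq:decompositionNetworksEdgeRooted2Connected}, losing only a bounded factor at each step: a factor $\kappa$ from the spread of the number of edges among graphs of $\CB_n$, and a factor $2$ because only half of the networks carry the property transparently. (I may assume $n$ large; the single-edge graph and other small cases are irrelevant for the asymptotic statements where the proposition is used.) First, put $\CB_n^{\,c}=\{B\in\CB_n:B\notin\Pa\}$, so $\Pr{\Bl_n\notin\Pa}=|\CB_n^{\,c}|/|\CB_n|$. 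Every $B\in\CB_n$ is biconnected, hence of minimum degree $\ge 2$, so $n\le|E(B)|\le\kappa n$; therefore $|\CB_n^{\,c}|\le\tfrac1n\sum_{B\in\CB_n^{\,c}}|E(B)|$ while $|\CB_n|\ge\tfrac1{\kappa n}\sum_{B\in\CB_n}|E(B)|$, and so
\[
\Pr{\Bl_n\notin\Pa}\ \le\ \kappa\cdot\frac{\sum_{B\in\CB_n^{\,c}}|E(B)|}{\sum_{B\in\CB_n}|E(B)|}.
\]
Since $\sum_{B\in\CB_n}|E(B)|$ is the number of edge-rooted biconnected graphs on $n$ vertices, i.e.\ (up to the single-edge exception) the size of $\vec\CB_n$, the right-hand side is $\kappa$ times the probability that the underlying graph of a uniformly random element of $\vec\CB_n$ lies outside $\Pa$.

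For the second step, by~\eqref{eq:decompositionNetworksEdgeRooted2Connected} an edge-rooted graph $(B,e)\in\vec\CB_n$ maps, after deleting the labels of the endpoints of $e$ and relabelling, to the network $N$ obtained from $B$ by declaring the endpoints of $e$ to be the poles; this $N$ has an edge between its poles, and restoring labels to its poles returns a graph isomorphic to $B$, so (as $\Pa$ is isomorphism-closed) $N\in\Pa\iff B\in\Pa$. This map is $\binom{n}{2}$-to-one onto the set $\N_{n-2}^{\mathrm{pe}}$ of networks on $n-2$ labelled vertices that possess a pole-edge — the $\binom{n}{2}$ accounting for the choice of labels for the poles — and it respects $\Pa$, so the ratio above equals $|\N_{n-2}^{\mathrm{pe},c}|/|\N_{n-2}^{\mathrm{pe}}|$, the superscript $c$ marking the ``bad'' part. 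Finally, adjoining the pole-edge is a bijection between the networks in $\N_{n-2}$ with no pole-edge and those with one, so $|\N_{n-2}^{\mathrm{pe}}|=\tfrac12|\N_{n-2}|$; hence
\[
\frac{|\N_{n-2}^{\mathrm{pe},c}|}{|\N_{n-2}^{\mathrm{pe}}|}\ \le\ \frac{|\N_{n-2}^{\,c}|}{\tfrac12|\N_{n-2}|}\ =\ 2\,\Pr{\Nl_{n-2}\notin\Pa}\ \le\ 2f(n-2),
\]
and combining this with the previous display gives $\Pr{\Bl_n\notin\Pa}\le 2\kappa f(n-2)$.

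The main obstacle is this second step, and in particular the careful handling of the convention ``a network has property $\Pa$''. That convention refers to the graph obtained by \emph{labelling the poles}, which equals the completed biconnected graph $B$ only when the network already contains the pole-edge; for a network without a pole-edge the labelled-pole graph is $B$ with an edge deleted and need bear no relation to $\Pa$. Restricting to networks with a pole-edge repairs this, but then one must count such networks precisely — they are exactly half of all networks — and this is where the factor $2$ in the statement comes from. Everything else (the edge-count inequalities, the relabelling bookkeeping, and the harmless single-edge graph and constant term in~\eqref{eq:decompositionNetworksEdgeRooted2Connected}) is routine.
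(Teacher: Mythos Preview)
Your proof is correct and follows essentially the same route as the paper's: both pass through the edge-rooted class $\vec{\CB}_n^+$ using the decomposition~\eqref{eq:decompositionNetworksEdgeRooted2Connected}, pick up a factor~$\kappa$ from the edge-count inequalities $n\le |E(B)|\le \kappa n$, and a factor~$2$ from restricting to the half of the networks that carry the pole-edge (equivalently, the $e\times\vec{\CB}_n$ half of $(1+e)\times\vec{\CB}_n$). The only cosmetic difference is direction --- you argue from $\CB_n$ towards $\CN_{n-2}$, while the paper starts at $\CN_{n-2}$ and pushes the good set forward --- and your explanation of \emph{why} one must restrict to pole-edge networks (so that ``$N\in\Pa$'' and ``$B\in\Pa$'' coincide) is in fact more explicit than the paper's. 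One minor remark: your ``$\binom{n}{2}$-to-one'' claim is cleanest if you pass through the \emph{oriented} edge-rooted class (equivalently, ordered pole labels), which gives an honest $n(n-1)$-to-one bijection respecting~$\Pa$; the ratio you need then follows immediately without worrying about pole-swapping automorphisms.
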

With the above facts at hand we are ready to define the classes of networks that we will consider. Let $\mathcal{T}$ denote a class of labeled 3-connected graphs that is closed under automorphisms. Following~\cite{Traht, Tutte}, we define a class of networks \emph{with respect to} $\mathcal{T}$, denoted by $\mathcal{N}(\mathcal{T})$, inductively as follows (see also Figures~\ref{fig:network_decomposition} and~\ref{fig:HNEt}): 

\begin{figure}[t]%
\includegraphics[width=13cm]{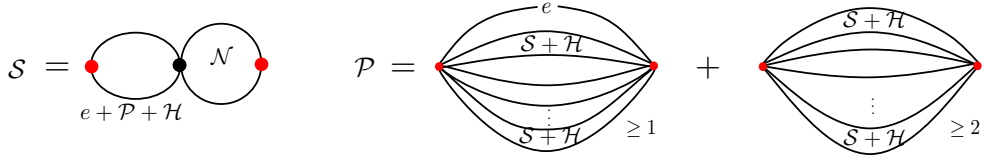}%
\caption{Series and Parallel Networks.}%
\label{fig:network_decomposition}%
\end{figure}

\smallskip

\noindent
A \emph{network} in $\mathcal{N}(\mathcal{T})$ is either an edge, whose endvertices are the poles, or is of type $S$ (\emph{series network}), or of type $P$ (\emph{parallel network}), or of type $H$ (\emph{core network}). 

\smallskip
\noindent
\emph{Series networks}: 
A network of type $S$ consists of two networks $N_1$ and $N_2$, such that the right pole of $N_1$ is identified with the left pole of $N_2$. Here, $N_1$ is restricted to be either an edge, or of type $S$ or $H$, and $N_2\in \mathcal{N}(\mathcal{T})$. 

\smallskip
\noindent
\emph{Parallel networks}:
A network of type $P$ consists either of an edge and a non-empty set of networks, 
either of type $S$ or of type $H$, where 
their right poles (left poles) are identified into a single right pole (left pole), or a set of networks of size at least 2, either of type $S$ or of type $H$ where the identification of the poles is as before.

\smallskip
\noindent
\emph{Core networks:} Let $\bar{\mathcal{T}}$ be the class of networks which are created by taking any graph in $\T$, deleting an edge, and then turning its endvertices into poles. A network of type $H$ consists of a network from $\bar{\mathcal{T}}$, where each edge is replaced by a network whose poles are identified in a unique way with the endvertices of the edges. 

\smallskip
\noindent
In any of the above cases, we do the necessary relabeling in case there are conflicts, when two or more networks are joined through one of the above operations. 
\begin{figure}[b!]%
\includegraphics[width=10cm]{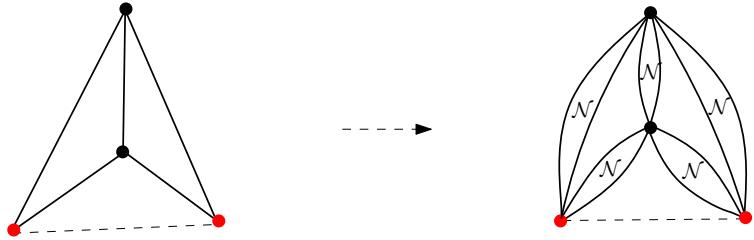}%
\caption{A core network whose core is $K_4$.}%
\label{fig:HNEt}%
\end{figure}
With the above notation at hand, we can now introduce formally the classes of networks that we are interested in.
\begin{definition} \label{Nice}
We say that $\N (\T )$ is $\al$-\emph{nice}, if the following conditions hold.
\begin{enumerate}
\item [(A)] For all $y>0$, $\bar{T}(x,y)$ is analytic in $\mathbb{C}$ or has a unique dominant singularity at $\rho_{\bar T}(y) > 0$, and admits a local singular expansion of the form 
\begin{equation} \label{Ass:SingExp}
\textstyle
\bar{T}(x,y) = \sum\limits_{k \geq 0} t_k(y) \left(1- {x\over \rho_{\bar T}(y)}\right)^{k/m},
\end{equation}
where $m\in\mathbb{N}$, and the $t_k(y)$ are analytic functions. 
Furthermore, there exist $\delta, \eps >0$ such that $\bar{T}(x,y)$ is analytic in a domain
$\Delta = \Delta (\delta, \eps) = \{z \ : \ |z|< \rho_{\bar{T}} (y) + \delta, \arg (z-\rho_{\bar{T}}(y))> \eps \}$.
Moreover, the smallest integer~$k$ such that $t_k(y)\not\equiv 0$ and $k/m \not\in \mathbb{N}$ satisfies $k/m = \alpha$. Also, the function $\rho_{\bar T}(y)$ is strictly decreasing and continuously twice differentiable.
\item[(B)] For $y$ in a neighborhood of 1, let $\rho_N(y)$ be the radius of convergence of the egf enumerating $\N (\T )$ with respect to vertices and edges. Then
\[
\textstyle
	- \frac{\rho_N''(1)}{\rho_N(1)}
	- \frac{\rho_N'(1)}{\rho_N(1)}
	+ \left(\frac{\rho_N'(1)}{\rho_N(1)}\right)^2 \neq 0.
\]
 \end{enumerate}
\end{definition}
The assumptions in the definition above are satisfied by several classes of graphs that have been studied in the literature, e.g.\ by series-parallel, planar or~$K_{3,3}$-minor free graphs. However, we believe that the assumption (B) is redundant, in the sense that it follows from (A), but we are unfortunately unable to show it. The remaining conditions are minimal in the context of analytic combinatorics, as they form the ``backbone'' of essentially any counting result or limit theorem that can be derived.

For notational convenience and in slight abuse of notation we will be writing~$T(x,y)$ for the egf~$\bar{T}(x,y)$. With all the above definitions at hand we are now ready to present our main results.
Given a network~$N$, we denote by~$C_1(N)$ the number of vertices in a largest core of~$N$. If~$\ell$ is a positive integer and~$N$ is a network, we denote by~$c(\ell;\, N)$ the number of cores in~$N$ which have~$\ell$ vertices. More generally, if~$\xi \geq 1$, we denote by~$c(\ell, \xi \ell;\, N)$ the number of cores in~$N$ whose number of vertices is at least ~$\ell$ and no more than~$\xi \ell$. Set
\begin{equation}
\label{eq:Phi}
	\Phi( x, y, z) = T(x,z) - \log\left(\frac{1+z}{1+y}\right) + \frac{xz^2}{1+xz}.
\end{equation}
An important property of this function is that it defines implicitly the egf enumerating networks through the relation~$\Phi(x,y,N(x,y)) = 0$; see Lemma~\ref{lem:egfsNetworks}. Our results imply that the sign of~$\frac{\partial}{\partial z}\Phi(\rho_N, 1, N_0) =: \lambda$ dictates whether a random network has only at most logarithmically-sized cores ($\lambda > 0$), or a giant core that contains a constant fraction of the vertices ($\lambda < 0$). If~$u \in \{x,y,z \}$, we shall denote in the remainder by~$\Phi_u(x,y,z)$ the partial derivative of~$\Phi (x,y,z)$ with respect to~$u$. Moreover, we denote by~$\Nl_n$ a random network from~$\N_n$ throughout and without further reference. In the case~$\lambda > 0$ we show the following. 
\begin{theorem} \label{Main:Subcritical}
Let $\N(\T)$ be an $\alpha$-nice class for some $\alpha\in \mathbb{R}\setminus \{0\}$, and let~$\delta, \eps > 0$. Assume that $\Phi_z(\rho_N, 1 , N_0) > 0$ and set $\tau = {\rho_N\over \rho_T (N_0)}$. Then the following holds a.a.s.
\begin{enumerate}
\item [(i)]  For all $2\leq \ell \leq (1-\delta)\log_{1/\tau} n$ we have $c(\ell;\, \Nl_n) \in (1\pm \eps) c_{\ell} n$, where $c_{\ell}$ is given in Lemma~\ref{lem:counts}.
\item [(ii)] $c\left( (1-\eps)\log_{1/\tau} n,  {5\over 2} \log_{1/\tau} n;\, \Nl_n\right)\leq n^{2\eps}$.
\item[(iii)] The largest core in $\Nl_n$ contains at most $(5/2 +\delta) \log_{1/\tau} n$ vertices.
\end{enumerate}
Moreover, for large $\ell$ the Equation~\eqref{eq:pk_subcritical_asympt} provides an asymptotic expression for $c_\ell$.
\end{theorem}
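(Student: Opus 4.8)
The ``Moreover'' clause is immediate: \eqref{eq:pk_subcritical_asympt} is obtained by applying transfer theorems to the core-network generating function near its singularity, using the singular expansion \eqref{Ass:SingExp} with exponent $\alpha$, and gives $c_\ell=\Theta\big(\tau^{\ell}\ell^{-1-\alpha}\big)$. For parts (i)--(iii) --- which concern $\Nl_n$ directly, so no passage to biconnected graphs is needed --- the plan is to analyse $\Nl_n$ through the recursive Boltzmann samplers of Section~\ref{Sec:Samplers}. Tuned at the singular value $x=\rho_N$, the sampler $\Gamma\N$ for $\N(\T)$ (assembled from the four mutually recursive samplers for edges, series, parallel and core networks) outputs a network whose number of labelled vertices $S$ satisfies $\Pr{S=n}=\rho_N^{\,n}[x^n]N(x,1)/N_0$, and conditioned on $\{S=n\}$ is distributed exactly as $\Nl_n$. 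Since $\Phi_z(\rho_N,1,N_0)=\lambda>0$, the generating function $N(x,1)$ --- defined by $\Phi(x,1,N(x,1))=0$, cf.\ Lemma~\ref{lem:egfsNetworks} --- carries the singular exponent $\alpha$, so $\Pr{S=n}=\Theta(n^{-1-\alpha})$ by the singularity analysis of Section~\ref{sec:singAnalysis}; and the probability that a given core produced during a run of $\Gamma\N(\rho_N)$ has exactly $\ell$ vertices equals $\rho_N^{\,\ell-2}[x^{\ell-2}]T(x,N_0)/T(\rho_N,N_0)=\Theta\big(\tau^{\ell}\ell^{-1-\alpha}\big)$ (coefficient extraction from the core-network generating function, cf.\ Lemma~\ref{lem:counts}). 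Hence it suffices to prove (i)--(iii) for $\Gamma\N(\rho_N)$ conditioned on $\{S=n\}$, and for the one-sided parts (ii), (iii) it even suffices to bound the relevant bad event for the \emph{unconditioned} sampler and then absorb the polynomial factor $1/\Pr{S=n}=O(\mathrm{poly}(n))$ into the geometric gain $\tau^{\ell}$.

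\textbf{First moments, and the bounds (ii) and (iii).} Coupling the per-core probabilities above with the distribution of the number of core-network calls made by $\Gamma\N(\rho_N)$ --- itself read off from the recursive structure of the samplers --- yields $\E{c(\ell;\Nl_n)}\sim c_\ell n$ with $c_\ell$ as in Lemma~\ref{lem:counts}, uniformly over the stated ranges, and summing the geometrically decaying tail, $\sum_{\ell'\ge\ell}\E{c(\ell';\Nl_n)}=O\big(n\tau^{\ell}\ell^{-1-\alpha}\big)$ for all $\ell$. Markov's inequality then gives (iii): $\Pr{\Nl_n\text{ has a core with more than }(5/2+\delta)\log_{1/\tau}n\text{ vertices}}\le\sum_{\ell'>(5/2+\delta)\log_{1/\tau}n}\E{c(\ell';\Nl_n)}=o(1)$, since $\tau^{(5/2+\delta)\log_{1/\tau}n}=n^{-5/2-\delta}$. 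Applied instead to the window $\big[(1-\eps)\log_{1/\tau}n,\tfrac{5}{2}\log_{1/\tau}n\big]$, the same estimate bounds the expected number of such cores by $n^{2\eps}$ for $n$ large, so Markov's inequality yields (ii).

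\textbf{Concentration for (i).} For $2\le\ell\le(1-\delta)\log_{1/\tau}n$ the target $c_\ell n$ is polynomially large but much smaller than $n$, and the plan is to establish $c(\ell;\Nl_n)\in(1\pm\eps)c_\ell n$ a.a.s.\ by a second-moment argument. Running $\Gamma\N(\rho_N)$ and tracking the joint evolution of the pair (number of labelled vertices produced so far, number of $\ell$-cores produced so far), one shows --- this is the method advertised in the introduction --- that they obey a closed system of recursive distributional equations expressed through $T$, $N$ and $\Phi$, from which one extracts that, conditioned on $\{S=n\}$, $\mathrm{Var}\,c(\ell;\Nl_n)=o\big((c_\ell n)^2\big)$ uniformly in $\ell$ over the whole range; the structural reason this holds is that $c(\ell;\Nl_n)$ is an additive functional of the (multi-type) branching process describing the generation tree, whose distinct subtrees are conditionally independent given their roots. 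Chebyshev's inequality --- or the sharper exponential-moment bounds that the same system delivers --- then gives the claim, which together with the first-moment estimate above proves (i).

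\textbf{Main obstacle.} The crux is the concentration step. In the subcritical regime the output size $S$ has only a polynomial tail (exponent $\alpha>0$, possibly less than $1$), so the execution of $\Gamma\N$ is not a bounded-difference process and off-the-shelf Azuma/McDiarmid-type concentration does not apply; moreover one must control how conditioning on $\{S=n\}$ reweights the tree of recursive calls --- this, rather than any cleverness, is why one only obtains the safe threshold $5/2$ in (iii) instead of the heuristic $1$ --- and the variance bound must be uniform over all $\ell\le(1-\delta)\log_{1/\tau}n$. Overcoming this is exactly what the governing equation systems of Section~\ref{Sec:Samplers} are designed for: they must be pushed to yield second- and exponential-moment estimates for the core counts, with the enumerative input of Section~\ref{sec:singAnalysis} supplying the required coefficient asymptotics, and with the conditional independence of distinct subtrees of the generation tree keeping these moment computations tractable.
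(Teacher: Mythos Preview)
Your treatment of (ii) and (iii) via first moments and Markov's inequality is essentially the paper's argument. There is, however, a factual slip in your setup: when $\Phi_z(\rho_N,1,N_0)>0$ we are in the regime $\tau<1$, i.e.\ $\rho_N<\rho_T(N_0)$, so $\Phi$ is still analytic at the singularity of $N$ and that singularity is a \emph{branch point of square-root type} (Lemma~\ref{lem:branchPoint}), not of exponent $\alpha$. Accordingly $\Pr{S=n}$ is governed by the constant $\beta=5/2$ of Lemma~\ref{cor:GammaNIsEfficient}, which is exactly where the $5/2$ in the statement of the theorem comes from. Your argument survives, since you only use that $\Pr{S=n}$ is polynomially bounded below, but the justification you give for that fact is incorrect.

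Your route to (i) differs genuinely from the paper's. You propose a second-moment bound extracted from distributional recursions for the joint evolution of the vertex and $\ell$-core counts in the sampler. The paper instead shows first that the \emph{total} number $A_\CT$ of cores is exponentially concentrated around $a_\CT n$ (Corollary~\ref{cor:a_T}); the key device is that the deterministic bookkeeping identities of Lemma~\ref{lem:relations} yield a nonsingular linear system (Lemma~\ref{lem:system}) tying $A_\Net,A_\Ser,A_\Par,V_\CT,E_\CT$ together, with the only probabilistic inputs being the concentration of the edge count (Theorem~\ref{thm:edges}) and Chernoff bounds for the indicator sums appearing in those identities. Conditioned on $A_\CT\approx a_\CT n$, the quantity $c(\ell;\Nl)$ is then a sum of $A_\CT$ i.i.d.\ indicators with success probability $p_\ell$, and a plain Chernoff bound gives failure probability $e^{-\Omega(\eps^2 p_\ell n)}$ (Lemma~\ref{lem:counts}). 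This sidesteps entirely the heavy-tail obstacle you flag as the crux: the linear-system reduction converts the problem into one about bounded-increment sums, so no delicate variance or higher-moment control on the full branching process is ever needed. Your approach could in principle be pushed through, but it is more laborious and would yield only Chebyshev-strength bounds where the paper obtains exponential ones.
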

Our next theorem deals with the case $\lambda < 0$, where it is shown that the 
corresponding random networks have a.a.s.\ a unique giant core.
\begin{theorem} \label{Main:Critical} 
Let $\N(\T)$ be an $\alpha$-nice class for some $\alpha\in \mathbb{R}\setminus \{0\}$, and let~$\delta, \eps > 0$. Assume that $\Phi_z(\rho_N, 1 , N_0) < 0$. 
Let $\omega(n)$ be a function such that $\omega (n) \rightarrow \infty$ as $n \rightarrow \infty$.
Then the following holds a.a.s.
\begin{enumerate} 
\item [(i)] Let $\gamma_{T}>0$ be as in Lemma~\ref{lem:uniquelarge}. Then $C_1 (\Nl_n) \in (1\pm \eps)\gamma_T n$. 
\item [(ii)] For all $\left(n \omega(n) \right)^{1/\alpha}\leq \ell < C_1(\Nl_n)$ we have $c(\ell;\, \Nl_n) = 0$.
\item [(iii)] For all $2\leq \ell  \leq \left( {n \over \omega(n) \log n}\right)^{1 /( \alpha +1)}$ we have $c(\ell;\, \Nl_n) \in (1\pm \eps) c_{\ell} n$, where~$c_{\ell}$ is given in Lemma~\ref{lem:counts}.
\item [(iv)] Let~$\xi \geq 1$. If~$2\leq \ell \leq  \left( {n \over \omega(n) \log n}\right)^{1/\alpha}$, then~$c(\ell, \xi \ell;\, \Nl_n) \in (1\pm \eps) c_{\ell,\xi}  n$, where~$c_{\ell,\xi}$ is given in Lemma~\ref{eq:p_kxik}.
\end{enumerate}
Moreover, for large $\ell$ the Equations~\eqref{eq:pk_asympototic},~\eqref{eq:pkxik} provide asymptotic expressions for $c_\ell$ and~$c_{\ell, \xi\ell}$.
\end{theorem}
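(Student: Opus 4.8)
\medskip
\noindent
The plan is to prove Theorem~\ref{Main:Critical} in Section~\ref{Sec:Proofs} on the basis of two ingredients prepared beforehand: the Boltzmann samplers for $\N(\T)$ of Section~\ref{Sec:Samplers}, and the singularity analysis of the egf $N(x,y)$, which by Lemma~\ref{lem:egfsNetworks} is defined implicitly through $\Phi(x,y,N(x,y))=0$ with $\Phi$ as in~\eqref{eq:Phi}, carried out in Section~\ref{sec:singAnalysis}. First I would invoke the standard property that a Boltzmann sampler for $\N(\T)$ tuned so that its output carries $\Theta(n)$ labeled vertices is, conditioned on having exactly $n$ labeled vertices, uniform on $\N_n$; the central limit theorem and the tail estimates for the number of edges and vertices established in Section~\ref{sec:singAnalysis} show that this conditioning costs only a polynomial factor, so it suffices to prove all four statements for the conditioned output. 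The hypothesis $\Phi_z(\rho_N,1,N_0)=\lambda<0$ is the \emph{supercriticality} condition for the composition scheme encoded by $\Phi$: informally, a single core can absorb a linear number of vertices, and the $\alpha$-type local expansion~\eqref{Ass:SingExp} of $\bar T$ then governs its size.

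For parts (iii) and (iv) I would first compute the relevant expectations by a marking argument: introduce an auxiliary variable $u$ marking the cores of size $\ell$ in the recursive decomposition, derive the modified implicit equation satisfied by $N(x,y,u)$, differentiate at $u=1$, and extract coefficients. The marking contributes the number of graphs of $\T$ on $\ell$ vertices with one edge removed and its endpoints turned into poles, every other edge of which is substituted by an arbitrary network, reinserted into an ambient network; applying the transfer theorems of Section~\ref{sec:singAnalysis} to the resulting singular expansion yields $\E{c(\ell;\,\Nl_n)}=(1\pm o(1))\,c_\ell n$ with $c_\ell$ as in Lemma~\ref{lem:counts}, and summing over $[\ell,\xi\ell]$ gives the analogue with $c_{\ell,\xi}$ as in Lemma~\ref{eq:p_kxik}; extracting the leading order for large $\ell$ produces~\eqref{eq:pk_asympototic} and~\eqref{eq:pkxik}. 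Supercriticality makes the exponent $\alpha$ of~\eqref{Ass:SingExp} surface as a power law $c_\ell=\Theta(\ell^{-\alpha-1})$, which is exactly why the ranges in (iii) and (iv) are as stated: there $\E{c(\ell;\,\Nl_n)}$, resp.\ $\E{c(\ell,\xi\ell;\,\Nl_n)}$, remains polynomially large. Concentration then follows from the second moment method: the egf of networks with an \emph{ordered pair} of distinguished cores of size $\ell$ almost factorises, so the variance of $c(\ell;\,\Nl_n)$ is $o(\E{c(\ell;\,\Nl_n)}^2)+O(\E{c(\ell;\,\Nl_n)})$, and Chebyshev's inequality together with a union bound over the $O(n^{1/(\alpha+1)})$, resp.\ $O(n^{1/\alpha})$, relevant values of $\ell$ --- which the $\omega(n)\log n$ slack in the range absorbs --- gives (iii) and (iv).

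For parts (i) and (ii) I would analyse the largest core directly. The number of networks on $n$ vertices whose largest core has exactly $m$ vertices has the shape $\exp(n\,g(m/n)+o(n))$ for an explicit function $g$ built from $\rho_{\bar T}$, $\rho_N$ and the local data of $\bar T$ and $N$; the constant $\gamma_T>0$ of Lemma~\ref{lem:uniquelarge} is its unique maximiser on $(0,1)$, equivalently the value at which the top-level core component of the sampler concentrates once the total size is $n$. A Laplace-type estimate (or, equivalently, the single-large-jump behaviour of the sampler's output size, using that the $\bar T$-component is then run at its dominant singularity) shows that $|\N_n|$ is dominated, up to subexponential factors, by networks whose largest core has $(1\pm\eps)\gamma_T n$ vertices, giving (i); uniqueness follows because a configuration with two cores each of size at least $\eps n$ corresponds to evaluating a ``split'' version of $g$ whose exponential rate is strictly below $g(\gamma_T)$. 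For (ii), the same estimate used as a first-moment upper bound, valid for all $\ell$ up to just below $\gamma_T n$, gives $\sum_{\ell\ge(n\omega(n))^{1/\alpha}}\E{c(\ell;\,\Nl_n)}=O(1/\omega(n))\to 0$ once the sizes within $(1\pm\eps)\gamma_T n$ are removed using the concentration from (i), and Markov's inequality finishes the argument.

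I expect the main difficulty to be twofold. First, the singularity analysis must be pushed through for the implicitly defined, two-variable egf $N(x,y)$ carrying the non-generic $\alpha$-type expansion of assumption~(A), and --- crucially --- with enough uniformity in $\ell$ that the expectation estimates survive across the full polynomial range required by the union bounds; this is the content of Section~\ref{sec:singAnalysis}, and it also underlies the asymptotic formulae~\eqref{eq:pk_asympototic} and~\eqref{eq:pkxik}. Second, the samplers for $\N(\T)$ consist of four mutually recursive randomized procedures (edge, series, parallel and core) calling one another in an a priori unpredictable, nested fashion, so the quantities $c(\ell;\cdot)$ and the vertex and edge counts cannot be tracked by a single branching process; controlling them needs the systems of equations linking the simultaneous evolution of these variables developed in Section~\ref{Sec:Samplers}, and the most delicate point there is verifying that in the second-moment computation the interaction between two marked cores --- which may be nested inside one another, in series, or in parallel --- is genuinely of lower order.
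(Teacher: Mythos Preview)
Your outline diverges from the paper's argument in a way that introduces a real gap for parts (iii) and (iv). You propose to compute $\E{c(\ell;\,\Nl_n)}$ by marking and then to obtain concentration via the second moment method and Chebyshev, absorbing the union bound over $\ell$ into the $\omega(n)\log n$ slack. But Chebyshev only yields a failure probability of order $1/(\eps^2 c_\ell n)$ for each $\ell$, and at the top of the range $\ell\sim\bigl(n/(\omega(n)\log n)\bigr)^{1/(\alpha+1)}$ one has $c_\ell n=\Theta(\omega(n)\log n)$. Summing $\sum_{\ell\le n_-}1/(c_\ell n)\asymp n^{-1}\sum_{\ell\le n_-}\ell^{\alpha+1}\asymp n_-^{\alpha+2}/n\asymp n^{1/(\alpha+1)}/(\omega(n)\log n)^{(\alpha+2)/(\alpha+1)}$ diverges, so the union bound fails. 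The paper avoids this entirely: rather than computing moments analytically, it uses that under the Boltzmann model the cores are sampled \emph{independently} from $\Gamma T(\rho_N,N_0)$, so once $A_\CT$ is pinned down (Corollary~\ref{cor:a_T}, itself proved via the system of equations in Lemma~\ref{lem:system}), $c(k;\Nl)$ is a binomial-type sum of i.i.d.\ indicators (Lemma~\ref{lem:countk}) and Chernoff gives failure probability $e^{-\Theta(\eps^2 p_k n)}=n^{-\Theta(\omega(n))}$, which \emph{is} summable over the polynomial range. The conditioning cost is only $O(n^\beta)$ by Lemma~\ref{cor:GammaNIsEfficient}, not merely polynomial as you say, and this is what makes the argument close.

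For (i) and (ii) your Laplace/large-deviation sketch is also not how the paper proceeds, and it is not clear it can be made to work at the claimed precision: the relevant probabilities are not of the form $e^{ng(m/n)+o(n)}$ but are governed by polynomial factors $n^{-\alpha-1}$ coming from the singular exponent, so an exponential-scale variational argument says nothing. The paper instead establishes uniqueness of a core of size $\ge \omega(n)n^{1/\alpha}$ by a direct counting argument --- any network with two such cores can be split at a cut-edge or at the poles into two networks of sizes $s$ and $n-s$, and summing $\binom{n}{s}|\N_s||\N_{n-s}|$ using the polynomial asymptotics~\eqref{eq:NetsCount} gives $o(|\N_n|)$ --- and then obtains the size of this unique giant core by \emph{subtraction}: Lemma~\ref{lem:system} pins the total number $V_\CT$ of labeled vertices in all cores to $(1\pm\eps)v_\CT n$, and Lemmas~\ref{lem:counts} and~\ref{lem:countsMany} show that the contribution of cores below $\omega(n)n^{1/\alpha}$ is $(1\pm\eps)a_\CT\rho_N T_x(\rho_N,N_0)/T(\rho_N,N_0)\cdot n$; the difference is $\gamma_T n$. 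So $\gamma_T$ is not the maximiser of a rate function but the explicit residual $v_\CT-a_\CT\rho_N T_x(\rho_N,N_0)/T(\rho_N,N_0)$.
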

Recently, Gim\'enez, Noy and Ru\'e~\cite{GimNoy} obtained 
independently results regarding the asymptotic distribution of the largest core in a random planar graph. 
Their methods are completely different and are based on techniques that are widely used in analytic combinatorics.
However, in the present work, we obtain a fairly precise picture of the typical structure of a random biconnected graph whose 3-connected 
cores are taken from various general families, and we perform a very precise census of the smaller cores in all possible analytic regimes.

Apart from the structural results concerning the sizes of the cores  of~$\Nl_n$, we also obtain a limit law for the number of edges of~$\Nl_n$. 
\begin{theorem} \label{thm:edges}
Let~$\N(\T)$ be an~$\alpha$-nice class of networks with respect to a class~$\T$. Let~$\Nl_n$ be a random network with~$n$ vertices. Then~$e(\Nl_n)$ satisfies a central limit theorem, where the expected value is $-\frac{\rho_N'(1)}{\rho_N(1)}n + o(n)$, and the variance is proportional to~$n$. Moreover, there exists~$\eps_0>0$ and a~$C>0$ such that for all~$0\le \eps < \eps_0$ we have 
$$\prob \left(\left|e(\Nl_n) - \ex (e(\Nl_n))  \right|> \eps \ex (e(\Nl_n))\right) \leq \exp \left(- C\eps^2 n \right).$$
\end{theorem}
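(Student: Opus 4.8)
The plan is to derive the theorem from the singularity analysis of the bivariate generating function $N(x,y)$ (where $x$ marks labelled vertices and $y$ marks edges) developed in Section~\ref{sec:singAnalysis}, together with the quasi-powers framework. Write $p_n(y) := \E{y^{e(\Nl_n)}}$ for the probability generating function of $e(\Nl_n)$. Since $\Nl_n$ is uniform on $\N_n$,
\[
p_n(y) \;=\; \frac{[x^n]\,N(x,y)}{[x^n]\,N(x,1)},
\]
so everything reduces to coefficient asymptotics of $N(x,y)$ that are uniform for $y$ in a fixed neighbourhood of $1$ --- complex, for the central limit theorem, and real, for the tail bound.

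The core step is the uniform estimate: there are a constant $\beta$ and functions $\rho_N(y)$, $c(y)$, with $\rho_N$ twice continuously differentiable and $c$ analytic and non-vanishing near $y=1$, such that
\[
[x^n]\,N(x,y) \;=\; c(y)\, n^{-\beta}\, \rho_N(y)^{-n}\,\bigl(1 + o(1)\bigr)
\]
uniformly for $y$ in a small enough neighbourhood of $1$. One obtains this from the defining relation $\Phi\bigl(x,y,N(x,y)\bigr) = 0$ of Lemma~\ref{lem:egfsNetworks} together with condition~(A) of Definition~\ref{Nice}: the local singular expansion $T(x,z) = \sum_{k\ge 0} t_k(z)\bigl(1 - x/\rho_T(z)\bigr)^{k/m}$, the analyticity of the $t_k$, the $\Delta$-analyticity, and the smoothness of $\rho_T$. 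By the implicit function theorem applied to $\Phi$, the dominant singularity $\rho_N(y)$ of $x \mapsto N(x,y)$ depends smoothly on $y$, and $N(x,y)$ has near $x = \rho_N(y)$ a singular expansion whose exponents are inherited either from those of $T$ (leading non-integer exponent $\al$) or from a branch point of $\Phi$ (exponent $1/2$); the sign of $\Phi_z(\rho_N,1,N_0)$ decides which regime occurs, but in either case the nature of the singularity --- and hence the exponent $\beta$ --- is constant throughout a small enough neighbourhood of $1$ by continuity. One also needs $\rho_N(y)$ to be the unique dominant singularity on its circle of convergence, which follows from aperiodicity of the network class; the transfer theorem then gives the displayed estimate, uniformly in $y$. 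This is exactly what the analytic tools of Section~\ref{sec:singAnalysis} are designed to deliver.

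Granting this, for $y$ near $1$ we get $p_n(y) = A(y)\,B(y)^n(1+o(1))$ with $A(y) := c(y)/c(1)$, $B(y) := \rho_N(1)/\rho_N(y)$ and $A(1) = B(1) = 1$, the factor $n^{-\beta}$ cancelling between numerator and denominator and the error being uniform. The quasi-powers theorem (see \cite{FlajSed}) then yields a central limit theorem for $e(\Nl_n)$ with mean $\mu n + o(n)$ and variance $\sigma^2 n + o(n)$, where $\mu = B'(1)/B(1) = -\rho_N'(1)/\rho_N(1)$ --- matching the asserted expectation --- and
\[
\sigma^2 \;=\; \frac{B''(1)}{B(1)} + \frac{B'(1)}{B(1)} - \left(\frac{B'(1)}{B(1)}\right)^{2} \;=\; -\frac{\rho_N''(1)}{\rho_N(1)} - \frac{\rho_N'(1)}{\rho_N(1)} + \left(\frac{\rho_N'(1)}{\rho_N(1)}\right)^{2},
\]
the last equality being a direct computation of $(\log B)'(1)$ and $(\log B)''(1)$ from $B(y) = \rho_N(1)/\rho_N(y)$. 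Condition~(B) of $\al$-niceness is precisely the statement $\sigma^2 \ne 0$, so the limit law is non-degenerate and the variance is of exact order $n$. For the tail bound, the same uniform estimate gives $p_n(y) \le 2A(y)\,B(y)^n$ for real $y$ in a fixed interval around $1$ and all $n$ large; applying Markov's inequality to $y^{e(\Nl_n)}$ when $y > 1$ and to $y^{-e(\Nl_n)}$ when $y < 1$, with $y = e^{t}$ and $|t| = \Theta(\eps)$ suitably small, and using $\log B(e^{t}) = \mu t + O(t^{2})$, a standard Chernoff optimisation yields $\prob\bigl(|e(\Nl_n) - \E{e(\Nl_n)}| > \eps\,\E{e(\Nl_n)}\bigr) \le \exp(-C\eps^{2} n)$ for all small enough $\eps$.

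The step I expect to be the main obstacle is the uniform singularity analysis above. The trouble is that $N(x,y)$ is known only implicitly through $\Phi$, which is itself built from the transcendental function $T$ whose dominant singularity is of the non-standard type $\bigl(1 - x/\rho_T(y)\bigr)^{k/m}$; one must therefore control a composite singularity --- a singularity of $T$ evaluated at the moving argument $N(x,y)$ --- and show that its location $\rho_N(y)$, its qualitative type, and the induced subexponential exponent all vary continuously (with $\rho_N$ in fact $C^2$) as the secondary parameter $y$ ranges over a neighbourhood of $1$, uniformly enough to justify the transfer theorem. Once these uniform coefficient asymptotics are established, the passage to both the central limit theorem and the large-deviation bound is routine.
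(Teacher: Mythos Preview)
Your proposal is correct and follows essentially the same route as the paper: obtain a uniform local singular expansion of $N(x,y)$ near $x=\rho_N(y)$ from the implicit relation $\Phi(x,y,N)=0$ (this is precisely what Lemmas~\ref{lem:branchPoint} and~\ref{lem:PhiSingular} provide, covering the two regimes $r=1/2$ and $r=\alpha$), apply the Transfer Theorem to get $p_n(y)\sim A(y)B(y)^n$, and then invoke the Quasi-Powers Theorem together with condition~(B) for the CLT and a Chernoff argument for the tail bound. Your write-up is in fact more explicit than the paper's terse proof, especially regarding the computation of $\sigma^2$ and the separate treatment of the large-deviation estimate.
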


\section{Preliminaries}
\subsection{Boltzmann Sampling}
\label{ssec:boltzmannSampling}
The above decomposition of networks can be expressed in terms of certain combinatorial operators, which are omnipresent in modern theories of enumeration as well as random generation of combinatorial structures. We refer the reader to the book by Flajolet and Sedgewick~\cite{FlajSed} for a detailed exposition of these techniques.

Duchon et al.~\cite{Duch} have shown that the above operations can be used to design \emph{Boltzmann samplers}. If $\gamma$ is a graph, we denote by $e(\gamma)$ and $v(\gamma)$ the number of edges and the number of labeled vertices of $\gamma$, respectively. A Boltzmann sampler for $\G$ is a randomized algorithm  that generates any $\gamma \in \G$ with probability 
${1\over G(x,y)}~{x^{v(\gamma)} y^{e(\gamma)}\over v(\gamma)!}$.
Note that if we set $y=1$ and we condition on the order of the output graph being $n$, then the distribution of the output graphs is the uniform distribution on the set of graphs of order $n$. 

\subsection{Networks}
The decomposition of networks in Section~\ref{Sec:Nets} implies directly the following statement.
\begin{lemma}
\label{lem:decomposition}
Let $e$ denote the class of networks that consist of precisely one edge and $\mathcal{X}$ denote the class which consists of the empty graph 
with one labeled vertex. Moreover, let~$\CN$ be an $\alpha$-nice class of networks with 3-cores from the class $\CT$. Then the classes~$\CN, \CS, \CP, \CH$ satisfy the following relations.
\[
\begin{split}
	\CN & = e + \CS + \CP + \CH, \\
	\CS & = (e + \CP + \CH) \times \mathcal{X} \times \CN, \\
	\CP & = e\times\Set_{\ge 1}(\CS + \CH) + \Set_{\ge 2}(\CS + \CH),\\
	\CH & = \CT \circ_e \CN.
\end{split}
\]
\end{lemma}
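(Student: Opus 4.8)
The plan is to verify each of the four identities in Lemma~\ref{lem:decomposition} as a direct consequence of the inductive definition of $\mathcal{N}(\mathcal{T})$ given in Section~\ref{Sec:Nets}. Recall that the relations asserted are isomorphisms of combinatorial classes in the sense explained in the notational preliminaries: each ``$=$'' records the existence of a size-preserving (with respect to both labeled vertices and edges) bijection, nothing more. So for each line I would exhibit such a bijection, or rather point to the clause of the definition that already provides it.

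\emph{First identity} ($\CN = e + \CS + \CP + \CH$). This is immediate from the very first sentence of the definition: a network in $\mathcal{N}(\mathcal{T})$ is either an edge, or of type $S$, $P$, or $H$. The only thing to check is that these four possibilities are \emph{mutually exclusive}, so that the disjoint-union operator $+$ is appropriate and no network is double-counted. This follows from the standard uniqueness of the SPQR-type decomposition (Trakhtenbrot~\cite{Traht}, Tutte~\cite{Tutte}): the type of a network is determined by the structure of the ``top'' split of the associated biconnected graph, and a series network, a parallel network, and a core network have, respectively, a cut pair splitting it into a path of two pieces, into three or more parallel pieces (or an edge plus pieces), and a 3-connected top block. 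The restriction imposed on $N_1$ in the series case (it is an edge, or of type $S$ or $H$, but never of type $P$) and on the pieces in the parallel case (each is of type $S$ or $H$, never $P$ or a bare edge) is exactly what makes the decomposition unambiguous, and hence makes the union disjoint; this is the standard device for avoiding associativity ambiguities in series composition and commutativity ambiguities in parallel composition.

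\emph{Remaining three identities.} For $\CS = (e + \CP + \CH) \times \mathcal{X} \times \CN$: a series network is an ordered pair $(N_1, N_2)$ with the right pole of $N_1$ glued to the left pole of $N_2$; by the restriction $N_1 \in e + \CP + \CH$ and $N_2 \in \CN$, and the shared pole becomes, after relabeling, an ordinary labeled vertex of the resulting network, which is precisely the factor $\mathcal{X}$. (One must note that the two poles of $N_1$ and of $N_2$ contribute no labeled vertices, since poles are unlabeled by convention, so the $x$-count of the product matches the $x$-count of the glued network, with the one identified pole now being labeled.) For $\CP = e\times\Set_{\ge 1}(\CS + \CH) + \Set_{\ge 2}(\CS + \CH)$: a parallel network is either an edge together with a nonempty unordered set of networks of type $S$ or $H$, all sharing the two poles, or such an unordered set of size at least $2$ with no edge; the two summands are disjoint (one has a pole-to-pole edge present, the other does not), and the unordered gluing of the pieces at the common pair of poles is exactly the $\Set$ operator, with no labeled vertex created in the merge. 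For $\CH = \CT \circ_e \CN$: by definition $\bar{\mathcal{T}}$ (written $\mathcal{T}$ by the stated abuse of notation) is the class obtained from graphs in $\mathcal{T}$ by deleting an edge and turning its endpoints into poles, and a core network is obtained from an element of $\bar{\mathcal{T}}$ by substituting a network for each edge — this is verbatim the substitution operator $\circ_e$, and the identification of the poles of each substituted network with the endpoints of the corresponding edge is the ``unique way'' referenced in both the definition and the description of $\circ_e$.

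The only genuine subtlety — and the step I would spend the most care on — is bookkeeping of labels and of the distinction between poles and labeled vertices under the gluing operations, so that the identities hold \emph{bivariately} (in $x$ and $y$), not merely at the level of underlying unlabeled graphs. Concretely: in the series composition the merged pole must be accounted as a new labeled vertex (hence the explicit $\mathcal{X}$ factor), whereas in the parallel composition and in the core substitution the merged poles stay poles and contribute nothing to $x$; and in every case the relabeling step (mentioned at the end of the definition and built into the definitions of $\times$, $\Set$, $\circ_e$) must be invoked to resolve label conflicts without changing any counts. Once one is careful that the operators $\times, +, \Set_{\ge k}, \circ_e$ as defined in the notational preliminaries are precisely the combinatorial gluing rules in the definition of $\mathcal{N}(\mathcal{T})$, all four lines follow by inspection, and no analytic input or property of $\alpha$-niceness is needed — indeed the hypothesis that $\CN$ is $\alpha$-nice is not used in the proof, only the recursive structure of $\mathcal{N}(\mathcal{T})$.
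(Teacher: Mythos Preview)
Your proposal is correct and follows exactly the paper's approach: the paper itself offers no proof beyond the sentence ``The decomposition of networks in Section~\ref{Sec:Nets} implies directly the following statement,'' and you have simply unpacked that implication carefully. One small point: you quote the definition's restriction on $N_1$ as ``an edge, or of type $S$ or $H$'' but then (correctly) use $N_1 \in e + \CP + \CH$ in matching the second identity --- this discrepancy is a typo in the paper's prose definition (the generating-function relation $S = (y+P+H)xN$ in Lemma~\ref{lem:egfsNetworks} confirms the intended restriction is $e + \CP + \CH$), so your reading is the right one.
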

Using this fact we immediately obtain the following relation for the corresponding exponential generating functions. This result was shown, among others, in~\cite{Traht, Walsh}.
\begin{lemma}
\label{lem:egfsNetworks}
The exponential generating functions enumerating networks satisfy
\begin{equation} 
\begin{split}
N(x,y) &= y + S(x,y) + P(x,y) + H(x,y), \\
S(x,y) &= (y + P(x,y) + H(x,y))xN(x,y), \\ 
P(x,y) &= (1+y)\left(e^{S(x,y) + H(x,y)}-1\right) -S(x,y) - H(x,y),\\
\label{eq:4th} H(x,y) &= T(x,N(x,y)).
\end{split}
\end{equation} 
Moreover, $N(x,y)$ satisfies the equation $\Phi(x, y, N(x,y)) = 0$, where $\Phi$ is as in~\eqref{eq:Phi}.
\end{lemma}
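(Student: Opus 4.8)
The plan is to translate each structural identity of Lemma~\ref{lem:decomposition} into an equation between the corresponding exponential generating functions, using the standard dictionary between the combinatorial operators and operations on egf's (see~\cite{FlajSed,Duch}). First I would recall that, since in our convention $x$ marks \emph{labeled} vertices (the poles are not counted) and $y$ marks edges, the class $e$ of a single edge has egf equal to $y$, and $\mathcal{X}$ has egf $x$. The disjoint union $\CN = e + \CS + \CP + \CH$ then immediately yields $N = y + S + P + H$. For the series relation $\CS = (e+\CP+\CH)\times\mathcal{X}\times\CN$, the cartesian product (with relabeling) corresponds to multiplication of egf's, and the extra factor $\mathcal{X}$ accounts for the vertex created by identifying the right pole of $N_1$ with the left pole of $N_2$; hence $S = (y+P+H)\,x\,N$. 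For the parallel relation, I would use that $\Set_{\ge k}(\X)$ has egf $\exp(X)-\sum_{j<k} X^j/j!$; thus $\Set_{\ge 1}(S+H)$ contributes $e^{S+H}-1$ and $\Set_{\ge 2}(S+H)$ contributes $e^{S+H}-1-(S+H)$. Combining the two summands of $\CP$ (the one containing an edge gets an extra factor $y$) gives $P = y(e^{S+H}-1) + e^{S+H}-1-(S+H) = (1+y)(e^{S+H}-1) - S - H$, as claimed. Finally, the substitution $\CH = \CT\circ_e\CN$ replaces every edge of a rooted $3$-connected graph by a network; under our variable conventions this is exactly the substitution of $N(x,y)$ into the edge-variable of $T$, i.e.\ $H(x,y) = T(x,N(x,y))$, where $\bar T$ is written as $T$ per the convention fixed just before the statement.

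It remains to verify the closed-form relation $\Phi(x,y,N(x,y))=0$ with $\Phi$ as in~\eqref{eq:Phi}. Here I would eliminate $S$, $P$ and $H$ from the four equations in favour of $N$. From the first equation, $S+P+H = N - y$, so $e^{S+H} = \exp(N - y - P)$; combined with the parallel equation one solves for $P$. A cleaner route: add $y + H$ to both sides of the parallel equation to get $y + P + H = (1+y)(e^{S+H}-1) + y = (1+y)e^{S+H} - 1$, whence $y + P + H = (1+y)e^{S+H}-1$. Substituting this into the series equation gives $S = \big((1+y)e^{S+H}-1\big)xN$, and using $S = N - y - P - H = N - \big((1+y)e^{S+H}-1\big) = N + 1 - (1+y)e^{S+H}$ lets one isolate $e^{S+H}$: writing $Q := (1+y)e^{S+H}$ we obtain $N+1-Q = (Q-1)xN$, so $Q = \frac{(N+1)(1+xN)}{1+xN+xN} $... more precisely $Q(1+xN) = (1+xN)N + 1$, giving $Q = N + \frac{1}{1+xN}$. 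Therefore $e^{S+H} = \frac{1}{1+y}\left(N + \frac{1}{1+xN}\right)$, i.e.\ $S + H = \log\!\left(N + \frac{1}{1+xN}\right) - \log(1+y)$. On the other hand $H = T(x,N)$, so $S = \log\!\left(N+\frac{1}{1+xN}\right) - \log(1+y) - T(x,N)$. I would then substitute back into one of the original equations — most economically, compute $y+P+H = Q-1$ and plug into $S = (y+P+H)xN$ — to get a single equation in $N$; rearranging and using $\log\!\left(N+\frac1{1+xN}\right) = \log(1+xN+xN\cdot\text{...})$ together with the algebraic simplification $N + \frac{1}{1+xN} - \frac{xN^2}{1+xN}$ brings everything into the form $T(x,N) - \log\frac{1+N}{1+y} + \frac{xN^2}{1+xN} = 0$, which is precisely $\Phi(x,y,N)=0$.

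The only genuinely delicate point is the bookkeeping in the last paragraph: the several definitions of $\Set_{\ge k}$, the $(1+y)$ factors distinguishing parallel networks that do and do not contain the base edge, and the precise role of the extra vertex in the series product all have to be tracked carefully, and a sign or an off-by-one in the $\log(1+y)$ versus $\log(1+N)$ terms would invalidate the identification with~\eqref{eq:Phi}. I would therefore carry out the elimination symbolically and double-check it by verifying the first few Taylor coefficients of $N(x,1)$ against a direct enumeration of small networks. Everything else is a routine application of the symbolic method, and the passage from Lemma~\ref{lem:decomposition} to the system of egf equations is immediate; the reference to~\cite{Traht,Walsh} confirms that the resulting system (and the relation $\Phi(x,y,N)=0$) is the classical one.
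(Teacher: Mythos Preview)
Your approach is correct and is exactly what the paper intends: it states the system as an immediate consequence of Lemma~\ref{lem:decomposition} via the symbolic-method dictionary and cites~\cite{Traht,Walsh} for the relation $\Phi(x,y,N)=0$, without spelling out the elimination. So there is nothing to compare methodologically.

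That said, the algebra in your second paragraph contains slips. When you ``add $y+H$ to both sides of the parallel equation'' you silently drop a $-S$: from $P=(1+y)(e^{S+H}-1)-S-H$ one gets $y+P+H=(1+y)e^{S+H}-1-S$, not $(1+y)e^{S+H}-1$. Consequently your expression $Q=N+\tfrac{1}{1+xN}$ is wrong (in fact $Q=(1+y)e^{S+H}=N+1$). The clean elimination is shorter than what you wrote: from the first and third equations, $S+P+H=N-y=(1+y)(e^{S+H}-1)$, hence
\[
e^{S+H}=\frac{1+N}{1+y}.
\]
From the first and second equations, $y+P+H=N-S$, so $S=(N-S)xN$, i.e.
\[
S=\frac{xN^{2}}{1+xN}.
\]
Since $H=T(x,N)$, equating $S+H=\log\frac{1+N}{1+y}$ with $\frac{xN^{2}}{1+xN}+T(x,N)$ gives
\[
T(x,N)-\log\frac{1+N}{1+y}+\frac{xN^{2}}{1+xN}=0,
\]
which is $\Phi(x,y,N)=0$. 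Replace your $Q$-computation with these three lines and the proof is complete.
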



\section{Singularity Analysis for Networks}
\label{sec:singAnalysis}

A typical situation that is commonly encountered in modern scenarios of asymptotic enumeration is the following (see also~\cite{FlajSed}): we want to determine the precise asymptotic behavior of the coefficients of an unknown generating function $F(x)$, which is given implicitly by some functional equation $G(x, F(x)) = 0$. Of course, only in very seldom cases it is possible to obtain $F(x)$ explicitly, and generally, we have to resort to a completely different ``program'' to extract enough information from the given functional equation, so as to be able to understand the behavior of~$[x^n]F(x)$.

The fundamental principle behind this program is that the knowledge of the behavior of an analytic function in the vicinity of its dominant singularities gives us very precise information about the asymptotics of its coefficients. So, we have to accomplish two tasks: first, to determine the dominant singularities of $F(x)$, and second, to find the asymptotic expansion of our function near those singular points. The main objective of this section is to accomplish this program for the classes of $\alpha$-nice networks. 




~\\
According to Lemma~\ref{lem:egfsNetworks} the function~$N(x,y)$ is given by the solution of~$\Phi(x,y,N(x,y)) = 0$, where~$\Phi$ is as in~\eqref{eq:Phi}. Let~$y>0$ be any fixed value. As~$N(x,y)$ has only non-negative coefficients, by applying Pringsheim's Theorem (see e.g.\ \cite{FlajSed}), we infer that if the radius of convergence of~$N(x,y)$ is~$\rho_N(y)$, then~$\rho_N(y)$ is a singularity of~$N(x,y)$. 

Note that~$\Phi(x,y,z)$ is not analytic at points that satisfy~$x > \rho_T(z)$, as~$T(x,z)$ ceases to be analytic there. Hence, we always have that~$\rho_N(y) \le \rho_T( N_0(y))$. Moreover, suppose that for some pair $(x',z')$ such that $x' < \rho_T(z')$ we have $\Phi(x',y,z') = 0$ and simultaneously $\Phi_z(x',y,z') \neq 0$. Then, by the Implicit Function Theorem, see e.g.\ \cite{FlajSed}, $\Phi$ is locally invertible, which means that there exists an analytic continuation of $N(x,y)$ around $x = x'$, and $N(x',y) = z'$. We can therefore obtain information about~$\rho$ by studying the distribution of the zeros of~$\Phi_z(x,y,z)$, i.e., the points around which~$\Phi$ fails to be invertible.

It turns out that the precise localization and nature of the singularity of~$N(x,y)$ is dictated by one of the following analytic conditions:
\begin{itemize}
	\item \emph{Subcritical case}: We have~$\Phi_z(\rho_N(y), y, N_0(y)) = 0$. In this case,~$\Phi$ has a \emph{branch point} 
at~$(\rho_N(y), y, N_0(y))$. Moreover, if~$\Phi$ was analytic at~$(\rho_N(y), y, N_0(y))$, then it readily follows 
that~$\rho_N(y) < \rho_T( N_0(y))$.	We further show in Lemma~\ref{lem:branchPoint} that under certain mild assumptions the singularity of
$N(x,y)$ is of square-root type.
	\item \emph{Supercritical case}: We have~$\Phi_z(\rho_N(y), y, N_0(y)) \neq 0$. Here, the function~$\Phi$ is not analytic at~$(\rho_N(y), y, N_0(y))$, and inevitably we have that~$\rho_N(y) = \rho_T( N_0(y))$. In this setting, we determine the conditions 
(see Theorem~\ref{thm:singTransferGeneral} for a precise statement) under which the singularity type of some implicitly given function is ``inherited'' from the singularity of the implicit function. 
\end{itemize}
The following lemma will become very handy when we study the set of zeros of $\Phi_z$.
\begin{proposition}
\label{prop:PhiPositive}
Let $y>0$ and suppose that a dominant singularity of $N(x,y)$ is given by~$\rho_N(y)$. Let $\rho_\theta = \rho_N(y)e^{i\theta}$. Then, for any $0 < \theta < 2\pi$
\[
	|\Phi_z(\rho_\theta, y, N(\rho_\theta, y))| > -\Phi_z(\rho_0, y, N(\rho_0,y)).
\]
\end{proposition}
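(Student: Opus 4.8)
The plan is to bound the modulus of $\Phi_z$ on the circle $|x|=\rho_N(y)$ by a positivity argument of Daffodil–lemma type. Write $\rho_0:=\rho_N(y)$ and $N_0:=N(\rho_0,y)$; by $\alpha$-niceness and the singularity analysis of $\Phi(x,y,N)=0$ one has $N_0<\infty$. I may assume $\Phi_z(\rho_0,y,N_0)\le 0$, since otherwise $-\Phi_z(\rho_0,y,N_0)<0\le|\Phi_z(\rho_\theta,y,N(\rho_\theta,y))|$ and there is nothing to show; in the cases of interest $-\Phi_z(\rho_0,y,N_0)$ is moreover finite (it equals $0$ in the subcritical branch-point regime). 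Using $\partial_z\!\bigl(\tfrac{xz^2}{1+xz}\bigr)=1-(1+xz)^{-2}$ I decompose
\[
\Phi_z(x,y,z)=T_z(x,z)-\psi(x,z),\qquad \psi(x,z):=\frac1{1+z}+\frac1{(1+xz)^2}-1,
\]
so that $\psi(\rho_0,N_0)=T_z(\rho_0,N_0)-\Phi_z(\rho_0,y,N_0)\ge 0$. The plan is to prove, for all $\theta\in(0,2\pi)$,
\[
\text{(A)}\ \ |T_z(\rho_\theta,N(\rho_\theta,y))|<T_z(\rho_0,N_0),\qquad \text{(B)}\ \ |\psi(\rho_\theta,N(\rho_\theta,y))|\ge\psi(\rho_0,N_0);
\]
then the chain $|\psi(\rho_\theta,\cdot)|\ge\psi(\rho_0,N_0)\ge T_z(\rho_0,N_0)>|T_z(\rho_\theta,\cdot)|$ justifies applying the reverse triangle inequality to $\Phi_z=T_z-\psi$, yielding $|\Phi_z(\rho_\theta,y,N(\rho_\theta,y))|\ge|\psi(\rho_\theta,\cdot)|-|T_z(\rho_\theta,\cdot)|>\psi(\rho_0,N_0)-T_z(\rho_0,N_0)=-\Phi_z(\rho_0,y,N_0)$, which is the assertion.

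For (A) I would first record that $N(\cdot,y)$ has non-negative Taylor coefficients with $N(0,y)=y>0$ and nonzero coefficients of $x^1$ and $x^2$ (there are networks with one and with two labelled vertices), hence is aperiodic; a boundary version of the Daffodil lemma — legitimate because $\sum_n[x^n]N(x,y)\,\rho_0^n=N_0<\infty$, so $N$ extends continuously to the closed disc — then gives $|N(\rho_\theta,y)|<N_0$ for every $\theta\in(0,2\pi)$. Since $T_z(x,z)$ is a bivariate power series with non-negative coefficients that genuinely involves $z$ (for instance the term of $\bar T$ coming from $K_4$ minus an edge) and converges at $(\rho_0,N_0)$ (finite there, as $\Phi_z(\rho_0,y,N_0)$ is), the triangle inequality gives $|T_z(\rho_\theta,N(\rho_\theta,y))|\le T_z(\rho_0,|N(\rho_\theta,y)|)<T_z(\rho_0,N_0)$.

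Estimate (B) is the crux, and the Daffodil lemma is not directly available because $x\mapsto\psi(x,N(x,y))$ has coefficients of mixed sign. The relevant tools are the identities valid along the network curve, $1+xN(x,y)=(1-xQ(x,y))^{-1}$ and $1+N(x,y)=(1+y)e^{S(x,y)+H(x,y)}$, where $Q:=y+P+H=N-S$ and $S,H$ (hence $Q$, and $S_x,H_x,N_x$) have non-negative $x$-coefficients; these produce the two shapes
\[
\psi(x,N(x,y))=\frac1{1+N(x,y)}+\bigl(1-xQ(x,y)\bigr)^2-1=\frac{Q(x,y)^2+H_x(x,y)}{N_x(x,y)},
\]
the last with a numerator of non-negative coefficients. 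On $|x|=\rho_0$ the same identities give the modulus bounds $|1+N(\rho_\theta,y)|=(1+y)e^{\operatorname{Re}(S+H)(\rho_\theta)}\le 1+N_0$ and $|1+\rho_\theta N(\rho_\theta,y)|=|1-\rho_\theta Q(\rho_\theta,y)|^{-1}\le 1+\rho_0N_0$ (using $|\rho_\theta Q(\rho_\theta,y)|\le\rho_0Q(\rho_0,y)$ and $1+\rho_0N_0=(1-\rho_0Q(\rho_0,y))^{-1}$), so each of the two fractions occurring in $\psi$ has modulus at least its value at $\rho_0$. I would then derive (B) by combining a local analysis near $\theta=0$ — where the branch-point behaviour of $N$ at $\rho_0$ (square-root type, cf.\ Lemma~\ref{lem:branchPoint}, or $(1-x/\rho_0)^\alpha$ type in general) gives $\psi(\rho_\theta,N(\rho_\theta,y))-\psi(\rho_0,N_0)=c\,(1-e^{i\theta})^{\beta}(1+o(1))$ with $c\neq0$ and $\beta\in(0,1)$, forcing $|\psi(\rho_\theta,\cdot)|>\psi(\rho_0,N_0)$ — with the range $\theta\in[\delta,2\pi-\delta]$, where $|N(\rho_\theta,y)|$ is bounded away from $N_0$ by continuity on the closed disc and the modulus bounds above can be pushed through.

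The step I expect to be the main obstacle is precisely (B) for $\theta$ bounded away from $0$: each fraction in $\psi$ has large modulus, but its argument is individually unconstrained, so the two summands (together with the $-1$) could conspire to make $|\psi(\rho_\theta,N(\rho_\theta,y))|$ small, and they cannot be estimated in isolation. The resolution must exploit that the single quantity $z=N(\rho_\theta,y)$ feeds \emph{both} fractions at once — i.e.\ use the functional equation $\Phi(x,y,N)=0$ together with the strict separation $|N(\rho_\theta,y)|<N_0$ to pin $1-\rho_\theta Q(\rho_\theta,y)$ (equivalently $1+\rho_\theta N(\rho_\theta,y)$) and $1+N(\rho_\theta,y)$ close enough to their real-axis values.
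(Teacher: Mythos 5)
Your overall strategy --- splitting $\Phi_z = T_z - \psi$ and applying the reverse triangle inequality, with $|T_z(\rho_\theta, N(\rho_\theta,y))| < T_z(\rho_0,N_0)$ coming from aperiodicity and non-negativity of coefficients --- is exactly the paper's, and your step (A) is fine. The genuine gap is step (B), and you have diagnosed it yourself: keeping $\psi$ in the form $\frac{1}{1+z} + \frac{1}{(1+xz)^2} - 1$, you can only lower-bound the modulus of each fraction separately, and nothing prevents the three summands from nearly cancelling at some $\theta\neq 0$. Your proposed rescue (a local expansion near $\theta=0$ plus ``pinning'' via the functional equation for $\theta$ bounded away from $0$) is not carried out and, as sketched, does not yield a proof; in particular the alternative form $\psi=(Q^2+H_x)/N_x$ cannot help, because for a numerator with non-negative coefficients the triangle inequality only bounds its modulus from \emph{above}.

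The missing idea is purely algebraic: put $\psi$ over a common denominator. One computes
\[
	\psi(x,z) = \frac{1}{1+z} + \frac{1}{(1+xz)^2} - 1 = \frac{1 - xz^2(2+xz)}{(1+z)(1+xz)^2},
\]
so the numerator is $1$ minus a polynomial with non-negative coefficients, while the denominator is a polynomial with non-negative coefficients. Writing $N_\theta = N(\rho_\theta,y)$ and using $|N_\theta|\le N_0$ and $|\rho_\theta|=\rho_0$, this yields
\[
	\left|1 - \rho_\theta N_\theta^2(2+\rho_\theta N_\theta)\right| \ge 1 - \rho_0 N_0^2(2+\rho_0 N_0)
	\quad\text{and}\quad
	\left|(1+N_\theta)(1+\rho_\theta N_\theta)^2\right| \le (1+N_0)(1+\rho_0 N_0)^2,
\]
hence $|\psi(\rho_\theta,N_\theta)| \ge \psi(\rho_0,N_0)$ in the only non-trivial case $\psi(\rho_0,N_0)\ge 0$ (if $\psi(\rho_0,N_0)<0$ then $-\Phi_z(\rho_0,y,N_0)\le\psi(\rho_0,N_0)<0$ and the proposition is vacuous). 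This single-fraction form is precisely what the paper uses; with it your chain of inequalities closes, and no analysis near $\theta=0$ nor any appeal to the functional equation is needed.
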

\begin{proof}
A simple calculation shows that $\Phi_z(x,y,z) = T_y(x,z) - \frac{1 - xz^2(2+xz)}{(1+z)(1+xz)^2}$. Let us abbreviate~$N(\rho_\theta, y) = N_\theta$. By applying the inequality $|a - b| \ge |b| - |a|$, which is valid for all complex numbers $a,b$, we obtain
\[
	|\Phi_z(\rho_\theta, y, N(\rho_\theta, y))|
	\ge
	\left|\frac{1 - \rho_\theta N_\theta^2(2 + \rho_\theta N_\theta)}{(1 + N_\theta)(1 + \rho_\theta N_\theta)^2}\right|
	- |T_y(\rho_\theta, N(\rho_\theta, y))|.
\]
As $T(x,y)$ and $N(x,y)$ have only non-negative coefficients, and $T$ is aperiodic, the triangle inequality implies that
\[
	|T_y(\rho_\theta, N_\theta)|
	< T_y(|\rho_\theta|, |N(\rho_\theta, y)|)
	\le T_y(|\rho_\theta|, N(|\rho_\theta|, y))
	= T_y(\rho_0, N_0).
\]
Similarly, we obtain that
\[
	|1 - \rho_\theta N_\theta^2(2 + \rho_\theta N_\theta)|
	\ge
	1 - |\rho_\theta N_\theta^2(2 + \rho_\theta N_\theta)|
	\ge
	1 - \rho_0 N_0^2(2 + \rho_0 N_0),
\]
and,
\[
	|(1 + N_\theta)(1 + \rho_\theta N_\theta)^2| \le (1 + N_0)(1 + \rho_0 N_0)^2.
\]
The proof completes by putting everything together.
\end{proof}
The remainder of this section deals separately with the subcritical and the critical case.
\subsection*{The Subcritical Case}
Our result in this section says that if $\Phi$ ceases to be invertible at some analytic point inside its domain, then the egf enumerating networks has a singularity of the square-root type.
\begin{lemma} 
\label{lem:branchPoint}
Let $\Phi(x,y,z)$ be as in~\eqref{eq:Phi}, and let $Y$ be a compact subset of $(0, +\infty)$. Suppose that for any $y_0\in Y$ there exists a minimal $N_0>0$ and a minimal $0 < x_0 < \rho_T(N_0)$ such that
\begin{equation}
\label{eq:PhiBranch}
	\Phi(x_0, y_0, N_0) = 0
	\enspace \text{ and } \enspace
	\Phi_z(x_0, y_0, N_0) = 0.
\end{equation}
Then there exists an $\eps > 0$ such that $N(x,y)$ admits a local representation
\[
	N(x,y) = g(x,y) - h(x,y)\sqrt{1 - \frac{x}{\rho_N(y)}},
\]
for $y\in Y$, $|x - \rho_N(y)| < \eps$ and $|\text{arg}(x - \rho_N(y))| \neq 0$, where $g(x,y), h(x,y)$ and $\rho_N(y)$ are analytic around $x = x_0$ and $y = y_0$, and $\rho_N(y_0) = x_0$ and $g(x_0,y_0) = N_0$. Moreover, for any~$y\in Y$,~$N(x,y)$ is analytic in a domain $\Delta(\rho(y))$.
\end{lemma}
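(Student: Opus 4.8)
The plan is to apply the Weierstrass Preparation Theorem (in its analytic, three-variable form) to $\Phi$ at the point $(x_0, y_0, N_0)$. Since $x_0 < \rho_T(N_0)$, the function $T(x,z)$ — and hence $\Phi(x,y,z)$ as given by~\eqref{eq:Phi} — is analytic in a full complex neighborhood of $(x_0,y_0,N_0)$. The two conditions in~\eqref{eq:PhiBranch} say that $\Phi$ vanishes there together with its $z$-derivative. The first step is therefore to compute the \emph{second} $z$-derivative and check that $\Phi_{zz}(x_0,y_0,N_0)\neq 0$; a direct differentiation of the closed form $\Phi_z(x,y,z) = T_y(x,z) - \frac{1 - xz^2(2+xz)}{(1+z)(1+xz)^2}$ given in the proof of Proposition~\ref{prop:PhiPositive} shows $\Phi_{zz}$ is a sum of $T_{yy}\ge 0$ (with strict positivity coming from aperiodicity of $T$) and a rational term, and one verifies positivity there. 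Granting $\Phi_{zz}(x_0,y_0,N_0)>0$, Weierstrass preparation lets us write, in a neighborhood of that point,
\[
	\Phi(x,y,z) = U(x,y,z)\bigl((z - g(x,y))^2 + p_1(x,y)(z-g(x,y)) + p_0(x,y)\bigr),
\]
with $U$ analytic and non-vanishing and $g, p_0, p_1$ analytic; after completing the square (absorbing the linear term into a shifted $g$) we get $\Phi = U\cdot\bigl((z-g(x,y))^2 - \Delta(x,y)\bigr)$ where $\Delta(x,y)$ is analytic, $\Delta(x_0,y_0)=0$, and $g(x_0,y_0)=N_0$.

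The second step is to analyze $\Delta(x,y)$ as a function of $x$ for fixed $y$ near $y_0$. Solving $\Phi(x,y,N(x,y))=0$ along the branch that is analytic for $x<\rho_N(y)$ forces $N(x,y) = g(x,y) \pm \sqrt{\Delta(x,y)}$; since $N$ is real and has non-negative coefficients with a genuine singularity at $\rho_N(y)$ (by Pringsheim, and because the supercritical alternative $\rho_N(y)=\rho_T(N_0(y))$ is excluded by the hypothesis $x_0<\rho_T(N_0)$ together with minimality), the singularity must come from $\sqrt{\Delta}$, so $\Delta(\rho_N(y),y)=0$. One then checks $\Delta_x(x_0,y_0)\neq 0$ — equivalently $\Phi_x(x_0,y_0,N_0)\neq 0$, which holds because $\Phi_x = T_x + \frac{z^2}{(1+xz)^2} > 0$ for positive arguments — so by the Implicit Function Theorem $\rho_N(y)$ is the analytic solution of $\Delta(x,y)=0$ near $y_0$, with $\rho_N(y_0)=x_0$, and we may factor $\Delta(x,y) = (1 - x/\rho_N(y))\, k(x,y)$ with $k$ analytic and $k(x_0,y_0)>0$. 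Setting $h(x,y) = \sqrt{k(x,y)}$ (analytic near the point since $k$ is non-zero there) and choosing the minus sign to match the principal branch for $x<\rho_N(y)$ yields the claimed local representation $N(x,y) = g(x,y) - h(x,y)\sqrt{1 - x/\rho_N(y)}$.

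The third step is to upgrade this from a punctured-neighborhood statement to analyticity in a full $\Delta$-domain $\Delta(\rho_N(y))$ for each $y\in Y$, and to make everything uniform over the compact set $Y$. For the $\Delta$-domain: away from $x=\rho_N(y)$, Proposition~\ref{prop:PhiPositive} (applied with the relevant $y$) guarantees that $\Phi_z$ does not vanish at any point $(\rho_\theta, y, N(\rho_\theta,y))$ with $0<\theta<2\pi$ on the circle of radius $\rho_N(y)$, and in fact $\Phi_z$ stays bounded away from $0$ there; combined with $\rho_N(y) < \rho_T(N_0(y))$ (so $\Phi$ remains analytic on and slightly beyond that circle), the Implicit Function Theorem provides an analytic continuation of $N(\cdot,y)$ along every ray except the positive real one, which is exactly analyticity in some $\Delta(\rho_N(y))$. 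For uniformity over $Y$: the point $(x_0,y_0,N_0)=(\rho_N(y_0),y_0,N_0(y_0))$ depends continuously on $y_0$ (by the IFT applied to the system $\Phi=\Phi_z=0$, whose Jacobian in $(x,z)$ is non-degenerate given $\Phi_x\neq 0$, $\Phi_{zz}\neq 0$), the radii $\eps(y_0)$ and the lower bounds on $|\Phi_z|$, $\Phi_{zz}$, $k$ vary continuously, and a standard compactness argument over $Y$ produces a single $\eps>0$ and uniform domains.

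The main obstacle I expect is not any single inequality but the bookkeeping needed to pass cleanly from the Weierstrass factorization — which is intrinsically \emph{local} in all three variables near $(x_0,y_0,N_0)$ — to a statement that is local in $x$ but holds \emph{uniformly} for $y$ ranging over the compact set $Y$, while simultaneously identifying the distinguished root $g(x,y)$ with the \emph{correct} (combinatorially meaningful, minimal, real) branch of $N(x,y)$ rather than the spurious conjugate branch. Handling the $\Delta$-domain extension requires the quantitative non-vanishing of $\Phi_z$ on the full circle, which is precisely what Proposition~\ref{prop:PhiPositive} delivers, so the pieces are all in place; the care is in assembling them.
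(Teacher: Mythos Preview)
Your approach is essentially identical to the paper's: Weierstrass preparation at $(x_0,y_0,N_0)$ using $\Phi_{zz}>0$, then a second factorization of the discriminant using $\Phi_x>0$ to extract $\rho_N(y)$, and finally Proposition~\ref{prop:PhiPositive} for the $\Delta$-domain. The one place the paper is more explicit is precisely the branch selection you flag as the main obstacle: it shows, via the strict monotonicity $\Phi_x>0$ and the sign $\Phi_z(x,y_0,N)<0$ for all $0\le x<x_0$, $0\le N<N_0$ with $\Phi=0$, that the analytic solution started at $x=0$ cannot leave the rectangle $[0,x_0]\times[0,N_0]$ and therefore connects to the increasing (``$-$'') branch at $(x_0,N_0)$.
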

\begin{remark}
Proofs of similar statements were found in the literature for e.g.\ the egf enumerating series-parallel networks~\cite{ar:bgkn05}. However, the proof of the above claim requires more work: the function $\Phi$ turns out to have in general negative coefficients, which makes the application of standard tools (see e.g.~\cite{Dmorta}) for this purpose impossible. Nevertheless, the proof that is presented here makes more or less use of standard techniques, and we include it for completeness.
\end{remark}
\begin{proof}
Let us first collect some basic properties of $\Phi$. As $T$ has non-negative coefficients, for any $x,y \ge 0 $ and $z> 0$
\[
	\Phi_{zz}(x,y,z) = T_{zz}(x,z) + \frac{1+7xz+3x^2z^2+x^3z^3+2x+2xz^2}{(1 + z)^2 (1 + xz)^3}>0,
\]
and
\begin{equation}
\label{eq:Phi_x}
	\Phi_x(x,y,z) = T_{x}(x,z) + \frac{z^2}{(1+xz)^2} >0.
\end{equation}
As $\Phi(x_0,y_0,N_0) = \Phi_z(x_0, y_0, N_0) = 0$ and $\Phi_{zz}(x_0, y_0, N_0) \neq 0$, the Weierstrass Preparation Theorem guarantees the existence of a function~$H(x,y,z)$, which is analytic around~$(x_0,y_0,N_0)$ and~$H(x_0,y_0,N_0)\neq 0$, and two functions~$p(x,y), q(x,y)$ that are analytic around~$(x_0, y_0)$ and satisfy $p(x_0, y_0) = q(x_0, y_0) = 0$, such that
\begin{equation}
\label{eq:PhiafterWeiserstrass}
	\Phi(x,y,z) = H(x,y,z)\left((z-N_0)^2 + p(x,y)(z - N_0) + q(x,y)\right).
\end{equation}
So, every function that satisfies $\Phi(x,y,N(x,y)) = 0$ is given in a neighborhood of $(x_0,y_0)$ by one of the two functions
\[
	N(x,y) = N_0 - \frac{p(x,y)}{2} \pm \sqrt{\frac{p(x,y)^2}{4} - q(x,y)}.
\]
Note that $\frac{p(x_0,y_0)^2}{4} - q(x_0,y_0) = 0$. Moreover, by differentiating both sides of~\eqref{eq:PhiafterWeiserstrass} by $x$ we obtain that $q_x(x_0,y_0) \neq 0$, as $\Phi_x(x_0, y_0, N_0) \neq 0$. Hence $\frac{p_x(x_0,y_0)^2}{4} - q_0(x_0,y_0) \neq 0$, and again the Preparation Theorem guarantees the existence of a function $K(x,y)$, which is analytic around $(x_0, y_0)$ and $K(x_0,y_0) \neq 0$, and a function $r(y)$, which is analytic around $y_0$ and $r(y_0) = 0$, such that
\[
	\frac{p(x,y)^2}{4} - q(x,y) = K(x,y)(x - x_0 + r(y)).
\]
By putting everything together we obtain that $N$ has a local representation around $(x_0, y_0)$ of the kind
\[
	N(x,y) = g(x,y) \pm h(x,y)\sqrt{1 - \frac{x}{\rho_N(y)}},
\]
where $g,h$ are analytic around $(x_0, y_0)$ and $g(x_0,y_0) = N_0$, and $\rho_N(y)$ is analytic around $y_0$ and $\rho_N(y_0) = x_0$. Moreover, we choose $h$ without loss of generality such that $h(x_0, y_0) > 0$. 

We will argue next that the ``$-$''-sign above is the right choice. Denote the solution with the~$-$-sign by $N_1$, and the other one by $N_2$. Note that by the minimality of $x_0$ and $N_0$ we have $\Phi_z(x, y,N) \neq 0$ for $0 \le x < x_0$ and $0 \le N< N_0$ such that $\Phi(x,y,N) = 0$, which implies that there is a unique analytic function that satisfies $\Phi(x, y, N(x,y)) = 0$ around $0$. Moreover, note that the function $N_1$ is increasing as $x$ approaches $x_0$ from left, while $N_2$ is decreasing. To show that the increasing branch is the function we are looking for, we show that there is a analytic curve that connects $N_1$ to the function that is analytic around $x = 0$.

We show the claim by contradiction. Let $y_0\in Y$. First, suppose that there is a $0 < \tilde x_0 < x_0$ such that $N(\tilde x_0, y_0) = N_0$. Then $\Phi(\tilde x_0, y, N_0) = \Phi(x_0, y_0, N_0) = 0$, which is a contradiction, as $\Phi$ is monotone with respect to its first variable, due to~\eqref{eq:Phi_x}. On the other hand, suppose that there is a $\tilde N_0 < N_0$ such that $N(x_0, y_0) = \tilde N_0$. Note that $\Phi_z(x_0,y_0,0) = -1 < 0$, and due to the minimality of $N_0$, for all $0< N < N_0$ we have that $\Phi_z(x_0,y_0,N) < 0$. This again contradicts the assumption $\Phi(x_0,y_0,\tilde N_0) = \Phi(x_0,y_0,N_0)$. In other words, we have shown that the the solution of $\Phi(x,y_0,N(x,y_0)) = 0$ never ``leaves'' the rectangle $\{(x,N) ~|~ 0 \le x \le x_0, 0\le N\le N_0\}$, thus implying that that the increasing branch analytically continues the solution around 0.

To complete the proof we will show that $N(x,y)$ is analytic in a $\Delta$-domain. By applying Proposition~\ref{prop:PhiPositive} we obtain for \emph{any} $\rho_\theta = \rho_N(y)e^{i\theta}$, where $0 < \theta < 2\pi$, that
\[
	\Phi_z(\rho_\theta, y, N(\rho_\theta, y))
	> -\Phi_z(\rho_0, y, N(\rho_0, y))
	= 0.
\]
Thus, the Implicit Function Theorem shows that there is an analytic continuation of $N(x,y)$ around $\rho_\theta$, where $0 < \theta<2\pi$. This completes the proof.
\end{proof}

\subsection*{The Critical Case \& Transfer of Singular Expansions}
The following result provides us with a local expansion of a function $f$ that is given implicitly by a functional equation $f = G(x,y,f)$. In contrast to the previous result, here we handle general functions $G$ at points where they \emph{are not} analytic, thus extending a result from~\cite{Dmorta}, where just the case of singular behavior of type $3/2$ was treated. This result will be used in the proof of Lemma~\ref{lem:PhiSingular}, which complements the statement of Lemma~\ref{lem:branchPoint} in the case where the function $\Phi$ is invertible in its whole disc of convergence.
\begin{theorem}
\label{thm:singTransferGeneral}
Suppose that $G(x,y,f)$ has a local representation of the form
\[
	G(x,y,f) = g(x,y,f) + h(x,y,f)\left(1 - \frac{f}{r(x,y)}\right)^{{k}/{m}},
\]
where $k,m\in\mathbb{N}$, $1 < k/m \not\in\mathbb{N}$, the functions $g,h,r$ are analytic around $(x_0, y_0, f_0)$, and satisfy
\[
	g_f(x_0,y_0,f_0) \neq 1,
	\enspace
	h(x_0,y_0,f_0) \neq 0,
	\enspace
	r(x_0, y_0) \neq 0,
	\enspace
	r_x(x_0, y_0) \neq g_x(x_0,y_0,f_0).
\]
Suppose that $f = f(x,y)$ is a solution of the functional equation $f = G(x,y,f)$ such that $f(x_0, y_0) = f_0$. Then $f$ admits a local representation of the form
\begin{equation}
\label{eq:fsingExp}
	f(x,y) = \tilde{g}(x,y) + \tilde{h}(x,y)\left(1 - \frac{x}{\rho(y)}\right)^{{k}/{m}},
\end{equation}
where $\tilde{g}, \tilde{h}, \rho$ are analytic at $(x_0,y_0)$ and $\tilde{h}(x_0,y_0) \neq 0$ and $\rho(y_0) = x_0$.
\end{theorem}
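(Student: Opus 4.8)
The strategy is to reduce the functional equation $f = G(x,y,f)$ to a polynomial-type equation in an auxiliary variable to which the Weierstrass Preparation Theorem applies, exactly in the spirit of the proof of Lemma~\ref{lem:branchPoint}, but now carrying the non-integral exponent $k/m$ through the computation. First I would introduce the substitution $u = \left(1 - \frac{f}{r(x,y)}\right)^{1/m}$, so that locally $f = r(x,y)(1 - u^m)$ and the equation $f = G(x,y,f)$ becomes
\[
	r(x,y)(1 - u^m) = g\bigl(x, y, r(x,y)(1-u^m)\bigr) + h\bigl(x, y, r(x,y)(1-u^m)\bigr)\, u^k.
\]
Writing $F(x,y,u)$ for the difference of the two sides, this is an analytic function of $(x,y,u)$ in a neighborhood of $(x_0, y_0, 0)$, since $f_0 = r(x_0,y_0)$ forces $u_0 = 0$. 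The point $(x_0, y_0, 0)$ satisfies $F = 0$ because $f_0 = G(x_0,y_0,f_0)$. One then checks the order of vanishing of $F$ in $u$ at $u = 0$: since $1 < k/m$, the term $h\cdot u^k$ and the term $-r u^m$ both vanish to order at least $\min(k,m) = m$ (note $k > m$), while the coefficient of $u^m$ is $-r(x_0,y_0)\bigl(1 - g_f(x_0,y_0,f_0)\bigr) \ne 0$ by the hypotheses $r(x_0,y_0)\ne 0$ and $g_f(x_0,y_0,f_0)\ne 1$ (the lower-order terms $u^1, \dots, u^{m-1}$ all vanish identically, as $F$ only involves $u$ through $u^m$ and $u^k$). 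Hence $F$ vanishes to order exactly $m$ in $u$.

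By the Weierstrass Preparation Theorem, $F(x,y,u) = K(x,y,u)\bigl(u^m + a_{m-1}(x,y)u^{m-1} + \cdots + a_0(x,y)\bigr)$ with $K$ analytic and nonzero at $(x_0,y_0,0)$ and the $a_j$ analytic near $(x_0,y_0)$ vanishing at that point. Because the Weierstrass polynomial is obtained by dividing $F$ by the unit $K$, and $F$ — as a series in $u$ — has no terms $u^1, \dots, u^{m-1}$ but does have nonzero $u^m$ term and the next term is $u^k$ with $k > m$, one gets $a_{m-1} = \cdots = a_1 = 0$ and only $a_0$ survives, i.e.\ the Weierstrass polynomial is simply $u^m + a_0(x,y)$. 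Thus the relevant branch of the solution satisfies $u^m = -a_0(x,y)$, and the task reduces to analyzing $a_0$. From $F(x,y,0) = f_0 - G(x_0,y_0,\cdot)$ evaluated appropriately, one reads off $a_0(x_0,y_0) = 0$; differentiating in $x$ and using $r_x(x_0,y_0) \ne g_x(x_0,y_0,f_0)$ gives $\frac{\partial a_0}{\partial x}(x_0,y_0) \ne 0$. A final application of the Preparation Theorem (or just the analytic implicit function theorem) then writes $a_0(x,y) = L(x,y)\bigl(x - \rho(y)\bigr)$ with $L$ analytic and nonzero at $(x_0,y_0)$ and $\rho$ analytic with $\rho(y_0) = x_0$. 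Consequently $u^m = -L(x,y)(x - \rho(y))$, so $u = \bigl(-L(x,y)\rho(y)\bigr)^{1/m}\bigl(1 - \frac{x}{\rho(y)}\bigr)^{1/m}$ for a suitable choice of $m$-th root, and substituting back into $f = r(x,y)(1 - u^m)$ yields
\[
	f(x,y) = r(x,y)\Bigl(1 + L(x,y)\rho(y)\bigl(1 - \tfrac{x}{\rho(y)}\bigr)\Bigr) \;-\; r(x,y)\bigl(-L(x,y)\rho(y)\bigr)^{k/m}\bigl(1 - \tfrac{x}{\rho(y)}\bigr)^{k/m},
\]
which is precisely the claimed form~\eqref{eq:fsingExp} after collecting the analytic prefactors into $\tilde g$ and $\tilde h$; the coefficient $\tilde h(x_0,y_0) \ne 0$ because $r(x_0,y_0) \ne 0$, $L(x_0,y_0) \ne 0$, $\rho(y_0) = x_0 \ne 0$, and $h(x_0,y_0,f_0) \ne 0$ (the last being needed so that the $u^k$ term is genuinely present and not cancelled).

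\textbf{Main obstacle.} The delicate point is bookkeeping the structure of the Weierstrass polynomial: one must argue carefully that the division by the unit $K$ does not reintroduce the intermediate powers $u^1,\dots,u^{m-1}$, and that the next nonvanishing contribution beyond $u^m$ comes at order $k$, so that solving $u^m = -a_0$ and raising to the $k$ gives exactly the exponent $k/m$ with a nonzero coefficient. This requires tracking the expansion of $F$ in powers of $u$ to the relevant order and keeping the two regimes $m < k < 2m$ and $k \geq 2m$ straight — in the latter case one additionally has to verify that the $u^{2m}$ term coming from $-r u^m$ substituted into $g$ and $h$ does not interfere before order $k$, or else absorb it. A secondary technicality, as in Lemma~\ref{lem:branchPoint}, is that $G$ is not assumed to have nonnegative coefficients, so the selection of the correct branch of the $m$-th root (equivalently, the correct local solution $f$ with $f(x_0,y_0) = f_0$) must be pinned down by continuity/analytic continuation from a point where $f$ is already known to be analytic, rather than by a positivity argument; but here the theorem only asserts existence of \emph{a} solution with the prescribed value, so one simply verifies that the branch just constructed has $f(x_0,y_0) = f_0$, which it does by construction since $u_0 = 0$.
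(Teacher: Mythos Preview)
There is a genuine gap. Your central claim --- that the Weierstrass polynomial of $F(x,y,u)$ collapses to $u^m + a_0(x,y)$ --- is false, and more to the point, even if it \emph{were} true your argument would self-destruct. Grant for a moment that $W = u^m + a_0$; then along the solution branch you have $u^m = -a_0(x,y)$, and substituting back into $f = r(x,y)(1 - u^m)$ gives $f = r(x,y)(1 + a_0(x,y))$, which is \emph{analytic} at $(x_0,y_0)$. The singular term $(1-x/\rho)^{k/m}$ never appears. The second summand in your displayed formula for $f$ is simply not a consequence of what precedes it: you seem to have computed $u^k = (-a_0)^{k/m}$ and inserted it, but $u^k$ does not occur anywhere in the relation $f = r(1-u^m)$. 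As for the Weierstrass-polynomial claim itself: although $F$ as a power series in $u$ only carries exponents in $\{jm : j\ge 0\}\cup\{k+jm : j\ge 0\}$, dividing by a unit $K(x,y,u)$ destroys this sparsity, and the coefficients $a_1,\dots,a_{m-1}$ are generically nonzero (they vanish at $(x_0,y_0)$ but not identically). Already the case $m=2$, $k=3$ produces $a_1 = -\frac{k_1}{k_0}a_0 + \cdots \not\equiv 0$.

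The paper avoids Weierstrass preparation in the variable $u$ altogether. It expands the equation $f = G(x,y,f)$ as a series in $F := (1-f/r)^{1/m}$,
\[
r(x,y) - a_0(x,y) \;=\; (a_m + r)F^m + a_{2m}F^{2m} + \cdots + a_{sm}F^{sm} + a_kF^k + \cdots,
\]
applies Weierstrass preparation only to the \emph{left-hand side} (a function of $(x,y)$ alone, using $r_x \ne g_x$) to write it as $-H(x,y)\rho(y)\,X^m$ with $X = (1-x/\rho(y))^{1/m}$, and then takes the $m$-th root of both sides so that the right-hand side becomes $F$ times a unit series in $F$. This relation between $X$ and $F$ has nonzero linear term (since $a_m + r = (1-g_f)r \ne 0$), so it can be \emph{inverted} as a power series. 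The inversion carries the $F^k$ term to an $X^{k-m+1}$ contribution in $F$, and hence to an $X^k$ term in $F^m = 1 - f/r$, producing the $(1-x/\rho)^{k/m}$ singularity with nonzero coefficient. The essential difference from your route is that series inversion keeps the $u^k$ information in play throughout, whereas a Weierstrass factorization in $u$ pushes everything beyond degree $m$ into the unit $K$ and thereby hides precisely the term you need.
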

The next lemma takes care of the cases in which $\Phi$ is not analytic at $(\rho_N(y),y,N(\rho_N(y),y))$.
\begin{lemma}
\label{lem:PhiSingular}
Let $\Phi(x,y,z)$ be as in~\eqref{eq:Phi}, and let $Y$ be a compact subset of $(0, +\infty)$. Suppose that for all $y_0\in Y$ and all $N_0 > 0$ and $0 < x_0 \le \rho_T(N_0) $ such that $\Phi(x_0, y_0, N_0) = 0$ it holds
\[
	\Phi_z(x_0, y_0, N_0) \neq 0.
\]
Then, if $T(x,y)$ admits a local representation of the form~\eqref{Ass:SingExp} and is analytic in a $\Delta$-domain, there exists an $\eps > 0$ such that $N(x,y)$ admits a local representation
\[
	N(x,y) = g(x,y) + h(x,y)\left(1 - \frac{x}{\rho_N(y)}\right)^{k/m},
\]
for $y\in Y$, $|x - \rho_N(y)| < \eps$ and $|\text{arg}(x - \rho_N(y))| \neq 0$, where $g(x,y), h(x,y)$ and $\rho_N(y)$ are analytic around $x = x_0$ and $y = y_0$, and $g(\rho_N(y_0),y_0) = N_0$. Moreover, for any $y\in Y$,~$N(x,y)$ is analytic in a domain $\Delta(\rho_N(y))$.
\end{lemma}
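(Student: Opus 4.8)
The plan is to apply Theorem~\ref{thm:singTransferGeneral} with $f = N$, viewing the defining equation $\Phi(x,y,N(x,y)) = 0$ as a fixed-point equation $N = G(x,y,N)$ for a suitable $G$. The starting point is the observation that $\Phi$ ceases to be analytic only because of the term $T(x,z)$: by definition, $\Phi(x,y,z) = T(x,z) - \log\!\left(\frac{1+z}{1+y}\right) + \frac{xz^2}{1+xz}$, and the last two summands are analytic on the relevant region. Since we are in the \emph{critical} (supercritical) regime, we have $\rho_N(y) = \rho_T(N_0(y))$, so the singularity of $N(x,y)$ in $x$ comes precisely from $T$ hitting its singularity in its \emph{second} argument, evaluated at $z = N$. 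Thus I would first rewrite the assumed singular expansion~\eqref{Ass:SingExp} of $\bar T$, which is in the variable $x$ around $\rho_{\bar T}(y)$, as an expansion in the second variable: by the monotonicity of $\rho_T$ and the inverse function theorem there is an analytic function $\sigma$ with $\sigma(\rho_T(z_0), y) = z_0$ such that $T(x,z)$ near $(x_0, z_0)$ (with $x_0 = \rho_T(z_0)$) has the form $g_0(x,z) + h_0(x,z)\bigl(1 - z/\sigma(x,\cdot)\bigr)^{k/m}$ — i.e.\ a singular expansion of type $k/m$ in $z$ with analytic coefficients and an analytic ``critical value'' hypersurface. This is the step that transports the hypothesis on $T$ into exactly the shape required by Theorem~\ref{thm:singTransferGeneral}.

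Next I would set $G(x,y,z) := z - \Phi(x,y,z) + (\text{const adjust if needed})$; more precisely, since $\Phi_z$ is real and nonzero at the relevant point — by hypothesis $\Phi_z(x_0,y_0,N_0)\neq 0$ — one can solve $\Phi = 0$ for the ``$z - (\text{analytic part})$'' piece, isolating the singular term. Concretely, writing $\Phi(x,y,z) = \Phi^{\mathrm{an}}(x,y,z) + T_{\mathrm{sing}}(x,z)$ where $\Phi^{\mathrm{an}}$ collects the analytic summands and the analytic part $g_0$ of $T$, the equation $\Phi(x,y,N)=0$ becomes $N = N - \Phi^{\mathrm{an}}(x,y,N) - h_0(x,N)(1 - N/\sigma(x))^{k/m}$, which is of the form $N = G(x,y,N)$ with $G$ enjoying exactly the local representation demanded by Theorem~\ref{thm:singTransferGeneral}: the analytic summand $g(x,y,f) = f - \Phi^{\mathrm{an}}(x,y,f)$, the analytic factor $h(x,y,f) = -h_0(x,f)$, and the analytic critical value $r(x,y) = \sigma(x)$. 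I would then verify the four nondegeneracy conditions of that theorem at $(x_0,y_0,N_0)$: $g_f \neq 1$ is equivalent to $\Phi_z(x_0,y_0,N_0) \neq 0$, which is our hypothesis; $h(x_0,y_0,N_0) \neq 0$ follows from $\alpha$-niceness (the coefficient $t_k$ of the genuinely singular term is not identically zero, and $T$ is aperiodic so $h_0$ does not vanish at a point with positive coordinates); $r(x_0,y_0) = \rho_T(N_0) \neq 0$ is immediate; and $r_x \neq g_x$, i.e.\ $\sigma_x(x_0) \neq 1 - \Phi^{\mathrm{an}}_x(x_0,y_0,N_0)$, I would check using $\Phi_x > 0$ from~\eqref{eq:Phi_x} together with the strict monotonicity of $\rho_T$ (equivalently $\sigma$) asserted in assumption~(A). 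Theorem~\ref{thm:singTransferGeneral} then yields the claimed representation $N(x,y) = g(x,y) + h(x,y)\bigl(1 - x/\rho_N(y)\bigr)^{k/m}$ with analytic $g, h, \rho_N$ and $h(x_0,y_0) \neq 0$, and $\rho_N(y_0) = x_0$ follows from matching, while $g(\rho_N(y_0),y_0) = N_0$ is the base-point condition.

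Finally, for the $\Delta$-domain analyticity I would repeat the argument from the end of the proof of Lemma~\ref{lem:branchPoint}: for any $\rho_\theta = \rho_N(y)e^{i\theta}$ with $0 < \theta < 2\pi$, Proposition~\ref{prop:PhiPositive} gives $|\Phi_z(\rho_\theta, y, N(\rho_\theta,y))| > -\Phi_z(\rho_0,y,N_0) \geq 0$ (here one must note that in the critical case the right-hand side may be $-\Phi_z(\rho_0,\ldots)$ which could be negative, so $\Phi_z \neq 0$ at $\rho_\theta$ holds trivially when $\Phi_z(\rho_0,\ldots) > 0$, and from the strict inequality otherwise), so $\Phi_z$ does not vanish on the punctured circle; by the Implicit Function Theorem $N$ continues analytically past each such $\rho_\theta$, and combined with the local expansion near $x_0$ this pieces together to analyticity in a domain $\Delta(\rho_N(y))$. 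Uniformity over the compact set $Y$ follows from a standard compactness argument, since all the analytic functions and the nonvanishing conditions are continuous in $y_0$. The main obstacle I anticipate is the first step — cleanly converting the hypothesized expansion~\eqref{Ass:SingExp} of $T$ in its \emph{first} variable into an expansion in its \emph{second} variable with analytic coefficients and an analytic critical-value function, and making sure the exponent $k/m$ and the aperiodicity/non-vanishing of the leading singular coefficient survive this inversion — together with the verification that $G$ truly has the exact local form required by Theorem~\ref{thm:singTransferGeneral} rather than merely a superficially similar one.
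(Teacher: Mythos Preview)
Your plan is essentially the paper's own: rewrite the assumed $x$-expansion of $T$ as a $z$-expansion via the inverse of $\rho_T$ (the paper does this with the Weierstrass Preparation Theorem, but since $\rho_T'(N_0)\neq 0$ your use of the inverse function theorem amounts to the same thing), and then feed the resulting representation of $\Phi$ into Theorem~\ref{thm:singTransferGeneral} with $G(x,y,f)=f-\Phi(x,y,f)$. The paper also declares the verification of the nondegeneracy hypotheses ``routine''; your identification $g_f\neq 1 \Leftrightarrow \Phi_z\neq 0$ is correct since $k/m>1$ kills the contribution of the singular term to $\Phi_z$ at $z=r(x)$.

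There is one real omission in your $\Delta$-domain step. You propose to ``repeat the argument from the end of the proof of Lemma~\ref{lem:branchPoint}'', but that argument silently used that $\Phi$ is analytic at every $(\rho_\theta,y,N(\rho_\theta,y))$ with $\theta\neq 0$, which held there because $\rho_N<\rho_T(N_0)$. In the present critical case $\rho_N=\rho_T(N_0)$, so you must additionally argue that $T$ --- and hence $\Phi$ --- is analytic at those points; this is precisely where the hypothesis that $T$ has a \emph{unique} dominant singularity and is analytic in a $\Delta$-domain gets used. Without it the Implicit Function Theorem is not available at $\rho_\theta$, regardless of whether $\Phi_z$ vanishes. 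Your parenthetical on signs is also tangled: under the lemma's hypothesis $\Phi_z$ never vanishes along the solution curve, and since $\Phi_z(0,y,y)=-1/(1+y)<0$ one has $\Phi_z(\rho_0,y,N_0)<0$ here, so $-\Phi_z(\rho_0,\ldots)>0$ and Proposition~\ref{prop:PhiPositive} gives $|\Phi_z(\rho_\theta,\ldots)|>0$ directly --- there is no case split.
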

\begin{proof}
First of all, note that the assumption on $T(x,y)$ implies that there are two functions $g(x,y,z)$ and $h(x,y,z)$, which are analytic at $(x_0, y_0, N_0)$, such that locally 
\[
	\Phi(x,y,z) = g(x,y,z) + h(x,y,z)\left(1 - \frac{x}{\rho_T(z)}\right)^{k/m}.
\]
Our aim is to apply Theorem~\ref{thm:singTransferGeneral} in order to determine a local expansion for $N(x,y)$, which is implicitly defined by $\Phi(x,y,N(x,y)) = 0$. To achieve this, we shall first show that we can ``switch'' in the above expression between local expansions in terms of $x$ and $z$. Moreover, we further will check the remaining conditions in Theorem~\ref{thm:singTransferGeneral}.

Recall that $\rho_T$ is a strictly decreasing function that attains only positive values. Hence we have that $\rho_T(N_0) \neq 0$ and $\rho_T'(N_0) \neq 0$. The Weierstrass preparation theorem implies that there is a function $H(x,z)$, which is analytic at $(x_0, N_0)$ and $H(x_0, N_0) \neq 0$, such that
\[
	\rho_T(z) - x = H(x,z)(r(x) - z),
\]
where $r(x)$ is the analytic inverse function of $\rho_T$ in a neighborhood of $x_0$. Hence we infer that there is an analytic function $\tilde{h}(x,y,z)$ such that
\[
	\Phi(x,y,z) = g(x,y,z) + \tilde{h}(x,y,z)\left(1 - \frac{z}{r(x)}\right)^{k/m}.
\]
It is now routine to check from our assumptions the remaining preconditions of Theorem~\ref{thm:singTransferGeneral}. Hence, we obtain the claimed local representation for $N$.

To complete the proof we show that $N$ is analytic in a $\Delta$-domain. For the sake of contradiction, suppose that there is another singularity on the circle of convergence of $N(x,y)$, and write $\tilde\rho_\theta = \rho_N(y) e^{i\theta}$, where $0 < \theta < 2\pi$. Note that $\Phi$ is analytic at $(\tilde\rho(y),y,N(\tilde\rho(y),y))$, because $T$ is analytic there. So, we must have $\Phi_z(\tilde\rho(y),y,N(\tilde\rho(y),y)) = 0$, as otherwise the Implicit Function Theorem would guarantee the existence of an analytic continuation of $N$ around $\rho_\theta$. Then, by applying Proposition~\ref{prop:PhiPositive} we obtain
\begin{equation}
\label{eq:phiCritical}
	0 = \Phi_z(\rho_\theta,y,N(\rho_\theta,y)) > -\Phi_z(\rho_0,y,N(\rho_0,y))
\end{equation}
Note that $\Phi(0,y,y) = 0$, and hence $N(0,y) = y$ (this follows also from the definition of networks, as there is precisely one network that consists of a single edge). A straightforward calculation shows that $\Phi_z(0,y,y) = -\frac{1}{1+y} < 0$. Hence, due to our assumptions we have for all $0\le x\le x_0$ that $\Phi_z(x,y,N(x,y)) < 0$, which implies that~\eqref{eq:phiCritical} is a contradiction.
\end{proof}

\begin{proof}[Proof of Theorem~\ref{thm:edges}]
Let $y$ be in some fixed complex neighborhood of 1. By combining Lemma~\ref{lem:branchPoint} with Lemma~\ref{lem:PhiSingular} we obtain that there are analytic functions $g, h$ such that locally around $\rho_N(y)$
\begin{equation}
\label{eq:singExpNGeneral}
	N(x,y) = g(x,y) + h(x,y)\left(1 - \frac{x}{\rho_N(y)}\right)^{r},
\end{equation}
where $r = 1/2$ if $\Phi(\rho_N(y), y, N_0(y)) > 0$, and $r = \alpha$ if $\Phi(\rho_N(y), y, N_0(y)) < 0$. By applying the Transfer Theorem (see Corollary VI.1 in~\cite{FlajSed}) we obtain that the probability generating function $p_n(u)$ for the number of edges in $\Nl_n$ satisfies
\[
	p_n(u) = h(x,u)\frac{n^{-r-1}}{\Gamma(r)}\rho_N(u)^{-n} (1 + o(1)).
\]
The proof finishes by applying the Quasi-Powers Theorem (see Theorem IX.8 in~\cite{FlajSed}), and by exploiting Propoerty (B) in the definition of nice classes of networks.
\end{proof}


\section{Sampling Networks} \label{Sec:Samplers}

Although the existence of Boltzmann samplers for several classes of networks is guaranteed by the work of Duchon et al.~\cite{Duch}, we shall
 nevertheless describe them explicitly in this section. This will not only allow us to introduce some necessary notation that will be used in the rest
 of the paper, but it will also make possible to relate the number of 3-cores in networks to certain random decisions performed by the samplers. We
 will make such statements more precise in Section~\ref{ssec:combinatoricsSamplers}.

\subsection{Boltzmann Samplers for Networks}
Using the rules that are described in Section~\ref{ssec:boltzmannSampling} we construct the Boltzmann samplers for several classes of networks.

We begin with the class~$\CN$. Note that any network in~$\CN$ is either an edge, or an~$\CS$-network, or a~$\CP$-network, or a~$\CH$ network. Hence, a Boltzmann sampler for~$\CN$ will call a sampler for a subclass with probability proportional to the value of the generating function of this subclass. More precisely, we say that a variable~$X$ is \emph{network-distributed} with parameters~$x$ and~$y$,~$X\sim\Net(x,y)$, if its domain is the set of symbols~$\Omega_{\Net} = \{e, S, P, H\}$ and for any~$s\in\Omega_\Net$ it holds~$\prob(X = s) = \frac{s(x,y)}{N(x,y)}$. Then the sampler for~$\CN$ can be described concisely as follows.

	\medskip
	\begin{tabular}{lll}
		$\Gamma N(x,y)$ :	&~$s \leftarrow \Net(x,y)$ \\
		& \RETURN~$\Gamma s(x,y)$ \\
	\end{tabular}
	\medskip

Next we describe the sampler for ~ $\CS$. For two networks~$N_1$ and~$N_2$ we will write~$N_1N_2$ for the network that is obtained by identifying the right pole of~$N_1$ with the left pole of~$N_2$. Recall that~$\CS = (e + \CP + \CH) \times \CN$. Hence, $\Gamma S(x,y)$ has to choose among the classes~$e$,~$\CP$, and~$\CH$ with the right probability. More precisely, we say that a variable~$X$ is \emph{series-distributed} with parameters~$x$ and~$y$,~$X\sim\Ser(x,y)$, if its domain is the set of symbols~$\Omega_{\Ser} = \{e, P, H\}$ and for any~$s\in\Omega_\Ser$ it holds~$\prob(X = s) = \frac{s(x,y)}{S(x,y)}$. Then the sampler for $\CS$ is given by the following procedure.

	\medskip
	\begin{tabular}{lll}
		$\Gamma S(x,y)$ :	& $s \leftarrow \Ser(x,y)$ & \\
		& $\ell \leftarrow s(x,y)$ & \\
		& $r \leftarrow N(x,y)$ & \\
		& \RETURN $\ell r$, relabeling randomly its vertices \\
	\end{tabular}
	\medskip

In the sequel we describe the sampler for $\CP$. A parallel network either consists of an edge and a set of at least one $\CS$ or $\CH$ network ($= \Set_{\ge 1}(\CS + \CH)$), or it consists of a set of at least two $\CS$ or $\CH$ networks ($= \Set_{\ge 2}(\CS + \CH)$). This implies that $\Gamma P(x,y)$ has to first to choose one of the two possibilities with the right probability, and then to sample a set (with a given lower bound on the number of elements) from $\CS + \CH$ according to the Boltzmann distribution.

Let us introduce some notation before we describe formally the sampler. We say that a  variable $X$ is \emph{parallel-distributed} with parameters $x$
and $y$, and write $X\sim\Par(x,y)$, if $X \sim 1 + \Be(\frac{e^{S(x,y) + H(x,y)}-1-S(x,y)-H(x,y)}{P(x,y)})$. Finally, we say that a variable is 
\emph{sh-distributed} with parameters $x$ and $y$, $X\sim\sh(x,y)$, if its domain is the set of symbols $\Omega_{\sh} = \{S, H\}$ and for
$s\in\Omega_{\sh}$ it holds $\BP(X = s) = \frac{s(x,y)}{S(x,y) + H(x,y)}$. Then $\Gamma P$ works as follows.

	\medskip
	\begin{tabular}{lll}
		$\Gamma P(x,y)$ :	& $p \leftarrow \Par(x,y)$ & \\
		& $k \leftarrow \mathtt{Po}_{\ge p}(S(x,y)+H(x,y))$ \vspace{1mm} &\\
		& \FOR $i=1\dots k$ &\\
		& \quad $b_i \leftarrow \sh(x,y) $ & \\
		& \quad $p_i \leftarrow \Gamma b_i(x,y)$ \vspace{1mm} & \\
		& construct a network $P$ by identifying the left and right poles of $p_1, \dots, p_k$ & \\
		& relabel randomly the vertices of $P$ \vspace{1mm} & \\
		& \IF $p = 1$ \THEN \RETURN $P$, where the poles are joined by an edge & \\
		&\ELSE \RETURN $P$ & \\
	\end{tabular}
	\medskip

Finally, we describe the sampler for $\CH$. Recall that a $\CH$-network is obtained by substituting the edges of some graph from $\CT$ by graphs from
$\CN$. Here we will assume that we have an auxiliary sampler $\Gamma T(x,y)$, which samples graphs from $\CT$ according to the Boltzmann 
distribution. Then the sampler for $\CH$ can be described as follows.

	\medskip
	\begin{tabular}{lll}
		$\Gamma H(x,y)$ :	& $T \leftarrow \Gamma T(x, N(x,y))$ \vspace{1mm} &\\ 
		& \FOREACH edge $e$ of $T$ &\\ 
		& $\quad \gamma_e \leftarrow \Gamma N(x,y)$ \vspace{1mm} & \\ 
		& replace every $e$ in $T$ by $\gamma_e$  & \\
		& \RETURN $T$, relabeling randomly its vertices
	\end{tabular}
	\medskip

This completes the definitions of all samplers that we shall exploit. From the theory developed in Duchon et al.~\cite{Duch} 
we immediately obtain the following statement.
\begin{lemma}
\label{lem:boltzmannSamplers}
Let $N\in\CN$ such that $N$ has $n$ labeled vertices and $m$ edges. Then $\BP(\Gamma N(x,y) = N) = \frac{x^ny^{e}}{n!N(x,y)}$. The same statement is also true for networks in the classes $\CS, \CP, \CH$ and the corresponding samplers given above.
\end{lemma}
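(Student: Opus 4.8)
The plan is to recognize Lemma~\ref{lem:boltzmannSamplers} as an instance of the general Boltzmann-sampler calculus of Duchon et al.~\cite{Duch}: I would show that each of the four procedures $\Gamma N,\Gamma S,\Gamma P,\Gamma H$ is precisely the sampler produced by applying the standard composition rules for the operators $+$, $\times$, $\Set_{\ge k}$, and $\circ_e$ to the system of combinatorial identities of Lemma~\ref{lem:decomposition}, and then conclude by induction on the (finite and uniquely determined) decomposition of a network. The bulk of the work is the matching; the inductive wrapper is routine once the matching is in place.

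First I would recall the relevant composition rules and why they preserve the property ``$\BP(\Gamma\G(x,y)=\gamma)=x^{v(\gamma)}y^{e(\gamma)}/(v(\gamma)!\,G(x,y))$''. For $\G=\A+\B$ one tosses a biased coin selecting $\A$ with probability $A(x,y)/G(x,y)$ and calls the corresponding sub-sampler; for a labeled product $\G=\A\times\B$ (followed by a random relabeling) one calls the two sub-samplers independently, merges, and relabels uniformly; for $\G=\Set_{\ge k}(\A)$ one draws a size $j$ from a Poisson law of parameter $A(x,y)$ conditioned on $\{j\ge k\}$ and makes $j$ independent calls to the sampler for $\A$; and for a substitution $\G=\A\circ_e\B$ one first samples the outer object from the sampler for $\A$ \emph{evaluated at} $(x,B(x,y))$ and then replaces each of its edges by an independent $\B$-sample. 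In each case a short computation — using that $x^{v(\gamma)}y^{e(\gamma)}/v(\gamma)!$ is multiplicative over labeled products, that the multinomial factors counting how labels are distributed over a product or set cancel the factorials, and that the normalizing constant $e^{A(x,y)}-\sum_{i<k}A(x,y)^i/i!$ of the conditioned Poisson law is exactly the tail appearing in the egf of $\Set_{\ge k}(\A)$ — shows that the composed sampler is again a Boltzmann sampler whenever the sub-samplers are; this is the content of~\cite{Duch}, see also~\cite{FlajSed}.

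Next I would check the correspondence line by line. The procedure $\Gamma N$ realizes the disjoint union $\CN=e+\CS+\CP+\CH$, the edge contributing egf $y$, so it is returned with probability $y/N(x,y)$ as required. The procedure $\Gamma S$ realizes $\CS=(e+\CP+\CH)\times\X\times\CN$: the symbol $s\sim\Ser(x,y)$ handles the internal union $e+\CP+\CH$, the $\X$-factor is the new labeled vertex created when the right pole of the first network is identified with the left pole of the second, and $r\leftarrow\Gamma N(x,y)$ samples the trailing network. The procedure $\Gamma P$ realizes $\CP=e\times\Set_{\ge 1}(\CS+\CH)+\Set_{\ge 2}(\CS+\CH)$: by Lemma~\ref{lem:egfsNetworks} one has $\BP(p=1)=y(e^{S+H}-1)/P$ and $\BP(p=2)=(e^{S+H}-1-S-H)/P$, which are exactly the ratios of the two summand egf's to $P(x,y)$, then $k\sim\mathtt{Po}_{\ge p}(S+H)$ draws the number of blocks and the loop samples each block from $\CS+\CH$ via $b_i\sim\sh(x,y)$ followed by $\Gamma b_i$. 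Finally $\Gamma H$ realizes the substitution $\CH=\CT\circ_e\CN$: the outer sampler $\Gamma T$ is invoked at $(x,N(x,y))$, which is legitimate since $H(x,y)=T(x,N(x,y))$ by~\eqref{eq:4th}, and each edge is then replaced by an independent call to $\Gamma N(x,y)$.

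To turn this into a proof I would use that every network admits a \emph{unique} decomposition according to the inductive rules of Section~\ref{Sec:Nets}; this is Tutte's structural theorem~\cite{Tutte,Traht}, and uniqueness is exactly why those rules restrict, e.g., the first constituent of a series network to be of type $e$, $S$, or $H$ but never $P$. One then inducts on the number of leaves of the decomposition tree: the base case is the single edge (handled above), and in the inductive step the top-level decomposition of a target $\gamma$ of type $S$, $P$, or $H$ exhibits constituents $\gamma_1,\gamma_2,\dots$ with strictly smaller trees, each output by the appropriate sub-sampler with its Boltzmann probability by induction; multiplying by the probability of the top-level random choices and invoking the identities of Lemma~\ref{lem:egfsNetworks} together with the multiplicativity/cancellation facts above yields $\BP(\Gamma s(x,y)=\gamma)=x^{v(\gamma)}y^{e(\gamma)}/(v(\gamma)!\,s(x,y))$, and summing over the four types gives the statement for $\CN$. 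I expect the only delicate points to be pure bookkeeping — checking that the random relabelings combined with the multinomial label-distribution coefficients collapse to the clean factor $1/v(\gamma)!$ rather than a spurious combinatorial constant, and that the conditioned-Poisson normalizer matches the exponential tail in the egf of $\Set_{\ge k}$. Both are precisely what the framework of~\cite{Duch} is built to absorb, so in the write-up I would carry out one representative case (the $\CP$ sampler) in full detail and state that the remaining cases are analogous.
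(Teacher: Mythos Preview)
Your proposal is correct and follows the same approach as the paper: the paper simply states that the lemma follows ``from the theory developed in Duchon et al.~\cite{Duch}'' and gives no further argument, whereas you spell out in detail why the four procedures match the standard Boltzmann composition rules for $+$, $\times$, $\Set_{\ge k}$, and $\circ_e$ applied to the system in Lemma~\ref{lem:decomposition}. Your inductive wrapper and the bookkeeping checks you flag are exactly what is implicit in that citation.
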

Using the definition of networks we obtain readily the following property.
\begin{lemma}
\label{cor:GammaNIsEfficient}
Let $\CN(\CT)$ be an $\alpha$-nice class of networks such that $\alpha \in \mathbb{R}\setminus\{0\}$.
Let 
$$\beta = \left\{\begin{array}{ll} 
5/2     & \mbox{, if $\Phi_z (\rho_N,1, N_0) > 0 $}\\
\alpha & \mbox{, if $\Phi_z (\rho_N,1, N_0) < 0 $}
\end{array}  \right. .
$$
Then there is a $b > 0$ such that
$\BP(\Gamma N(\rho_N,1) \in \CN_n) \sim bn^{-\beta}.$
\end{lemma}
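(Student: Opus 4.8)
The plan is to rewrite the probability in question as a coefficient of the generating function $N(x,1)$, and then to extract its asymptotics by singularity analysis, using the singular expansion of $N$ that is already available (for $y$ near $1$) from Lemmas~\ref{lem:branchPoint} and~\ref{lem:PhiSingular} — the very input exploited in the proof of Theorem~\ref{thm:edges}.

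First I would perform the reduction to coefficient asymptotics. By Lemma~\ref{lem:boltzmannSamplers}, for every network $M\in\CN$ with $n$ labelled vertices and $m$ edges the sampler $\Gamma N(\rho_N,1)$ returns $M$ with probability $\rho_N^{n}\cdot1^{m}/(n!\,N(\rho_N,1))=\rho_N^{n}/(n!\,N_0)$, which is independent of $m$. Summing over all networks with exactly $n$ labelled vertices gives
\[
	\BP\bigl(\Gamma N(\rho_N,1)\in\CN_n\bigr)=\frac{\rho_N^{n}}{N_0}\cdot\frac{|\CN_n|}{n!}=\frac{\rho_N^{n}}{N_0}\,[x^n]N(x,1),
\]
so that the statement is equivalent to showing $[x^n]N(x,1)\sim c\,\rho_N^{-n}n^{-\beta}$ for a constant $c>0$; one then takes $b=c/N_0$.

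Next I would invoke the analytic structure of $N$. Setting $y=1$ in the results of Section~\ref{sec:singAnalysis}, the function $N(x,1)$ has a unique dominant singularity at $\rho_N$ and admits there a local representation $g(x)+h(x)\bigl(1-x/\rho_N\bigr)^{r}$, with $g,h$ analytic at $\rho_N$ and $h(\rho_N)\neq0$: in the regime $\Phi_z(\rho_N,1,N_0)>0$ this is provided by Lemma~\ref{lem:branchPoint} (a branch point of $\Phi$ at an interior analytic point), and in the regime $\Phi_z(\rho_N,1,N_0)<0$ by Lemma~\ref{lem:PhiSingular} through the transfer statement Theorem~\ref{thm:singTransferGeneral} (the singularity of $N$ being inherited from that of $T$, its exponent governed by the parameter $k/m$ of Definition~\ref{Nice}). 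In either case the same lemmas also guarantee that $N(x,1)$ is analytic in a $\Delta$-domain anchored at $\rho_N$ and that $r\notin\mathbb{N}_0$, so the Transfer Theorem (Corollary~VI.1 of~\cite{FlajSed}) applies and yields $[x^n]N(x,1)=\frac{h(\rho_N)}{\Gamma(-r)}\rho_N^{-n}n^{-r-1}(1+o(1))$, where $r+1$ is precisely the exponent $\beta$ of the statement, by the same case distinction as in the proof of Theorem~\ref{thm:edges}. The leading constant $h(\rho_N)/\Gamma(-r)$ is positive because $[x^n]N(x,1)\ge0$, and substituting back into the identity above completes the proof with $b=h(\rho_N)/(N_0\,\Gamma(-r))$.

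Since all of the hard analytic work — localising the dominant singularity, establishing $\Delta$-analyticity, and pinning down the singular exponent in each regime — has already been carried out in Lemmas~\ref{lem:branchPoint} and~\ref{lem:PhiSingular}, the present statement is essentially a corollary of that section. The only point that genuinely requires attention is verifying that the leading non-analytic term of $N(x,1)$ does not degenerate, i.e.\ that $h(\rho_N)\neq0$; this is exactly the content of the normalisation $h(x_0,y_0)>0$ built into Lemma~\ref{lem:branchPoint} in the first regime and of the non-vanishing of the amplitude $\tilde h(x_0,y_0)$ in Theorem~\ref{thm:singTransferGeneral} (which underlies Lemma~\ref{lem:PhiSingular}) in the second.
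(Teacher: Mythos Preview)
Your proposal is correct and follows essentially the same route as the paper, whose proof is a single sentence citing the singular expansion~\eqref{eq:singExpNGeneral} (which packages the conclusions of Lemmas~\ref{lem:branchPoint} and~\ref{lem:PhiSingular}) at $y=1$ together with the Transfer Theorem; your write-up is a faithful and careful expansion of exactly this argument, including the reduction $\BP(\Gamma N(\rho_N,1)\in\CN_n)=\rho_N^{n}[x^n]N(x,1)/N_0$ via Lemma~\ref{lem:boltzmannSamplers}.

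One minor caveat worth flagging: the Transfer Theorem applied to an expansion with singular exponent $r$ yields coefficient asymptotics of order $n^{-r-1}$, so with $r=1/2$ (subcritical) and $r=\alpha$ (critical) you obtain $\beta=3/2$ and $\beta=\alpha+1$, respectively. Your sentence ``$r+1$ is precisely the exponent $\beta$ of the statement'' therefore does not literally match the printed values $5/2$ and $\alpha$ in the lemma. This is an inconsistency in the paper's statement (compare also~\eqref{eq:NetsCount}, where the exponent $\alpha+1$ is used) rather than a flaw in your argument; the downstream applications only require a polynomial bound $O(n^{\beta})$ on $1/\BP(\Gamma N(\rho_N,1)\in\CN_n)$, so the precise constant in the exponent is immaterial there.
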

\begin{proof}
The proof follows from \ref{eq:singExpNGeneral} for the special case $y = 1$ and the Transfer Theorem (Corollary VI.1 in~\cite{FlajSed}).
\end{proof}
\subsection{Combinatorics of the Samplers}
\label{ssec:combinatoricsSamplers}

Although the Boltzmann samplers are randomized algorithms, we are going to analyze them as if they were deterministic algorithms. In particular, 
we are going to assume that all their parameters are read from lists, which contain independent random samples of the appropriate parameters.


Let $\ell_\Net$ be an infinite list of values from $\Omega_\Net$.
Similarly, $\ell_\Ser$, $\ell_\Par$ and $\ell_\sh$ denote infinite lists of values from $\Omega_\Ser$, $\Omega_\Par$ and $\Omega_\sh$, respectively, and $\ell_j$ list of integers $\ge j$ for $j\in \{1,2\}$. Finally, let $\ell_\CT$ be a sequence of graphs from $\CT$.

Note that if we choose the values in the lists according to the corresponding probability distributions (i.e., the values in $\ell_\Net$ according to independent network-distributed variables with parameters $x$ and $y$, the values in $\ell_\Ser$ according to independent series-distributed variables, \dots), then the probability that the deterministic counterparts of the samplers generate a network is equal to the probability in the Boltzmann model.

Using this observation, the proofs of our main results proceed according to the following schema.
\begin{enumerate}
	\item Relate properties of a network generated by a sampler to properties of the values in the lists $\ell_\Net, \ell_\Ser, \ell_\Par, \ell_\sh, \ell_{1}, \ell_{2}$ and $\ell_\CT$.
	\item Show that the desired properties are observed with high probability, if the values in the lists are chosen independently according to the corresponding probability distributions.
\end{enumerate}
The next two statements take care of step (1). More precisely, the following statement gives us the number of 3-cores in a network generated by $\Gamma N$. Its proof is straightforward and therefore omitted.
\begin{lemma}
\label{lem:countk}
Suppose that the network $N$ was generated by $\Gamma N$ by using the first $a_\CT$ graphs in $\ell_\CT$. Then
\[
	c(k; N) = \left|\left\{1 \le i \le a_\CT ~\big|~ v(\ell_\CT[i]) = k-2\right\}\right|.
\]
\end{lemma}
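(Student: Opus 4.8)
The plan is to unwind the recursive structure of the samplers $\Gamma N, \Gamma S, \Gamma P, \Gamma H$ and track exactly where the graphs from $\ell_\CT$ are consumed. The key observation is that a new core is created in the output network \emph{if and only if} the sampler $\Gamma H$ is invoked, and each such invocation reads exactly one graph from $\ell_\CT$ (via the line $T \leftarrow \Gamma T(x, N(x,y))$). Moreover, the core thus created is precisely the graph $\ell_\CT[i]$ that was read, since replacing each edge of $T\in\CT$ by a network in the $\CH$-sampler does not alter the 3-connected ``skeleton'': by definition of the core network decomposition, the core of an $H$-network built on $T$ is $T$ itself, and the substitution operations do not merge or split cores. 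Conversely, no core of a network is created by the $e$-, $S$-, or $P$-productions, as those only glue networks in series or in parallel, which are operations that never produce a 3-connected block.

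First I would set up the bookkeeping: enumerate the recursive calls made during the execution of $\Gamma N$ that generated $N$, and observe that the calls to $\Gamma H$ are in bijective correspondence with the indices $1 \le i \le a_\CT$, where $a_\CT$ is the number of graphs from $\ell_\CT$ consumed in total. For each such call, the graph $\ell_\CT[i]$ read from the list becomes a core of $N$ with exactly $v(\ell_\CT[i]) + 2$ vertices — the ``$+2$'' accounting for the two poles, which are unlabeled vertices of the network but become genuine vertices of the core once we identify the network's poles with labeled vertices (cf.\ the convention that a core with $\ell$ vertices corresponds to $\bar T$ with $\ell-2$ labeled vertices). Second, I would argue the converse inclusion: every core of $N$ arises this way, by induction on the recursive structure. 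A core of $N$ lies in some maximal $H$-subnetwork encountered during the sampling, and the substitutions applied inside that $H$-network (replacing edges of $T$ by networks $\gamma_e$) only add cores that lie strictly inside the $\gamma_e$'s — which are themselves handled by deeper recursive calls — and leave the top-level core equal to $T = \ell_\CT[i]$ untouched.

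Combining these two directions, a core of $N$ has $k$ vertices precisely when the corresponding graph $\ell_\CT[i]$ satisfies $v(\ell_\CT[i]) = k-2$, which yields
\[
	c(k; N) = \left|\left\{1 \le i \le a_\CT ~\big|~ v(\ell_\CT[i]) = k-2\right\}\right|
\]
as claimed. The only mildly delicate point — and the reason the authors call the proof ``straightforward'' rather than trivial — is verifying that the core decomposition really behaves as claimed under the $\circ_e$ substitution, i.e.\ that substituting networks into the edges of a 3-connected graph $T$ does not create additional 3-connected cores beyond $T$ and those already contained in the substituted networks, and does not destroy $T$ as a core. This follows from the uniqueness of Tutte's decomposition of biconnected graphs into 3-connected components, but one should be careful about the edge cases (a network $\gamma_e$ consisting of a single edge, or of a parallel/series network), since these are exactly the situations where cores could in principle merge. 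Once that structural fact is in hand, the counting identity is immediate, so I would expect this verification to be the main (and essentially only) obstacle.
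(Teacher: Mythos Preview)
Your proposal is correct and is precisely the natural argument; the paper in fact omits the proof entirely, declaring it ``straightforward and therefore omitted.'' Your unwinding of the recursive calls --- that each invocation of $\Gamma H$ consumes exactly one entry of $\ell_\CT$, that this entry becomes a core with $v(\ell_\CT[i])+2$ vertices (the $+2$ accounting for the poles), and that the $e$-, $S$-, and $P$-productions create no new cores --- is exactly the content the authors had in mind, and your caution about verifying that edge-substitution neither creates nor destroys cores (via uniqueness of Tutte's decomposition) is the right place to put the emphasis.
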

The next claim gives us some general relations among the values used by the sampler, which will be very useful later. The proof can be found in Section~\ref{ssec:combinatoricsSamplers}, and can be performed by looking closer at how the sampler constructs a large network out of smaller ones. We denote by $\mathbf{1}(E)$ the indicator variable for the event $E$, i.e., $\mathbf{1}(E) = 1$ if $E$ occurs, and $\mathbf{1}(E) = 0$ otherwise.
\begin{lemma}
\label{lem:relations}
Suppose that the network $N$ was generated by $\Gamma N$ by using the first $A_s$ values in $\ell_s$, where $s \in \{\Net, \Ser, \Par, \sh, 1, 2, \CT\}$. Then the following statements are true.
\begin{eqnarray} 
v(N) &=& A_\Ser + \sum_{1\le i\le A_\CT} v(\ell_\CT[i]), \label{eq:nSvT} \\
e(N) &=& \sum_{1\le i\le A_\Net} \mathbf{1}(\ell_\Net[i] = e) + \sum_{1\le i\le A_\Ser} \mathbf{1}(\ell_\Ser[i] = e) + 
            \sum_{1\le i\le A_\Par}\mathbf{1}(\ell_\Par[i] = 1),  \label{eq:edges}   \\
 A_\Net &=& 1+ A_\Ser + \sum_{1\le i \le A_\CT} e(\ell_\CT[i]), \label{eq:aNaSeT} \\
A_j  &=&  \sum_{1\le i\le A_\Par}\mathbf{1}(\ell_\Par[i] = j), \label{eq:aj} \\
A_\Par &=& \sum_{1\le i\le A_\Net} \mathbf{1}(\ell_\Net[i] = P) + \sum_{1\le i\le A_\Ser} \mathbf{1}(\ell_\Ser[i] = P). \label{eq:aP} 
\end{eqnarray}
Moreover,
\begin{eqnarray}
	A_\sh &=& \sum_{1\le i\le A_1} \ell_1[i] + \sum_{1\le i\le A_2} \ell_2[i], \label{eq:ash}\\
	A_\Ser &=& \sum_{1\le i\le A_\Net} \mathbf{1}(\ell_\Net[i] = S) + \sum_{1\le i\le A_\sh} \mathbf{1}(\ell_\sh[i] = S), \label{eq:aS} \\
	A_\CT &=& \sum_{1\le i\le A_\Net} \mathbf{1}(\ell_\Net[i] = H) + \sum_{1\le i\le A_\Ser} \mathbf{1}(\ell_\Ser[i] = H) 
+ \sum_{1\le i\le A_\sh} \mathbf{1}(\ell_\sh[i] = H). \label{eq:aT}
\end{eqnarray}
\end{lemma}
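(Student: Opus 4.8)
The statement to prove is Lemma~\ref{lem:relations}, which asserts a collection of combinatorial identities relating the quantities $A_s$ (number of values consumed from list $\ell_s$) and $v(N)$, $e(N)$ when a network $N$ is generated by the deterministic version of $\Gamma N$.

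\medskip

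The plan is to prove all identities simultaneously by induction on the recursive structure of the generation process, i.e., on the total number of recursive sampler calls made (equivalently, on $v(N) + e(N)$, or on the size of the call tree). The base case is when $\Gamma N$ draws $e$ from $\ell_\Net$ and returns a single edge: then $A_\Net = 1$, $A_\Ser = A_\Par = A_\sh = A_1 = A_2 = A_\CT = 0$, $v(N) = 0$, $e(N) = 1$, and one checks each of the seven displayed equations holds trivially (the sums on the right are empty or pick up exactly the one term $\mathbf{1}(\ell_\Net[1] = e)$).

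\medskip

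For the inductive step I would split into the four cases according to the first symbol $s_0 = \ell_\Net[1] \in \{e, S, P, H\}$ read by $\Gamma N$; the case $s_0 = e$ is the base case. In each of the remaining three cases the sampler delegates to $\Gamma S$, $\Gamma P$, or $\Gamma H$, which in turn make a bounded number of recursive calls to $\Gamma N$ (and, for $\Gamma P$, to $\Gamma S$ and $\Gamma H$, which themselves reduce to $\Gamma N$-calls) on strictly smaller networks $N_1, N_2, \dots$. The key bookkeeping observation is that the lists are read in a globally consistent order, so the multiset of indices consumed from each list $\ell_s$ by the whole run is the disjoint union of the segments consumed by the sub-runs, plus the finitely many values read directly at the top level (for $\Gamma N$: one value from $\ell_\Net$; for $\Gamma S$: one from $\ell_\Ser$; for $\Gamma P$: one from $\ell_\Par$, one from $\ell_j$ for the chosen $j$, and one from $\ell_\sh$ per component; for $\Gamma H$: one graph from $\ell_\CT$). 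Hence every quantity $A_s$ and every sum $\sum \mathbf{1}(\ell_s[i] = \cdot)$ decomposes additively over the children, and each identity follows by adding up the corresponding identities for the children and accounting for the top-level contributions. For instance, \eqref{eq:aNaSeT} follows because each $\Gamma H$-call reads one graph $G$ from $\ell_\CT$ and then makes exactly $e(G)$ further $\Gamma N$-calls (one per edge of $G$), which is precisely the ``$+1$'' bootstrap of the top-level $\Gamma N$-call plus $\sum_i e(\ell_\CT[i])$; identity \eqref{eq:nSvT} follows because labeled vertices are created only by $\Gamma S$ (one per call, via the factor $\mathcal X$ in $\CS = (e+\CP+\CH)\times\mathcal X\times\CN$) and by the interiors of the $\CT$-graphs substituted in $\Gamma H$; and \eqref{eq:edges}, \eqref{eq:aj}, \eqref{eq:aP}, \eqref{eq:ash}, \eqref{eq:aS}, \eqref{eq:aT} are read off directly from which list each symbol-count is fed from in the pseudocode.

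\medskip

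I do not expect a genuine obstacle here: the content is entirely bookkeeping, and the paper itself flags the proof as routine (``can be performed by looking closer at how the sampler constructs a large network out of smaller ones''). The one point requiring a little care is making the ``disjoint union of consumed segments'' claim precise — one must fix an order in which the recursive calls are executed (e.g., the order in which they appear in the pseudocode, with children fully evaluated before moving on) and argue that with this convention each list is scanned left-to-right with no gaps, so that $A_s$ really does count a prefix of $\ell_s$ and the prefixes for sibling sub-runs are consecutive and non-overlapping. Once that convention is fixed, all seven identities drop out of the induction by straightforward addition, and I would present the argument compactly by treating \eqref{eq:nSvT}--\eqref{eq:aP} together and then \eqref{eq:ash}--\eqref{eq:aT} together, in each case displaying the top-level contribution and invoking the inductive hypothesis on the children.
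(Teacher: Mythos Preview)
Your proposal is correct and covers the same bookkeeping content as the paper. The paper organizes it slightly differently: rather than a single structural induction, it gives for each equation a direct bijection argument (e.g., for \eqref{eq:nSvT} it maps every labeled vertex to the call where it was first created, observing that only $\Gamma S$ and $\Gamma H$ create vertices; for \eqref{eq:aNaSeT} it maps every non-initial call of $\Gamma N$ to the $\Gamma S$- or $\Gamma H$-call that invoked it). Your induction unfolds to exactly these bijections, so the two arguments are equivalent; the paper's per-equation mappings are marginally more direct, while your induction handles the ``disjoint consumed segments'' issue more explicitly.
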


\section{Cores In Random Networks}
\label{sec:CoresInRandomNets}
In our proofs we will be using repeatedly the Chernoff bound on the probability that a binomially distributed random 
variable deviates significantly from its expected value. We will use the version which appears in~\cite{JLR}. 
In particular, let $X$ be a binomially distributed random variable and let $t>0$.  Then 
\begin{equation} \label{eq:Chernoff}
\prob (|X - \ex(X)| > t) \leq 2 \exp \left(- {t^2 \over 2(\ex(X) + t/3)} \right).
\end{equation}
Another technical ingredient in our proof is the following lemma. It gives a Chernoff-type bound for the sum of independent $\mathtt{Po}_{\ge j}(\mu)$
variables, and its proof uses exponential generating functions. For the sake of completeness, it can be found  in Section~\ref{ssec:proofsSystem}.
\begin{lemma}
\label{lem:poissonConditional}
Let $X_1, \dots, X_r$ be independent $\mathtt{Po}_{\ge j}(\mu)$ variables, where $\mu >0$ and $j \in \{1,2\}$. Let $Y_{r'} = \sum_{1 \le i \le r'}X_i$, where $0\le r'\le r$. Then, there is a $C>0$ such that for any $\frac{\log r}{\sqrt{r}}\le \eps \le 1$ and sufficiently large $r$
\[
	\Pr{\exists \ 0\le r'\le r~:~\left|Y_{r'} - \E{Y_{r'}}\right| \ge \eps r} \le e^{-C\eps^2r}.
\]
\end{lemma}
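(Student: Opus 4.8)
The plan is to treat this as a maximal-deviation bound and reduce it to two ingredients: a one-sided tail bound for a single $Y_{r'}$ obtained from a generating-function (Chernoff/Bernstein) argument tailored to truncated Poisson summands, together with a union bound over the $r+1$ partial sums, both of which are cheap because of the $\eps \ge (\log r)/\sqrt r$ hypothesis. First I would fix $r' \le r$ and estimate $\E{e^{t(X_i - \E{X_i})}}$ for a single summand $X_i \sim \mathtt{Po}_{\ge j}(\mu)$. The egf of a $\mathtt{Po}_{\ge j}(\mu)$ variable has a clean closed form: for $j=1$ its probability generating function is $\frac{e^{\mu u}-1}{e^\mu - 1}$, and for $j=2$ it is $\frac{e^{\mu u}-1-\mu u}{e^\mu-1-\mu}$; substituting $u = e^t$ gives the moment generating function. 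For the regime $|t|$ small (which is all we need, since the optimal $t$ will be of order $\eps$) one expands $\log \E{e^{tX_i}} = \mu_j t + \tfrac12 \sigma_j^2 t^2 + O(t^3)$ where $\mu_j, \sigma_j^2$ are the mean and variance of the truncated Poisson; the key point is that $\sigma_j^2$ is bounded by a constant depending only on $\mu$ (which is itself a fixed parameter here). Hence $\log \E{e^{t(X_i-\mu_j)}} \le c_1 t^2$ for $|t| \le t_0$, for constants $c_1, t_0$ depending only on $\mu$.

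Next, by independence, $\log \E{e^{t(Y_{r'} - \E{Y_{r'}})}} \le c_1 r' t^2 \le c_1 r t^2$, and Markov's inequality gives $\Pr{Y_{r'} - \E{Y_{r'}} \ge \eps r} \le \exp(-t\eps r + c_1 r t^2)$; optimizing over $t \in [0,t_0]$ — legitimate since the optimizer $t = \eps/(2c_1)$ lies in $[0,t_0]$ once $\eps \le \eps_0 := 2c_1 t_0$, which holds as $\eps \le 1$ after possibly shrinking — yields $\Pr{Y_{r'} - \E{Y_{r'}} \ge \eps r} \le \exp(-\eps^2 r / (4c_1))$. The same bound holds for the lower tail by symmetry of the argument (replace $t$ by $-t$). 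A union bound over all $0 \le r' \le r$ then gives $(r+1)\exp(-\eps^2 r/(4c_1))$, and since $\eps \ge (\log r)/\sqrt r$ implies $\eps^2 r \ge (\log r)^2 \gg \log(r+1)$, the factor $r+1$ is absorbed: for $r$ large enough, $(r+1)\exp(-\eps^2 r/(4c_1)) \le \exp(-\eps^2 r/(8c_1))$, giving the claim with $C = 1/(8c_1)$.

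The only genuinely delicate point is the uniform control of the mgf of the truncated Poisson summand — specifically, verifying that $\sigma_j^2$ and the cubic remainder in the expansion of $\log \E{e^{tX_i}}$ are bounded in terms of $\mu$ alone and that the radius $t_0$ within which the quadratic bound holds does not degenerate; for $j=2$ this requires checking that the denominator $e^\mu - 1 - \mu$ stays bounded away from $0$, which it does for any fixed $\mu > 0$. Everything else is the standard Chernoff/union-bound machinery, and the hypothesis $\eps \ge (\log r)/\sqrt r$ is exactly what makes the union bound over $r+1$ events harmless. (Alternatively, one could skip the maximal-deviation refinement entirely if only a single $r'$ were needed, but since later applications invoke the bound simultaneously over all prefixes of a list, the union bound is kept.)
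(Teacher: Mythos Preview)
Your proposal is correct and follows essentially the same approach as the paper: an exponential-moment (Chernoff) bound for a single partial sum of truncated-Poisson variables, based on the explicit probability generating function $\frac{e^{\mu u}-1}{e^\mu-1}$ (respectively $\frac{e^{\mu u}-1-\mu u}{e^\mu-1-\mu}$), followed by a union bound over $r'$ that the hypothesis $\eps \ge (\log r)/\sqrt r$ renders harmless. The only cosmetic difference is that the paper works with the pgf variable $\xi$ and Taylor-expands the function $f(\xi)=\log\frac{e^{\xi\mu}-1}{e^\mu-1}-(1+\eps)\E{X}\log\xi$ around $\xi=1$ via an explicit bound on $f''$, whereas you parametrize by $t$ (so $\xi=e^t$) and invoke the cumulant expansion; these are the same computation in different coordinates.
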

In the following we denote by~$A_s$ the random variable counting the number of values from~$\ell_s$ used by an execution~$\Gamma N(\rho_N (y),y)$, where~$s \in \{\Net, \Ser, \Par, \sh, 1, 2, \CT\}$. Moreover,we write~$V_\CT$ and~$E_\CT$ for the total number of labeled vertices and edges in all cores of~$\Gamma N(\rho_N (y),y)$, i.e.,
\[
	V_\CT = \sum_{i=1}^{A_\CT} v(\ell_\CT[i]) = \sum_k(k-2)c(k;\, \Gamma N(\rho_N (y),y)),
	\quad\text{ and }\quad
	E_\CT = \sum_{i=1}^{A_\CT} e(\ell_\CT[i]).
\]
Finally, let us denote by~$\Nl$ the output of the Boltzmann sampler~$\Gamma N (\rho_N,1)$. 
\newcommand{\balpha}{\mathbf{\alpha}}
\begin{lemma} \label{lem:system}
Let~$0 < \eps <1$. There is a constant~$C>0$ such that for sufficiently large~$n$ and any~$Z\in\{A_\Net, A_\Ser, A_\Par,  V_\CT, E_\CT\}$
\[
	\Pr{|Z - zn|\le \eps n ~|~ \Nl \in \CN_n} \ge 1 - e^{-C\eps^2 n},
\]
and~$z \in \{a_\Net, a_\Ser, a_\Par,  v_\CT, e_\CT \}$, where 
$\balpha = [a_\Net, a_\Ser,  a_\Par,  v_\CT, e_\CT ]^T$ is the unique solution of the linear system~$M\alpha = r$, with
\begin{equation}
\label{eq:system}
	M
	=
\left[ \begin{array}{ccccc} 
{1\over N_0} & {\rho_N N_0 \over S_0} & {N_0-1\over 2P_0} & 0 & 0 \\ 
0 & 1 & 0 & 1 & 0 \\ 
{S_0\over N_0}& -1 & {S_0N_0 \over P_0} & 0 & 0 \\ 
{P_0 \over N_0} & {\rho_NP_0N_0 \over S_0} & -1 & 0  & 0\\ 
-1 & 1 & 0 & 0 & 1 
\end{array} \right],
	r
	=
	\left[ \begin{array}{c} \mu \\ 1 \\ 0 \\ 0 \\ 0 \end{array} \right]
	~\text{, and }~
	\mu = -\frac{\rho'_N(1)}{\rho_N(1)}.
\end{equation}
\end{lemma}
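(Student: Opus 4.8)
The plan is to set up a system of equations for the \emph{expected} values of the counts $A_\Net,A_\Ser,A_\Par,V_\CT,E_\CT$, conditioned on $\Nl\in\CN_n$, and then upgrade the expectation statements to concentration via the Chernoff-type tools already available (namely~\eqref{eq:Chernoff} and Lemma~\ref{lem:poissonConditional}). The starting point is Lemma~\ref{lem:relations}, which expresses each of these quantities as a sum of independent indicator or integer random variables drawn from the lists $\ell_\Net,\ell_\Ser,\ell_\Par,\ell_\sh,\ell_1,\ell_2,\ell_\CT$ used during an execution of $\Gamma N(\rho_N,1)$. The key observation is that a single value in, say, $\ell_\Net$ equals $e$, $S$, $P$, or $H$ with probability $y/N_0$, $S_0/N_0$, $P_0/N_0$, $H_0/N_0$ respectively (evaluated at $x=\rho_N$, $y=1$), and similarly for the other lists; also each value of $\ell_\CT$ contributes on average $-T_z(\rho_N,N_0)/T_0$-type quantities for vertices and edges. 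Substituting these per-element expectations into the identities \eqref{eq:nSvT}--\eqref{eq:aT} and using Wald's identity (the number of terms $A_s$ is a stopping time with respect to the list being summed) gives a closed linear system relating $\ex(A_\Net),\dots,\ex(E_\CT)$ — after dividing through by $n$ and using that $\ex(v(N)\mid \Nl\in\CN_n)=n$ and $\ex(e(N))\approx\mu n$ with $\mu=-\rho_N'(1)/\rho_N(1)$ from Theorem~\ref{thm:edges}, this is exactly $M\alpha=r$. I would verify row by row that the five rows of $M$ and the right-hand side $r$ correspond, in order, to \eqref{eq:nSvT} (combined with the vertex normalization), \eqref{eq:nSvT} again in the form $A_\Ser+V_\CT$-type relation, \eqref{eq:aS}, \eqref{eq:aP}, and \eqref{eq:aNaSeT} (the edge-counting identity), with the $H$-network counts $A_\CT$ eliminated in favor of $V_\CT,E_\CT$ via the fact that each core contributes its own vertices and edges.

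The invertibility of $M$ — which is what makes the solution $\balpha$ unique — should follow from a determinant computation; I would expect the determinant to factor through a quantity equivalent to $\Phi_z(\rho_N,1,N_0)$ or to the derivative $\rho_N'(1)$, and its non-vanishing is exactly what Property~(B) of an $\alpha$-nice class (Definition~\ref{Nice}) guarantees. This is where the seemingly technical assumption (B) earns its keep, so I would make the connection explicit: the $2\times2$ block in the lower right corner and the structure of the first column conspire so that $\det M\neq0$ iff the quantity in Definition~\ref{Nice}(B) is nonzero.

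For the concentration step, the point is that each $Z$ is a sum of $A_s$ independent bounded (in fact $\{0,1\}$, except for the $\ell_\CT$ and $\ell_j$ terms) random variables, where $A_s=\Theta(n)$ with high probability conditioned on $\Nl\in\CN_n$. I would first show each $A_s$ concentrates: this is bootstrapped from the relations in Lemma~\ref{lem:relations} together with \eqref{eq:Chernoff} applied to the indicator sums, and Lemma~\ref{lem:poissonConditional} applied to the $\mathtt{Po}_{\ge j}$ contributions feeding into $A_\sh$ and hence $A_\Ser,A_\CT$. Once the $A_s$ are pinned down to within $\eps n$, a further application of \eqref{eq:Chernoff} (with the number of summands frozen at its typical value, then taking a union bound over the $O(n)$ possible values of $A_s$) gives $|Z-zn|\le\eps n$ with the claimed probability $1-e^{-C\eps^2n}$. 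The conditioning on $\Nl\in\CN_n$ costs only a polynomial factor $n^{O(1)}$ in the failure probability, by Lemma~\ref{cor:GammaNIsEfficient} which says $\BP(\Gamma N(\rho_N,1)\in\CN_n)\sim bn^{-\beta}$; this factor is absorbed into the exponential bound after shrinking $C$.

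The main obstacle is the circularity in the recursive sampler: $\Gamma N$ calls $\Gamma S,\Gamma P,\Gamma H$, which in turn call $\Gamma N$ again, so the ``number of terms'' $A_s$ for one list depends on values drawn from the others, and a naive Wald argument does not immediately apply because the stopping time is not adapted to a single filtration. Resolving this cleanly — either by a fixed-point argument on the vector of expectations (showing the map defined by one pass of the sampler has $\balpha$ as its unique fixed point, which is where uniqueness via $\det M\neq0$ really matters) or by a careful martingale-type argument exposing the list values in the order the sampler consumes them — is the delicate part and, I expect, the reason the authors phrase the result in terms of a linear system rather than deriving the constants directly. The concentration argument must be run simultaneously for all the $A_s$ and all the $Z$'s (a vector-valued large-deviation statement), since they are mutually dependent; I would handle this by a single induction on the ``depth'' of the recursion truncated at depth $O(\log n)$, absorbing the deeper contributions into the error term.
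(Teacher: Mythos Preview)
Your overall strategy --- translate Lemma~\ref{lem:relations} into approximate linear relations among the $A_s$'s, use Chernoff/Lemma~\ref{lem:poissonConditional} for concentration, absorb the conditioning cost via Lemma~\ref{cor:GammaNIsEfficient}, and invoke Theorem~\ref{thm:edges} for the edge count --- matches the paper's. But two of your specific claims are off, and one of them is exactly the point where your argument would get stuck.

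\textbf{The determinant.} Your guess that $\det M\neq 0$ comes from Property~(B) of Definition~\ref{Nice} is incorrect. The paper computes the determinant explicitly as $D=\rho_N N_0^2+(\rho_N+1)N_0+1$, which is positive simply because $N(x,1)$ has non-negative coefficients (so $\rho_N,N_0>0$). Property~(B) plays no role here; it is used only in the proof of Theorem~\ref{thm:edges}, to guarantee that the variance in the Quasi-Powers Theorem is non-degenerate. Also note that rows~2 and~5 of the system (from \eqref{eq:nSvT} and \eqref{eq:aNaSeT}) are \emph{deterministic} identities $n=A_\Ser+V_\CT$ and $A_\Net=A_\Ser+E_\CT$, holding with probability~1 on the event $\Nl\in\CN_n$; no concentration is needed for those rows.

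\textbf{The circularity.} You correctly identify the mutual dependence of the $A_s$'s as the apparent obstacle, but your proposed cures (Wald's identity, a fixed-point argument on expectations, or induction on recursion depth) are much heavier than what is actually needed --- and the Wald route genuinely runs into the adaptedness problem you flag. The paper sidesteps the circularity entirely with a single trick: for a relation such as \eqref{eq:aj}, one does not first pin down $A_\Par$ and then apply Chernoff. Instead one bounds
\[
\Pr{\exists\,L\ge 1:\ \sum_{i=1}^{L}\mathbf{1}(\ell_\Par[i]=j)\notin(1\pm\eps)p_jL\pm\eps n},
\]
a union bound over \emph{all} possible values of the upper summation limit. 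Each term is controlled by Chernoff (or Lemma~\ref{lem:poissonConditional} for \eqref{eq:ash}), and the additive slack $\pm\eps n$ makes the bound trivial once $L$ is small. This yields, with probability $\ge 1-e^{-C\eps^2 n}$, a system of \emph{approximate} linear relations of the form $A_j\in(1\pm\eps)p_jA_\Par\pm\eps n$, etc., holding simultaneously; no expectation computation, no bootstrapping, no depth truncation. Combined with $e(\Nl)\in(1\pm\eps)\mu n$ from Theorem~\ref{thm:edges}, one obtains $(1\pm\eps)M\vec{X}=r\pm\eps\vec{c}$ for the vector $\vec{X}=\frac1n[A_\Net,A_\Ser,A_\Par,V_\CT,E_\CT]^T$, and since $M$ is invertible this gives $\vec{X}=\balpha\pm O(\eps)$ by elementary linear algebra.
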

\begin{proof}
First, by applying Lemma~\ref{lem:relations}, Statements \eqref{eq:nSvT} and \eqref{eq:aNaSeT}, we obtain that for every point in the conditional probability space ``$\Nl \in \CN_n$''
\[
	n = A_\Ser + V_\CT
	\enspace \text { and }\enspace
	A_\Net = A_\Ser + E_\CT,
\]
from which we immediately obtain that
$
	1 = \frac{A_\Ser}n + \frac{V_\CT}n
$
and
$
	\frac{A_\Net}n = \frac{A_\Ser}n + \frac{E_\CT}n.
$
These two facts are the second and the last line of the linear system above, and thus hold with probability~1. In the remainder we argue that all other equations are true with probability at least $1 - e^{-C\eps^2n}$. This completes the proof of the lemma with the following reasoning. The determinant of $M$ equals
\[
	D = \rho_NN_0^2+(\rho_N+1)N_0+1.
\]
Now, since $N(x)$ has only non-negative coefficients, we infer that $D \neq 0$ and the proof is finished.

Let us continue with an auxiliary observation. Consider e.g.\ Statement \eqref{eq:aj} in Lemma~\ref{lem:relations}. Our aim is to translate this statement into a~\emph{high probability statement} for the relation of the random variables $A_j$ and $A_\Par$. For this, let $p_j = \Pr{\Par(\rho_N,1) = j}$ and note by applying Lemma~\ref{cor:GammaNIsEfficient} we infer that there is a $c>0$ such that for large $n$
\[
	\Pr{A_j \not\in (1\pm\eps)p_jA_\Par \pm \eps n ~|~ \Nl \in \CN_n}
	\le cn^\beta \Pr{A_j \not\in (1\pm\eps)p_jA_\Par \pm \eps n}.
\]
By applying \eqref{eq:aj} we obtain $A_j = \sum_{i=1}^{A_\Par}\mathbf{1}(\ell_\Par[i] = j)$. Using this, we infer that the probability of the above event is at most
\[
	cn^\alpha\Pr{\exists L\ge 1: ~ \sum_{i=1}^{L}\mathbf{1}(\ell_\Par[i] = j) \not\in (1\pm\eps)p_j L \pm \eps n}
\]
and a simple union bound together with the Chernoff bounds imply that there is a $C>0$ such that
\[
	\Pr{A_j \not\in (1\pm\eps)p_jA_\Par \pm \eps n ~|~ \Nl \in \CN_n} \le e^{-C\eps^2n}.
\]
In other words, we have demonstrated that~\eqref{eq:aj} translates into 
the statement
\begin{equation}
\label{eq:aj_conc}
	A_j \in (1\pm\eps)p_jA_\Par \pm \eps n
\end{equation}
with probability $\ge 1 - e^{-C\eps^2n}$. Now, with exactly the same line of reasoning we can infer from~\eqref{eq:edges} and~\eqref{eq:aP} that if we condition on ``$\Nl \in \CN_n$'', with probability at least $1 - e^{-C\eps^2n}$
\begin{eqnarray}
e(\Nl) & \in & (1\pm\eps)\left(\frac1{N_0}A_\Net + \frac{\rho_N N_0}{S_0}A_\Ser + \frac{e^{S_0+H_0}-1}{P_0}A_\Par\right) \pm\eps n, \label{eq:edges_conc} \\
A_\Par & \in &  (1\pm\eps)\left(\frac{P_0}{N_0}A_\Net + \frac{\rho_NP_0N_0}{S_0}A_\Ser\right) \pm\eps n. \label{eq:aP_conc}
\end{eqnarray}
Moreover, by applying Lemma~\ref{lem:poissonConditional} instead of the Chernoff bounds we infer from~\eqref{eq:ash} that with probability at least $1 - e^{-C\eps^2n}$
\begin{equation}
A_\sh \in (1\pm\eps)\left(\frac{(S_0+H_0)e^{S_0+H_0}}{e^{S_0+H_0}-1}A_1 + \frac{(S_0+H_0)(e^{S_0+H_0}-1)}{e^{S_0+H_0} - 1 - S_0-H_0}A_2\right) \pm\eps n. \label{eq:ash_conc}
\end{equation}
Finally, from~\eqref{eq:aS} and~\eqref{eq:aT} we infer again by the Chernoff bounds that with probability at least $1 - e^{-C\eps^2n}$
\begin{eqnarray}
A_\Ser & \in &  (1\pm\eps)\left(\frac{S_0}{N_0}A_\Net + \frac{S_0}{S_0+H_0}A_\sh\right) \pm\eps n, \label{eq:aS_conc} \\
A_\CT & \in &  (1\pm\eps)\left(\frac{H_0}{N_0}A_\Net + \frac{\rho_NH_0N_0}{S_0}A_\Ser + \frac{H_0}{S_0+H_0}A_\sh\right) \pm\eps n. \label{eq:aT_conc}
\end{eqnarray}
Moreover, Theorem~\ref{thm:edges} implies that there is a $B>0$ such that
\[
	\Pr{e(\Nl) \in (1\pm\eps)\mu n ~|~ \Nl \in \CN_n} \ge 1 - e^{-B\eps^2 n}.
\]
Let $\vec{X} = \frac1n[A_\Net, A_\Ser, A_\Par,  V_\CT, E_\CT]^T$. By combining all the above facts together, and using the fact $N(x,y) = y + (1+y)(e^{S(x,y)+H(x,y)}-1)$, which follows from Lemma~\ref{lem:egfsNetworks}, we infer that with probability at least $1 - e^{-C''\eps^2n}$ there is a $c>0$ such that
\[
	(1+\eps) M \cdot \vec{X} \ge r - \eps \cdot \vec{c}
	\quad \text{ and }\quad
	r + \eps \cdot \vec{c} \ge (1-\eps) M \cdot \vec{X}
\]
where $\vec{c} = [c,c,c,c,c]^T$. The proof completes by elementary linear algebra algebra, and by choosing the $\eps$ in this proof as $\eps/c'$, for a suitable $c'>0$.
\end{proof}
Note that the above statement does not yield any information about the total number of cores in a random network with $n$ vertices. However, with a little additional work we arrive at the following result.
\begin{corollary} \label{cor:a_T} 
Let $0< \eps< 1$. Then, there is a $C >0$ such that
\[
	\BP[|A_\CT - a_\CT n| \ge \eps a_\CT n ~|~ \Nl \in \CN_n] \le e^{-C\eps^2 a_\CT n},
\]
where $a_\CT = 2\mu T(\rho_N, N_0)$ and $\mu = -\frac{\rho_N'(1)}{\rho_N(1)}$.
\end{corollary}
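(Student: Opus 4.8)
The plan is to deduce the concentration of $A_\CT$ from Lemma~\ref{lem:system} together with the relation~\eqref{eq:aT_conc} from its proof, but with a crucial twist: the right-hand side of~\eqref{eq:aT_conc} carries an additive error of $\pm \eps n$, which is useless if $a_\CT n = o(n)$ — and this is exactly the regime we must worry about, since $a_\CT$ could in principle be small. So the first thing I would check is whether $a_\CT$ is bounded away from $0$ by an absolute constant, or whether we genuinely need a sharper error term. In fact $a_\CT = 2\mu T(\rho_N, N_0)$ is a fixed positive constant depending only on the class, so the statement ``$|A_\CT - a_\CT n|\le \eps a_\CT n$ with probability $1 - e^{-C\eps^2 a_\CT n}$'' is, up to renaming the constant $C$, equivalent to ``$|A_\CT - a_\CT n|\le \eps' n$ with probability $1-e^{-C'\eps'^2 n}$'' for a suitable rescaling $\eps' = \eps a_\CT$. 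Thus the content is really just: identify $a_\CT$ explicitly and show $A_\CT/n \to a_\CT$ with exponential concentration.

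The key steps, in order, are as follows. First, from Lemma~\ref{lem:relations}, Statement~\eqref{eq:aT}, we have $A_\CT = \sum_{i\le A_\Net}\mathbf 1(\ell_\Net[i]=H) + \sum_{i\le A_\Ser}\mathbf 1(\ell_\Ser[i]=H) + \sum_{i\le A_\sh}\mathbf 1(\ell_\sh[i]=H)$. Second, exactly as in the derivation of~\eqref{eq:aT_conc} in the proof of Lemma~\ref{lem:system} — conditioning on $\Nl\in\CN_n$, paying the polynomial factor $cn^\beta$ from Lemma~\ref{cor:GammaNIsEfficient}, applying a union bound and the Chernoff bound~\eqref{eq:Chernoff} to each of the three sums — we get that with probability $\ge 1 - e^{-C\eps^2 n}$,
\[
	A_\CT \in (1\pm\eps)\left(\tfrac{H_0}{N_0}A_\Net + \tfrac{\rho_N H_0 N_0}{S_0}A_\Ser + \tfrac{H_0}{S_0+H_0}A_\sh\right) \pm \eps n.
\]
Third, I would invoke Lemma~\ref{lem:system} to replace $A_\Net, A_\Ser$ by $a_\Net n, a_\Ser n$ up to $\pm\eps n$, and note that $A_\sh$ is controlled by~\eqref{eq:ash_conc} in terms of $A_1, A_2$, which in turn are controlled by~\eqref{eq:aj_conc} in terms of $A_\Par \approx a_\Par n$; chaining these gives $A_\sh \in (1\pm\eps) a_\sh n \pm \eps n$ for a constant $a_\sh$ expressible through the entries of $\balpha$. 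Plugging in yields $A_\CT = a_\CT n + O(\eps n)$ for some constant $a_\CT$, with the stated failure probability.

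The last step is the bookkeeping that identifies this constant as $a_\CT = 2\mu T(\rho_N, N_0)$. Here I would argue directly rather than through the messy linear combination above. Observe that each core $\ell_\CT[i]$ is a graph from $\CT$ with $v(\ell_\CT[i])$ vertices and $e(\ell_\CT[i])$ edges, and a $3$-connected graph satisfies $e = \Theta(v)$, but more to the point: the expected number of edges of a random $\bar T$-network equals, by the singular expansion, a computable constant, and by Lemma~\ref{lem:relations} Statement~\eqref{eq:aNaSeT}, $A_\Net = 1 + A_\Ser + E_\CT$, so combining $n = A_\Ser + V_\CT$ with the already-established concentrations of $A_\Net, A_\Ser, V_\CT, E_\CT$ around $a_\Net n, a_\Ser n, v_\CT n, e_\CT n$ pins down $V_\CT/n \to v_\CT$ and $E_\CT/n\to e_\CT$. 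Finally $A_\CT$, the number of cores, relates to $V_\CT = \sum_i v(\ell_\CT[i])$ via the fact that the $v(\ell_\CT[i])$ are i.i.d.\ copies of the vertex count of a Boltzmann-sampled $\bar T$-network at parameter $(\rho_N, N_0)$, whose mean is $\rho_N \cdot \partial_x \log(\text{egf of }\bar T)\big|_{(\rho_N,N_0)}$; a short computation using $H(x,y)=T(x,N(x,y))$ and $\mu = -\rho_N'(1)/\rho_N(1)$ collapses this to $a_\CT = 2\mu T(\rho_N, N_0)$.

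I expect the main obstacle to be precisely this last identification — threading the egf identities from Lemma~\ref{lem:egfsNetworks} (in particular $H(x,y)=T(x,N(x,y))$ and the relation $\partial_y B = \tfrac{x^2}{2}\tfrac{1+N}{1+y}$) together with the definition of $\mu$ to recognize the otherwise opaque constant as $2\mu T(\rho_N,N_0)$, and making sure the factor $2$ (which comes from the $x^2$, i.e.\ the two poles being unlabeled in a network but labeled when we pass to the core as a graph, contributing a shift of $-2$ in $V_\CT$ versus the true vertex count) is tracked correctly. The probabilistic part — the Chernoff/union-bound machinery — is entirely parallel to what is already done inside the proof of Lemma~\ref{lem:system} and should be routine.
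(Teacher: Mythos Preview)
Your probabilistic argument (your first three steps) is correct and is precisely the paper's approach: solve the system in Lemma~\ref{lem:system} to obtain $a_\Net$, $a_\Ser$, $a_\Par$ (and hence a value $a_\sh$ via the chain~\eqref{eq:aj_conc}--\eqref{eq:ash_conc}), then plug these into~\eqref{eq:aT_conc}. The paper carries out the final identification of the constant exactly through this ``messy linear combination'', remarking only that elementary algebraic manipulation yields $a_\CT = 2\mu\, T(\rho_N, N_0)$.

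Your proposed shortcut for that identification, however, has a genuine gap. You want to write $V_\CT = \sum_{i\le A_\CT} v(\ell_\CT[i])$ and invoke a law of large numbers to conclude $v_\CT = a_\CT \cdot m_V$ with $m_V = \rho_N T_x(\rho_N, N_0)/T(\rho_N, N_0)$. In the subcritical case the $v(\ell_\CT[i])$ have exponential tails and this works. But in the supercritical case $\rho_N = \rho_T(N_0)$, so the distribution of each $v(\ell_\CT[i])$ has a power-law tail $\sim k^{-\alpha-1}$, and a single giant summand of order $n$ destroys any concentration of the sum around $A_\CT \cdot m_V$. Indeed, Lemma~\ref{lem:uniquelarge} later establishes $\gamma_\CT = v_\CT - a_\CT m_V > 0$, which \emph{directly contradicts} the identity $v_\CT = a_\CT m_V$ your shortcut would produce. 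So in the supercritical regime you have no choice but to go through~\eqref{eq:aT_conc} and the explicit solution of the system --- which is what the paper does anyway, and what you yourself outlined before opting for the shortcut.
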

\begin{proof} 
By solving~\eqref{eq:system} we obtain explicit (but lengthy) expressions for the highly probable values of $A_\Net$, $A_\Ser$, and $A_\sh$. Then, by using~\eqref{eq:aT_conc} we obtain after elementary algebraic manipulations the claimed statement.
\end{proof}


\section{Proof of Theorems~\ref{Main:Subcritical} and~\ref{Main:Critical}} \label{Sec:Proofs}

In this section we perform the proof of our main results. We shall denote throughout by $\Nl_n$ a network drawn uniformly at random from $\CN_n$ and by $\Nl$ a graph generated by $\Gamma N (\rho_N,1)$. 
\subsection{Small Cores}
As a first application of Corollary~\ref{cor:a_T} we prove that the counts of ``small'' cores in $\Nl_n$ are sharply concentrated around a specific value. Before we proceed let us make an auxiliary observation that will be used several times. Let $\CN(\CT)$ be an $\alpha$-nice class of networks. Let $\CB_n \subseteq \CN_n$ be some (bad) property of networks, and let $N\in\CN_n$. Moreover, let $A_\CT = A_\CT(N)$ be the total number of cores in $N$. As the random network $\Nl$ has the same chance of being any $N\in\CN_n$ we obtain by applying Corollary~\ref{cor:a_T} that there is a $C'>0$ such that
\[
	\BP(\Nl_n \in \CB_n)
	=
	\BP(\Nl \in \CB_n ~|~\Nl \in \CN_n)
	\le
	\BP(\Nl \in \CB_n, \, A_{\CT} \in (1\pm \eps/2)  a_{\CT} n ~|~\Nl \in \CN_n) + e^{-C'\eps^2 n},
\]
where $a_\CT = -2\frac{\rho_N'(1)}{\rho_N(1)}T(\rho_N, N_0)$. Applying Lemma~\ref{cor:GammaNIsEfficient}, we deduce that 
\begin{equation}
\label{eq:intermediate}
	\BP(\Nl_n \in \CB_n)
	\le
	O(n^{\beta})\cdot \BP(\Nl \in \CB_n, \, A_{\CT} \in (1\pm \eps/2)  a_{\CT} n ) + e^{-C'\eps^2 n}.
\end{equation}
The parameter $\beta$ is as in Lemma~\ref{cor:GammaNIsEfficient}. 
In particular, $\beta = 5/2$ if $\Phi_z(\rho_N,1,N_0)>0$, and $\beta = \alpha$ otherwise. This is a fact that we shall use below several times.
We start by counting cores of a given size.
\begin{lemma} \label{lem:counts}
Let $\CN(\CT)$ be an $\alpha$-nice class of networks for some $\alpha\in\mathbb{R}\setminus\{0\}$, and let $0 < \eps < 1$. Let $a_\CT$ be defined as in Corollary~\ref{cor:a_T}, $\beta$ as in Lemma~\ref{cor:GammaNIsEfficient}, and define for $k\ge 3$ the quantities
\begin{equation}
\label{eq:p_k}
	p_k := \frac{[x^{k-2}]T(x, N_0) \cdot \rho_N^{k-2}}{T(\rho_N, N_0)}
	\text{ and }
	k_0 := \max\{\ell ~|~ p_\ell n \geq 17\eps^{-2}a_\CT^{-1}(\beta + 1)\log n\}.
\end{equation}
Then there exists an constant $C= C(\CN) > 0$ such that for large $n$ and $k\leq k_0$  
$$ \prob (  c(k;\Nl_n)  \in \left(1\pm \eps \right) a_{\CT} p_k n ) \geq 1 -  e^{- C{\eps^2 p_k n}}.$$
\end{lemma}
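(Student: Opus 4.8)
The plan is to express $c(k;\,\Nl_n)$ in terms of the random lists driving the Boltzmann sampler and then combine the concentration of the total number of cores (Corollary~\ref{cor:a_T}) with a Chernoff estimate. First I would record a formula for $c(k;\,\Nl)$, where $\Nl$ is the output of $\Gamma N(\rho_N,1)$: inspecting the samplers, every call to $\Gamma T$ issued during a run of $\Gamma N(\rho_N,1)$ has parameters $(\rho_N,\,N(\rho_N,1))=(\rho_N,N_0)$ — the first coordinate is never modified, and the second stays equal to $1$ except inside $\Gamma H$, where it becomes $N(x,y)=N_0$. Hence the entries of $\ell_\CT$ are i.i.d.\ graphs drawn from $\CT$ under the Boltzmann distribution with parameters $(\rho_N,N_0)$, so by definition of that distribution $\Pr{v(\ell_\CT[i])=k-2}=\rho_N^{\,k-2}\,[x^{k-2}]T(x,N_0)/T(\rho_N,N_0)=p_k$. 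By Lemma~\ref{lem:countk} this gives $c(k;\,\Nl)=\sum_{i=1}^{A_\CT}\mathbf 1(v(\ell_\CT[i])=k-2)$, a sum of $A_\CT$ i.i.d.\ Bernoulli$(p_k)$ indicators; crucially, the \emph{number} of terms $A_\CT$ is itself random and is correlated with $\ell_\CT$ (through the edge counts, cf.\ \eqref{eq:aNaSeT}, \eqref{eq:aT}).

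Next I would reduce to an unconditional computation. Writing $\CB_n=\{N\in\CN_n:\ c(k;N)\notin(1\pm\eps)a_\CT p_k n\}$ and applying the observation~\eqref{eq:intermediate} (which already folds in Corollary~\ref{cor:a_T} and $\Pr{\Nl\in\CN_n}=\Theta(n^{-\beta})$ from Lemma~\ref{cor:GammaNIsEfficient}), it suffices to bound $\Pr{\Nl\in\CB_n,\ A_\CT\in(1\pm\eps/2)a_\CT n}$ up to a factor $O(n^{\beta})$ and an additive $e^{-C'\eps^2 n}$. On the event $\{A_\CT\in(1\pm\eps/2)a_\CT n\}$ one has $A_\CT=L$ for some integer $L$ in that range, and then $c(k;\Nl)=S_L:=\sum_{i=1}^{L}\mathbf 1(v(\ell_\CT[i])=k-2)$; hence this event is contained in $\bigcup_{L\in(1\pm\eps/2)a_\CT n}\{S_L\notin(1\pm\eps)a_\CT p_k n\}$. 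This is the manoeuvre that neutralises the correlation between $A_\CT$ and $\ell_\CT$: each $S_L$ is a genuine $\Bin(L,p_k)$ variable under the unconditioned measure, and I only pay a union bound over the $O(n)$ admissible values of $L$. For fixed such $L$, $\E{S_L}=Lp_k\in(1\pm\eps/2)a_\CT p_k n$, so $S_L\notin(1\pm\eps)a_\CT p_k n$ forces $|S_L-\E{S_L}|>(\eps/2)a_\CT p_k n$, and \eqref{eq:Chernoff} gives $\Pr{S_L\notin(1\pm\eps)a_\CT p_k n}\le 2\exp(-c_0\eps^2 a_\CT p_k n)$ for an absolute constant $c_0>0$.

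Putting the pieces together yields $\Pr{\Nl_n\in\CB_n}\le O(n^{\beta+1})\exp(-c_0\eps^2 a_\CT p_k n)+e^{-C'\eps^2 n}$, and it remains to absorb the polynomial factors. Here the cutoff $k_0$ enters: for $k\le k_0$ its definition in~\eqref{eq:p_k} gives $\eps^2 a_\CT p_k n\ge 17(\beta+1)\log n$, and the constant $17$ is tuned (against $c_0$ and the bound $\beta\le 5/2$) so that $c_0\cdot 17>1$ with enough slack to swallow the $n^{\beta+1}$ factor as well as the constant implied by $O(\cdot)$, leaving a bound of the form $e^{-C\eps^2 p_k n}$. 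Finally, since $p_k\le 1$ we have $e^{-C'\eps^2 n}\le e^{-C'\eps^2 p_k n}$, so the two terms merge into a single $e^{-C\eps^2 p_k n}$ after adjusting $C$, which is the assertion of the lemma.

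The hard part is not any single estimate but keeping the final error term exponentially small \emph{in $p_k n$} rather than in $n$ — a genuine constraint, since $p_k n$ is only of order $\log n$ near $k=k_0$. Two polynomial losses accumulate along the way (the $n^{\beta}$ from conditioning on $\Nl\in\CN_n$ and the $n$ from the union over $L$), and $k_0$ is precisely the threshold below which $p_k n$ is still large enough for the Chernoff exponent $\eps^2 a_\CT p_k n$ to dominate $(\beta+1)\log n$; checking that this domination goes through with the stated constants (and hence justifying the exact form of $k_0$) is the one place where real care is needed.
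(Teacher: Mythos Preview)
Your proposal is correct and follows essentially the same route as the paper: define the bad set $\CB_n$, apply~\eqref{eq:intermediate} to pass to the unconditioned sampler with $A_\CT\in(1\pm\eps/2)a_\CT n$, use Lemma~\ref{lem:countk} to write $c(k;\Nl)$ as a binomial sum over the $\ell_\CT$ list, take a union bound over the $O(n)$ values of $L$, apply Chernoff (the paper gets the constant $c_0=1/16$, which is indeed $>1/17$), and absorb the $n^{\beta+1}$ prefactor via the definition of $k_0$. Your write-up is in fact a bit more careful than the paper's in two places: you explain why the entries of $\ell_\CT$ are i.i.d.\ Boltzmann$(\rho_N,N_0)$ samples, and you explicitly note that $e^{-C'\eps^2 n}\le e^{-C'\eps^2 p_k n}$ so the two error terms merge.
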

\begin{proof}
Let $\CB_n \subseteq \CN_n$ be the set of networks whose number of cores with precisely $k$ vertices is not in $\left(1\pm \eps \right) a_{\CT} p_k n$. By applying~\eqref{eq:intermediate} we infer that it sufficient to show that, say, $\BP(\Nl \in \CB_n, \, A_{\CT} \in (1\pm \eps/2)  a_{\CT} n ) \le e^{-C'\eps^2p_k n}$.

Note that Lemma~\ref{lem:countk} asserts that $c(k;\, \Nl) = \sum_{i=1}^{A_\CT} \mathbf{1}(v(\ell_\CT[i])=k-2)$. As every $T\in \CT$ has $k$ vertices iff it has $k-2$ labeled vertices,  we infer also that $p_k = \BP(v(\ell_\CT[i]) =k-2)$. So,
\[
	\BP\left(\Nl \in \CB_n, \, A_{\CT} \in \left(1\pm \frac{\eps}2\right)  a_{\CT} n\right)
	\le
	\BP\left(\exists L\in \left(1\pm \frac\eps2\right)  a_{\CT} n : \sum_{i=1}^{L} \mathbf{1}(v(\ell_\CT[i]) = k-2) \not\in (1\pm\eps)a_\CT p_k n\right).
\]
By the Chernoff bounds and a union bound this probability is, say, at most $2ne^{-\frac{1}{16}\eps^2a_\CT p_kn}$. The proof then completes with~\eqref{eq:intermediate} and the choice of $k_0$ for large $n$.
\end{proof}
With the above Lemma at hand we are ready to prove the first statement in Theorem~\ref{Main:Subcritical} and the third statement in Theorem~\ref{Main:Critical}. Note that the analytic properties of $T$ asserted in Definition~\ref{Nice} imply with the Transfer Theorem (see e.g. Corollary VI.1 in~\cite{FlajSed}) that
\begin{equation}
\label{eq:xk2TxN0}
	[x^{k-2}]T(x, N_0) \sim \frac{t_{\alpha m}(N_0)}{\Gamma(\alpha)}k^{-\alpha-1}\rho_T(N_0)^{-k+2}.
\end{equation}
\begin{proof}[Proof of Theorem~\ref{Main:Subcritical}, (i)]
The condition $\Phi_z(\rho_N,1,N_0)>0$ together with Lemma~\ref{lem:branchPoint} imply that $\rho_N < \rho_T(N_0)$. Moreover, the definition of $p_k$ in~\eqref{eq:p_k} and~\eqref{eq:xk2TxN0} assert that there is a $C>0$ such that
\begin{equation}
\label{eq:pk_subcritical_asympt}
	p_k \sim Ck^{-\alpha-1}\left(\frac{\rho_N}{\rho_T(N_0)}\right)^k = Ck^{-\alpha-1}\tau^k.
\end{equation}
As $\tau < 1$, we infer that we can apply Lemma~\ref{lem:counts} for $k_0 = (1 - \delta)\log_{1/\tau}n$ whenever $n$ is sufficiently large. This completes the proof.
\end{proof}
\begin{proof}[Proof of Theorem~\ref{Main:Critical}, (iii)]
Here, the condition $\Phi_z(\rho_N,1,N_0)<0$ together with Lemma~\ref{lem:PhiSingular} imply that $\rho_N = \rho_T(N_0)$. The definition of $p_k$ in~\eqref{eq:p_k} and~\eqref{eq:xk2TxN0} assert that there is a $C>0$ such that
\begin{equation}
\label{eq:pk_asympototic}
	p_k \sim Ck^{-\alpha-1}\left(\frac{\rho_N}{\rho_T(N_0)}\right)^k = Ck^{-\alpha-1}.
\end{equation}
We infer that we can apply Lemma~\ref{lem:counts} for $k_0 = (\frac{n}{\omega(n) \log n})^{1/(\alpha+1)}$ whenever $n$ is sufficiently large.
\end{proof}
The next lemma deals with cores that contain more than $(\frac{n}{\omega(n) \log n})^{1/(\alpha+1)}$ vertices. The proof is essentially the same as the proof of Lemma~\ref{lem:counts} and hence omitted.
\begin{lemma} \label{lem:countsMany}
Let $\CN(\CT)$ be an $\alpha$-nice class of networks for some $\alpha\in\mathbb{R}\setminus\{0\}$, and let $0 < \eps < 1$. Let $a_\CT$ be defined as in Corollary~\ref{cor:a_T}, $\beta$ as in Lemma~\ref{cor:GammaNIsEfficient}, and define for $k\ge 3$ and $\xi \ge 1$
\begin{equation}
\label{eq:p_kxik}
	p_{k,\xi k} := \sum_{\ell = k}^{\xi k}\frac{[x^{\ell-2}]T(x, N_0) \cdot \rho_N^{\ell-2}}{T(\rho_N, N_0)}
	\text{ and }
	k_0 := \max\{\ell ~|~ p_{\ell, \xi\ell} n \geq 17\eps^{-2}a_\CT^{-1}(\beta + 1)\log n\}.
\end{equation}
Then there exists an constant $C= C(\CN) > 0$ such that for large $n$ and $k\leq k_0$  
$$ \prob (  c(k, \xi k;\,\Nl_n)  \in \left(1\pm \eps \right) a_{\CT} p_{k, \xi k} n ) \geq 1 -  e^{- C{\eps^2 p_{k, \xi k} n}}.$$
\end{lemma}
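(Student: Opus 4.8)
The plan is to repeat the proof of Lemma~\ref{lem:counts} almost verbatim, with the single change that the event ``a core has exactly $k$ vertices'' is replaced throughout by ``a core has between $k$ and $\xi k$ vertices''. First I would record the evident analogue of Lemma~\ref{lem:countk}: if $\Nl$ is generated by $\Gamma N$ using the first $A_\CT$ entries of $\ell_\CT$, then a core built from $\ell_\CT[i]$ has exactly $v(\ell_\CT[i])+2$ vertices, so
\[
	c(k,\xi k;\, \Nl) = \sum_{1\le i\le A_\CT}\mathbf{1}\bigl(k-2 \le v(\ell_\CT[i]) \le \xi k - 2\bigr).
\]
Since the entries of $\ell_\CT$ are independent copies of the output of $\Gamma T(\rho_N, N_0)$, summing Boltzmann probabilities over the edge count and over the range $k-2\le v\le \xi k-2$ gives $\BP\bigl(k-2\le v(\ell_\CT[i])\le \xi k-2\bigr) = p_{k,\xi k}$, with $p_{k,\xi k}$ as in~\eqref{eq:p_kxik}.

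Next, let $\CB_n\subseteq\CN_n$ be the set of networks with $c(k,\xi k;\cdot)\notin(1\pm\eps)a_\CT p_{k,\xi k}n$. By~\eqref{eq:intermediate} it suffices to bound $\BP\bigl(\Nl\in\CB_n,\ A_\CT\in(1\pm\eps/2)a_\CT n\bigr)$. Conditioning on the value of $A_\CT$ and using the identity above, this probability is at most
\[
	\BP\Bigl(\exists\, L\in(1\pm\eps/2)a_\CT n:\ \sum_{1\le i\le L}\mathbf{1}\bigl(k-2\le v(\ell_\CT[i])\le \xi k-2\bigr)\notin(1\pm\eps)a_\CT p_{k,\xi k}n\Bigr).
\]
The summands are i.i.d.\ Bernoulli with mean $p_{k,\xi k}$, so for each fixed $L$ in the stated range the partial sum has expectation in $(1\pm\eps/2)a_\CT p_{k,\xi k}n$ and a deviation of order $\eps a_\CT p_{k,\xi k}n$ is needed to leave the target interval; the Chernoff bound~\eqref{eq:Chernoff} then gives probability at most $2\exp(-\frac{1}{16}\eps^2 a_\CT p_{k,\xi k}n)$ for each such $L$, and a union bound over the $O(n)$ admissible values of $L$ yields $2n\exp(-\frac{1}{16}\eps^2 a_\CT p_{k,\xi k}n)$, exactly as in the proof of Lemma~\ref{lem:counts}.

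Feeding this estimate into~\eqref{eq:intermediate} introduces a polynomial factor $O(n^{\beta})$, which is absorbed precisely because of the cutoff $k_0$: for $k\le k_0$ we have $p_{k,\xi k}n\ge 17\eps^{-2}a_\CT^{-1}(\beta+1)\log n$, so $n^{\beta}\cdot 2n\cdot\exp(-\frac{1}{16}\eps^2 a_\CT p_{k,\xi k}n)\le \exp(-C\eps^2 p_{k,\xi k}n)$ for a suitable $C=C(\CN)>0$ and all large $n$, which together with the $e^{-C'\eps^2 n}$ error term in~\eqref{eq:intermediate} (and the trivial bound $p_{k,\xi k}n\le n$) gives the claimed probability. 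I do not expect any genuine obstacle here; the only point that needs care---and the reason the lemma carries the threshold $k_0$---is to guarantee that $p_{k,\xi k}n$ is at least a large constant times $\log n$, so that the Chernoff saving dominates both the union bound over $L$ and the polynomial loss $n^\beta$ incurred when passing from the Boltzmann model to the uniform model on $\CN_n$ via~\eqref{eq:intermediate}.
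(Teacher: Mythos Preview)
Your proposal is correct and follows exactly the approach the paper intends: the paper omits the proof of Lemma~\ref{lem:countsMany} with the remark that it is ``essentially the same as the proof of Lemma~\ref{lem:counts}'', and what you wrote is precisely that argument with the Bernoulli event $\{v(\ell_\CT[i])=k-2\}$ replaced by $\{k-2\le v(\ell_\CT[i])\le \xi k-2\}$ and $p_k$ replaced by $p_{k,\xi k}$.
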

\begin{proof}[Proof of Theorem~\ref{Main:Critical}, (iv)]
The condition $\Phi_z(\rho_N,1,N_0)<0$ together with Lemma~\ref{lem:PhiSingular} imply that $\rho_N = \rho_T(N_0)$. The definition of $p_{k, \xi k}$ in~\eqref{eq:p_kxik} and~\eqref{eq:xk2TxN0} assert that for $\xi > 1$ there is a $C>0$ such that
\begin{equation}
\label{eq:pkxik}
	p_{k, \xi k} \sim \sum_{\ell = k}^{\xi k} C\ell^{-\alpha-1}
	\sim C k^{-\alpha}(1 - \xi^{-\alpha}).
\end{equation}
We infer that we can apply Lemma~\ref{lem:countsMany} for $k_0 = (\frac{n}{\omega(n) \log n})^{1/\alpha}$ whenever $n$ is sufficiently large.
\end{proof}
We now consider the case $\Phi_z(\rho_N,1,N_0)>0$. The next statement deals with the cases in Theorem~\ref{Main:Subcritical} not covered by Lemma~\ref{lem:counts}.
\begin{lemma}
Let $\CN(\CT)$ be an $\alpha$-nice class of networks for some $\alpha\in\mathbb{R}\setminus\{0\}$, and let $\eps > 0$. Assume that $\Phi_z (\rho_N(y),y, N_0(y))>0$, and let $\tau = {\rho_N \over \rho_T (N_0)}$. Then
$$ \prob \left( C_1 (\Nl_n) > (5/2 + \eps) \log_{1/\tau} n \right) = o(n^{-\eps}).$$
Moreover, we have 
$$ \prob \left( c \left( (1-\eps)\log_{1/\tau} n, {5\over 2} \log_{1/\tau} n;\, \Nl_n\right) > n^{2\eps} \right) =o(1).$$
\end{lemma}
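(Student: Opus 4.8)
The plan is to reduce both statements to the geometric decay of the numbers $p_k$ from Lemma~\ref{lem:counts}. Since $\Phi_z(\rho_N,1,N_0)>0$, Lemma~\ref{lem:branchPoint} gives $\rho_N<\rho_T(N_0)$, hence $\tau<1$, and~\eqref{eq:pk_subcritical_asympt} provides a constant $C>0$ with $p_k\sim C k^{-\alpha-1}\tau^k$ as $k\to\infty$; recall that $\alpha>0$ for any $\alpha$-nice class. We may assume $0<\eps<1$. Set $K:=(5/2+\eps)\log_{1/\tau}n$ and $k_1:=(1-\eps)\log_{1/\tau}n$, so that $\tau^K=n^{-(5/2+\eps)}$ and $\tau^{k_1}=n^{-(1-\eps)}$. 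Since $k\mapsto k^{-\alpha-1}$ is decreasing, for $n$ large (so that the asymptotics for $p_k$ are valid for all $k>K$) one has $\sum_{k>K}p_k=O\!\left(\sum_{k>K}k^{-\alpha-1}\tau^k\right)\le O\!\left(K^{-\alpha-1}\sum_{k>K}\tau^k\right)=O(K^{-\alpha-1}\tau^K)$.

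For the first statement I would bound $\Pr{C_1(\Nl_n)>K}\le\sum_{K<k\le n}\Pr{c(k;\Nl_n)\ge 1}$ and estimate each summand exactly as in the proof of Lemma~\ref{lem:counts}. Fixing $k$, by~\eqref{eq:intermediate} it suffices to bound the probability that $\Nl$ has a core of size $k$ while $A_\CT\le(1+\eps/2)a_\CT n$; on that event Lemma~\ref{lem:countk} together with a union bound over the first $(1+\eps/2)a_\CT n$ entries of $\ell_\CT$ gives a bound of $(1+\eps/2)a_\CT n\,p_k$, so~\eqref{eq:intermediate} yields $\Pr{c(k;\Nl_n)\ge 1}\le O(n^{\beta+1})p_k+e^{-C'\eps^2 n}$, with $\beta$ as in Lemma~\ref{cor:GammaNIsEfficient}. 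Summing over $K<k\le n$ and using the tail estimate above,
\[
  \Pr{C_1(\Nl_n)>K}\le O\!\left(n^{\beta+1}K^{-\alpha-1}\tau^K\right)+n\,e^{-C'\eps^2 n}.
\]
The second term is $o(n^{-\eps})$, and one verifies, using $\tau^K=n^{-(5/2+\eps)}$ together with the exponent $\beta$ of Lemma~\ref{cor:GammaNIsEfficient}, that the first term is $O(n^{-\eps}(\log n)^{-\alpha-1})=o(n^{-\eps})$; the prefactor $5/2$ is exactly what is needed so that $\tau^K$ dominates the polynomial loss $n^{\beta+1}$ incurred by~\eqref{eq:intermediate} and by the concentration of $A_\CT$, while leaving the harmless polylogarithmic factor $K^{-\alpha-1}$ (here $\alpha>0$ is used).

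For the second statement I would invoke Lemma~\ref{lem:countsMany} with $k:=k_1=(1-\eps)\log_{1/\tau}n$ and $\xi:=\tfrac{5/2}{1-\eps}\ge 1$, so that $\xi k=\tfrac52\log_{1/\tau}n$ and $c(k_1,\xi k_1;\,\Nl_n)$ is exactly the quantity in the statement. One first checks $k_1\le k_0$ for the $k_0$ of~\eqref{eq:p_kxik}: since $\tau<1$ the sum defining $p_{k_1,\xi k_1}$ is dominated by its first term, so $p_{k_1,\xi k_1}=\Theta(k_1^{-\alpha-1}\tau^{k_1})$ and $p_{k_1,\xi k_1}n=\Theta(k_1^{-\alpha-1}n^{\eps})$, which exceeds $17\eps^{-2}a_\CT^{-1}(\beta+1)\log n$ once $n$ is large, as $n^{\eps}$ beats any power of $\log n$. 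Lemma~\ref{lem:countsMany} (applied, say, with its parameter equal to $1/2$) then gives, with probability $\ge 1-e^{-c\,p_{k_1,\xi k_1}n}$ for some $c>0$ — a probability tending to $1$ because $p_{k_1,\xi k_1}n\to\infty$ — the bound $c(k_1,\xi k_1;\,\Nl_n)\le 2a_\CT\,p_{k_1,\xi k_1}\,n=O((\log n)^{-\alpha-1}n^{\eps})$, which is below $n^{2\eps}$ for $n$ large. This completes the proof.

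The hard part is the first statement: one must keep track of every polynomial factor — the $O(n^\beta)$ lost on passing from the conditioned model to the Boltzmann model in~\eqref{eq:intermediate}, and the extra $n$ from replacing $A_\CT$ by its typical value $a_\CT n$ — and verify that with $K=(5/2+\eps)\log_{1/\tau}n$ they are all absorbed by $\tau^K$, leaving only the beneficial polylogarithmic factor $K^{-\alpha-1}$. The second statement is by comparison routine: it follows almost directly from Lemma~\ref{lem:countsMany}, the only point being to confirm that the window of core sizes considered still lies in the range $\ell\le k_0$ where that lemma applies.
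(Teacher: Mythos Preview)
Your overall strategy coincides with the paper's: pass to the unconditioned Boltzmann model via~\eqref{eq:intermediate}, restrict to $A_\CT\in(1\pm\eps/2)a_\CT n$, and exploit the geometric decay $p_k\sim Ck^{-\alpha-1}\tau^k$. The paper handles the event ``some core exceeds $K$'' in one shot, bounding $\Pr{\exists\, i\le A_\CT: v(\ell_\CT[i])>K-2}$ and finishing with Markov's inequality; you instead bound $\Pr{c(k;\Nl_n)\ge 1}$ for each $k$ and then sum over $k>K$, which is the same calculation rearranged. For the second statement the paper only says ``follows similarly'' and leaves the details to the reader; your appeal to Lemma~\ref{lem:countsMany} is a correct and clean way to fill them in.

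There is, however, an arithmetic slip in your final check for the first statement. In the subcritical case $\beta=5/2$, so the polynomial loss is $n^{\beta+1}=n^{7/2}$, while $\tau^K=n^{-(5/2+\eps)}$; hence your first term is
\[
O\!\left(n^{\beta+1}K^{-\alpha-1}\tau^K\right)
= O\!\left(n^{7/2}\cdot(\log n)^{-\alpha-1}\cdot n^{-(5/2+\eps)}\right)
= O\!\left(n^{\,1-\eps}(\log n)^{-\alpha-1}\right),
\]
not $O(n^{-\eps}(\log n)^{-\alpha-1})$ as you write. The decay $\tau^K$ compensates the factor $n^\beta=n^{5/2}$ coming from~\eqref{eq:intermediate}, but \emph{not} the additional factor $n$ from the union bound over the $\Theta(n)$ entries of $\ell_\CT$; so your assertion that ``the prefactor $5/2$ is exactly what is needed so that $\tau^K$ dominates the polynomial loss $n^{\beta+1}$'' is off by one power of $n$. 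The paper does not display the resulting exponent either (it stops at ``completes with Markov's inequality''), and carrying its argument through yields the same $o(n^{1-\eps})$ rather than the stated $o(n^{-\eps})$.
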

\begin{proof}
Let $\CB_n \subset \CN_n$ be the set of networks in which the largest core has size $> (5/2 + \eps) \log_{1/\tau} n$. By applying~\eqref{eq:intermediate} we obtain
\[
	\BP(\Nl_n \in \CB_n)
	\le
	O(n^{5/2})\cdot \BP(\Nl \in \CB_n, \, A_{\CT} \in (1\pm \eps/2)  a_{\CT} n ) + e^{-C'\eps^2 n}.
\]
Note that Lemma~\ref{lem:countk} asserts that
$$
\Nl \in \CB_n \implies \sum_{i=1}^{A_\CT} \mathbf{1}\left(v(\ell_\CT[i])> (5/2 + \eps) \log_{1/\tau} n - 2\right) ~>~ 0.
$$
As $\BP(v(\ell_\CT[i]) = k) = \frac{[x^{k-2}]T(x, N_0)]\cdot \rho_N^{k-2}}{T(\rho_N, N_0)}$ we obtain by using~\eqref{eq:xk2TxN0} that there is a $C>0$ such that for large $n$
\[
	\BP\left(v(\ell_\CT[i])> (5/2 + \eps) \log_{1/\tau} n - 2\right)
	\le C \sum_{k > (5/2 + \eps) \log_{1/\tau} n - 2} k^{-\alpha-1}\tau^k.
\]
The condition $\Phi_z(\rho_N,1,N_0)>0$ together with Lemma~\ref{lem:branchPoint} imply that $\rho_N < \rho_T(N_0)$, and hence $\tau < 1$. Thus, $\BP\left(v(\ell_\CT[i])> (5/2 + \eps) \log_{1/\tau} n\right) = o(n^{-5/2-\eps})$. The proof of the first statement completes with Markov's inequality.

The second statement follows similarly by estimating the probability
\[
	\BP\left((1-\eps)\log_{1/\tau} n \le v(\ell_\CT[i])\le (5/2 + \eps) \log_{1/\tau} n\right)
\]
with the same technique as above. The straightforward details are left to the reader.
\end{proof}

\subsection{The Largest Core}

In this section we prove the first two statements in Theorem~\ref{Main:Critical}. In particular, we show the following.
\begin{lemma}\label{lem:uniquelarge}
Let $\CN(\CT)$ be an $\alpha$-nice class of networks for some $\alpha\in\mathbb{R}\setminus\{0\}$, and let $\eps > 0$. Assume that $\Phi_z (\rho_N(y),y, N_0(y))<0$, and let $\omega(n)$ be a function such that $\lim_{n\to\infty} \omega(n) = \infty$. Then, a.a.s.
$$\big| C_1(\Nl_n) -  \gamma_{\T}n  \big| < \eps n,$$
where 
$\gamma_{\T} := v_{\T} - \alpha_{\T} {\rho_N T_x (\rho_N, N_0)\over T(\rho_N, N_0)}$, and $v_\CT$ is as in Lemma~\ref{lem:system} and $a_\CT$ as in Corollary~\ref{cor:a_T}. Moreover, there are a.a.s. no other cores in $\Nl_n$ with more than $\omega(n) \cdot n^{1/\alpha}$ vertices.
\end{lemma}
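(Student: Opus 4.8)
The plan is to read off the size of the giant core as a difference of two quantities that are already (almost) under control — the total number $V_\CT$ of labelled vertices in \emph{all} cores, from Lemma~\ref{lem:system}, and the number lying in \emph{small} cores, from Lemma~\ref{lem:counts} — and then to rule out any core of intermediate size by a sharp, \emph{uniform} first-moment estimate obtained enumeratively, bypassing the samplers. Concretely, let $M_1:=k_0$ be the threshold of Lemma~\ref{lem:counts}; since $p_k\sim Ck^{-\alpha-1}$ in the critical case (by~\eqref{eq:pk_asympototic}) we have $M_1=\Theta\big((n/\log n)^{1/(\alpha+1)}\big)$. Write $V_{\le M}:=\sum_{3\le k\le M}(k-2)c(k;\Nl_n)$ and $V_{>M}:=V_\CT-V_{\le M}$ for the number of labelled vertices in cores of size $\le M$ resp.\ $>M$. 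Lemma~\ref{lem:system} gives $V_\CT\in(1\pm\eps)v_\CT n$ a.a.s., and a union bound (over the at most $n$ values $k\le M_1$) applied to Lemma~\ref{lem:counts} gives $c(k;\Nl_n)\in(1\pm\eps)a_\CT p_kn$ for all $k\le M_1$ simultaneously, a.a.s. Since $\alpha>1$ whenever $\Phi_z(\rho_N,1,N_0)<0$ (otherwise $\Phi_z(\rho_N,1,N_0)=+\infty$), the series $E:=\sum_{k\ge3}(k-2)p_k=\rho_N T_x(\rho_N,N_0)/T(\rho_N,N_0)$ converges, with tail $\sum_{k>M_1}(k-2)p_k=O(M_1^{1-\alpha})=o(1)$; hence a.a.s.\ $V_{\le M_1}=(1+O(\eps))a_\CT E\,n$ and therefore $V_{>M_1}=(1\pm O(\eps))\gamma_\T n$, where $\gamma_\T=v_\CT-a_\CT E$. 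That $\gamma_\T>0$ (so that this is a genuine macroscopic quantity) follows from the standing assumption $\Phi_z(\rho_N,1,N_0)<0$ by inspecting the linear system of Lemma~\ref{lem:system}, and we take it for granted here.

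The heart of the argument is the estimate $\E{c(k;\Nl_n)}=O\big(n\,k^{-\alpha-1}\big)$, \emph{uniformly} for $3\le k\le(1-\eps)\gamma_\T n$. To obtain it, introduce a catalytic variable $u$ marking cores of size exactly $k$: the egf $F(x,u)$ of so-marked networks satisfies the equation of Lemma~\ref{lem:egfsNetworks} with the $T(x,z)$-term of $\Phi$ augmented by $(u-1)\,x^{k-2}[\tilde x^{k-2}]T(\tilde x,z)$, and implicit differentiation at $u=1$ gives
\[
	\partial_uF(x,1,u)\big|_{u=1}=-\,\frac{x^{k-2}\,[\tilde x^{k-2}]T(\tilde x,N(x,1))}{\Phi_z(x,1,N(x,1))},\qquad
	\E{c(k;\Nl_n)}=\frac{[x^n]\,\partial_uF|_{u=1}}{[x^n]N(x,1)} .
\]
In the critical case $\rho_N=\rho_T(N_0)$ and $\Phi_z(\rho_N,1,N_0)=\lambda\neq0$, so the denominator contributes no pole but only a singular term $\propto(1-x/\rho_N)^{\alpha-1}$ at $\rho_N$ (inherited, after differentiation and via Lemma~\ref{lem:PhiSingular}, from the $(1-x/\rho_T(z))^{\alpha}$-term of $T$), while the numerator has singular exponent $\alpha$ and value $p_k\,T(\rho_N,N_0)$ at $x=\rho_N$. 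Extracting the $x^n$-coefficient after factoring out $x^{k-2}$ — so that one reads the $(n-k+2)$-th coefficient of a function $\Delta$-analytic at $\rho_N$ — gives $[x^n]\partial_uF|_{u=1}=\Theta\big(p_k\,(n-k+2)^{-\alpha}\rho_N^{-n}\big)$; dividing by $[x^n]N(x,1)=\Theta\big(n^{-\alpha-1}\rho_N^{-n}\big)$ and using $n-k+2=\Theta(n)$ on the stated range yields the bound. The step I expect to be the real obstacle is the \emph{uniformity in $k$} up to $k$ comparable with $n$: the numerator also carries a $(1-x/\rho_N)^{\alpha}$-term whose coefficient is of the order of the mean number of edges of a $k$-vertex core, and one must verify this term does not swamp the leading $(1-x/\rho_N)^{\alpha-1}$-term. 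This is precisely where $\N(\T)$ being $\alpha$-nice enters: it forces $\T$ to be a sparse class, so that the mean edge-count is $\Theta(k)$ rather than $\Theta(k^2)$, and the competing ratio is $O\big(k/(n-k)\big)=O(1)$. Carrying out this uniform singularity analysis, in the spirit of Section~\ref{sec:singAnalysis}, is the main technical work.

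With these two inputs in hand the conclusion is a short counting argument. By the first-moment bound and Markov's inequality, for any fixed $K$,
\[
	\Pr{\exists\text{ a core of size in }\big(Kn^{1/\alpha},(1-\eps)\gamma_\T n\big)}\le\sum_{k>Kn^{1/\alpha}}\E{c(k;\Nl_n)}=O\big(K^{-\alpha}\big),
\]
which is $<\eps$ once $K=K(\eps)$ is large; likewise $\E{\sum_{M_1<k\le Kn^{1/\alpha}}(k-2)c(k;\Nl_n)}=O\big(nM_1^{1-\alpha}\big)=o(n)$, so a.a.s.\ the cores of size in $(M_1,Kn^{1/\alpha}]$ together contain only $o(n)$ labelled vertices. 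Intersecting with the a.a.s.\ event of the first paragraph, a.a.s.\ every core of size $>Kn^{1/\alpha}$ in fact has size $\ge(1-\eps)\gamma_\T n$, these cores together contain $V_{>M_1}-o(n)=(1\pm O(\eps))\gamma_\T n$ labelled vertices, and hence — since $2(1-\eps)\gamma_\T>(1+\eps)\gamma_\T$ when $\eps<\tfrac13$ and $\gamma_\T>0$ — there is exactly one of them, carrying $(1\pm O(\eps))\gamma_\T n$ labelled vertices. Being far larger than every other core, this one realizes $C_1(\Nl_n)$, so, adding back the two poles, $C_1(\Nl_n)=(1\pm O(\eps))\gamma_\T n$, while every other core has at most $Kn^{1/\alpha}\le\omega(n)n^{1/\alpha}$ vertices for $n$ large — the asserted bound only weakening as $\omega$ grows, so we may assume $\omega(n)\ge K$ eventually. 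Rescaling $\eps$ gives the lemma.
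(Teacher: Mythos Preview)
Your overall architecture --- write $C_1(\Nl_n)$ as $V_\CT$ minus the contribution of the small cores, control $V_\CT$ via Lemma~\ref{lem:system} and the small cores via Lemma~\ref{lem:counts} --- is exactly what the paper does. The genuine divergence is in how uniqueness of the large core is established.

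The paper does not attempt a uniform first-moment estimate at all. Instead it uses a \emph{structural} fact about the Tutte decomposition: if a network has two cores each of size exceeding $n_0=\omega(n)n^{1/\alpha}$, then there is a $2$-cut (either a cut-edge of the series decomposition, or the pair of poles) separating them, so that the network splits as a pair $(N_1,N_2)$ with $v(N_1),v(N_2)\ge n_0-2$. One then simply counts such pairs using the coefficient asymptotic $|\CN_\ell|\sim c\,\rho_N^{-\ell}\ell^{-\alpha-1}\ell!$, and a convolution estimate of the type $\sum_{n_0\le s\le n-n_0}s^{-\alpha-1}(n-s)^{-\alpha-1}=O(n^{-\alpha-1}n_0^{-\alpha})$ gives $A_2=o(|\CN_n|)$ directly. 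This avoids singularity analysis with a moving parameter entirely; the only analytic input is the fixed asymptotic for $|\CN_\ell|$. Once uniqueness is in hand, the intermediate range $[n_-,n_0]$ is handled not by a first moment but by the concentration result of Lemma~\ref{lem:countsMany} (with a suitable $\xi$), which is already available.

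Your route via the bivariate scheme $\partial_uF|_{u=1}=-x^{k-2}g_k(N)/\Phi_z$ and coefficient extraction is conceptually sound and would yield $\E{c(k;\Nl_n)}=O(nk^{-\alpha-1})$, but the step you flag as ``the main technical work'' is a real gap as written. You need a transfer theorem that is uniform over the family $\{G_k\}_k$; the competing $(1-x/\rho_N)^{\alpha}$-term carries a coefficient of order $g_k'(N_0)$, and your control of $g_k'(N_0)/g_k(N_0)=\Theta(k)$ appeals to sparsity of $\CT$, which is \emph{not} part of the definition of $\alpha$-nice. Even granting sparsity, one must also bound the full tail of the singular expansion of $G_k$ uniformly in $k$ to justify the $O$-constant, and this is not done. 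So your argument is plausible but incomplete, whereas the paper's combinatorial cut plus convolution is short, self-contained, and uses only ingredients already on the table.
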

\begin{proof} 
Set $n_0 := \omega(n) \cdot n^{1/\alpha}$. We will use a counting argument to show that the number of networks that have a unique core with more than $n_0$ vertices is asymptotically much larger than the number $A_2$ of networks that have at least two such cores.

The singularity expansion of $N(x,y)$ with respect to $x$ (Lemma~\ref{lem:PhiSingular}) 
together with the Transfer Theorem (Corollary~VI.1 in~\cite{FlajSed}) imply that there exists a constant $c$ such that as~$\ell \rightarrow \infty$
\begin{equation} \label{eq:NetsCount}
|\N_{\ell } | = (1+o(1)) c \rho_N^{-\ell} \ell^{-\alpha-1} \ell!.
\end{equation}

We bound $A_2$ as follows. Let $N$ be a network having $n$ labeled vertices. Assume that $N$ contains at least 
two cores each having at least $n_0$ vertices. Then there is either a cut-edge that splits $N$ in two networks that contain at least $n_0$ vertices each, or such a splitting can be obtained if we choose the two poles as the cut-set. So, every such network can be described by a triple $(N_1, e, N_2)$, where $e\in N_1$ (or, in slight abuse of notation, $e$ contains the poles of~$N_1$), $v(N_1), v(N_2) \ge n_0-2$, and we can construct $N$ by identifying the poles of $N_2$ with the endpoints of~$e$.


Note that by Theorem~\ref{thm:edges} we can assume that with probability $1-o(1)$ there are at most~$Kn$ choices for $e$, for some constant $K>0$.  Moreover, there are ${n \choose v(N_1)}$ ways to choose the labels for $N_1$.
By summing over all choices for $v(N_1)$ we obtain for $n$ large enough that
\begin{equation} \label{eq:Sum2}
\begin{split}
A_2 & \leq 2Kn \sum_{n_0/2 \leq s \leq n - n_0}{n \choose s}~|\N_s|~|\N_{n-s}|  + o(|\CN_n|)\\
&\leq 4K n\sum_{n_0/2 \leq s \leq n - n_0}{n \choose s} \cdot \rho_N^{-s} s^{-\alpha - 1} s! \cdot \rho_N^{-n+s} (n-s)^{-\alpha - 1} (n-s)! + o(|\CN_n|) \\
& = 4K n \rho_N^{-n}n!  \sum_{n_0/2 \leq s \leq n - n_0} s^{-\alpha - 1} (n-s)^{-\alpha-1 } + o(|\CN_n|)\\ 
&= 8K n  \rho_N^{-n}n! \sum_{n_0/2 < s \leq n/2} s^{-\alpha-1} (n-s)^{-\alpha-1} + o(|\CN_n|).
\end{split}
\end{equation}
The last sum can be bounded above as follows: 
\begin{equation*}
\begin{split}
& \sum_{n_0/2 \leq s \leq n/2} s^{-\alpha - 1} (n-s)^{-\alpha-1}
\leq \left(n \over 2\right)^{-\alpha-1}\sum_{n_0/2 \leq s \leq n/2} s^{-\alpha - 1} \
\leq  \left(n \over 2\right)^{-\alpha-1} \int_{n_0/2-1}^{n/2}s^{-\alpha - 1} ds \\
& = O \left(n^{-\alpha-1} n_0^{-\alpha}\right) =   O \left(n^{-\alpha-2} \omega(n)^{-1}\right).
\end{split}
\end{equation*}
So using (\ref{eq:NetsCount}) with $\ell=n$, we obtain $A_2 = o (|\N_n|)$.

This shows that there is a unique largest core with more than $n_0$ vertices. To complete the proof we show the claimed bound on its size. First, by applying Lemma~\ref{lem:system} we infer that a.a.s.\ the total number $\sum_k(k-2)c(k;\, \Nl_n)$ of labeled vertices in all cores of $\Nl_n$ is 
in $v_{\T} n \pm {\eps n \over 2}$. We now argue that the number total number $v_{n_0}$ of labeled vertices that lie in cores with a most $n_0$ labeled vertices is a.a.s.\ in $a_\CT\frac{\rho_N T_x(\rho_N, N_0)}{T(\rho_N, N_0)} \pm \frac{\eps n}2$. This will conclude the proof.

First note that $ v_{n_0} = \sum_{k=2}^{n_0} (k-2) c(k;\Nl_n)$.
Let $n_- := \left( {n \over \omega(n) \log n}\right)^{1/(\alpha +1)}$. 
We obtain a lower bound on $v_{n_0}$ by keeping the summands up to~$n_-$ in the above sum: 
 $v_{n_0} \geq  \sum_{k=2}^{n_-} (k-2) c(k;\Nl_n). $
Lemma~\ref{lem:counts} yields that for all~$3\le k \le n_-$ we have 
$c(k;\Nl_n) \geq (1-\eps) p_k a_{\T} n$ with probability at least $1 - \sum_{k=2}^{n_-} e^{- \Omega(\eps^2 p_k n)} = 1-o(1)$. 
So, for large $n$
\begin{equation}
\label{eq:Lower} 
\begin{split}
v_{n_0} &\geq (1-\eps) a_{\T} n \sum_{k=2}^{n_-} (k-2) p_k = (1-\eps) a_{\T} n 
\sum_{k=2}^{n_-} (k-2) {\rho_N^{k-2} [x^{k-2}] T(x,N_0 ) \over T(\rho_N,N_0 )} \\
& \geq  (1 -\eps) a_{\T} n {\rho_N T_x(\rho_N,N_0) \over T(\rho_N,N_0)} - {\eps n \over 2}, 
\end{split}
\end{equation}
where the last inequality follows from the fact that the sum $\sum_{k=2}^{\infty} k {\rho_N^k [x^k] T(\rho_N,N_0)}$ is convergent. 
Using the same reasoning, we also deduce that a.a.s. 
\begin{equation} \label{eq:FirstSumUpper} 
\begin{split} 
 \sum_{k=2}^{n_-} & (k-2) c(k;\Nl_n) \leq (1+\eps) a_{\T} n \sum_{k=2}^{n_-} (k-2) {\rho_N^{k-2} [x^{k-2}] T(x,N_0) \over T(\rho_N,N_0)} \\ 
& \leq (1+\eps) a_{\T} n {\rho_N T_x(\rho_N,N_0) \over T(\rho_N,N_0)} + \eps n. 
\end{split}
\end{equation}
Thus, to deduce a matching upper bound for $v_{n_0}$ it is sufficient to show that a.a.s.
\begin{equation} \label{eq:SecondSumToProve} 
\sum_{k=n_-}^{n_0} (k-2) c(k;\Nl_n) \leq  \eps n. 
\end{equation}
Let us set $n_+ :=\left( {n \over \omega(n) \log n}\right)^{1/\alpha}$. Moreover, set $\xi = (\omega(n)^{\alpha+1} \log n)^{1/\alpha}$ and note that the preconditions of Lemma~\ref{lem:countsMany} are satisfied with $k = n_+$ and this choice for $\xi$. We infer that a.a.s.\ for all $3 \le k \le n_+$
\[
	c(k, \xi k;\, \Nl_n) \le \frac32 \cdot p_{k, \xi k} n,
	~\text{ where }~
	p_{k, \xi k} = \sum_{\ell = k}^{\xi k} \frac{[x^{\ell-2}]T(x, N_0) \cdot \rho_N^{\ell-2}}{T(\rho_N, N_0)}
\]
Thus, by using~\eqref{eq:pk_asympototic} we infer that there is a $C'>0$ such that
\[
	\sum_{k= n_-}^{n_0} (k-2) c(k;\Nl_n)
	\leq 
	C\xi n \sum_{k= n_-}^{n_0} (k-2) \frac{[x^{k-2}]T(x, N_0) \cdot \rho_N^{\ell-2}}{T(\rho_N, N_0)} = o(n).
\]
This shows~\eqref{eq:SecondSumToProve}, and the proof is completed.
\end{proof}

\section{Remaining Proofs} \label{Sec:remProofs}
\subsection{Proofs of Section~\ref{Sec:Nets}}
\label{ssec:proofNets}

\begin{proof}[Proof of Proposition~\ref{prop:transferNB}]
Firstly, let us observe that there is a natural projection of $(\CN + 1)\times \X^2$ onto $\CN$ which maps an ordered pair in 
 $(\CN + 1)\times \X^2$ to its first element, which we call the \emph{underlying network}. 
Note also that for each network in $\CN_{n-2}$ its preimage 
under this map consists of $n(n-1)$ elements of $(\CN +1 )\times \X^2$. Therefore, since $\Pa$ is a property that is closed 
under automorphisms the proportion of networks in $\CN_{n-2}$ that have $\Pa$ equals the proportion of elements of 
$(\CN + 1)\times \X^2$ whose underlying network has $\Pa$. In other words, the probability that $\Nl_{n-2} \in \Pa$ equals the 
probability that if we choose uniformly at random an element of $(\CN + 1)\times \X^2$ then the underlying network has $\Pa$. 
We denote by $P$ the subset of $(\CN + 1)\times \X^2$ which is the preimage of the networks in $\N_{n-2}$ which have the property 
$\Pa$. So the proportion of $P$ in $(\CN + 1)\times \X^2$ is also at least $1-f(n-2)$. 

The latter probability space is mapped bijectively into $(1+e) \times \vec{\CB}_n$. 
So, in particular, $P$ is mapped bijectively into a set 
$P' \subseteq (1+e) \times \vec{\CB}_n$. Let us consider the subspace $e \times \vec{\CB}_n$ which 
contains precisely half of the elements of $(1+e)\times \vec{\CB}_n$. We have 
$${|P' \cap e \times \vec{\CB}_n |\over |e \times \vec{\CB}_n|} \geq 1 - 2f(n-2).$$    

On the other hand, the subset $e \times \vec{\CB}_n$ is mapped bijectively into the set 
$\vec{\CB}_n^{+} := \{ (B,e) \ : \ B \in \CB_n, \ e \in E(B)\}$. Therefore, if $P''$ is the image of $P'$ under this isomorphism, then 
${|P''| \over |\vec{\CB}_n^{+}|} \geq 1 -2f(n-2)$.  

But now we are able to relate probabilities in the uniform space $\vec{\CB}_n^+$ with probabilities in the space $\CB_n$. 
Note that there is a natural projection of  $\vec{\CB}_n^+$ onto $\CB_n$ where a pair $(B,e) \in \vec{\CB}_n^+$ is mapped 
to $B \in \CB_n$; we denote this by $\pi$.  
So $|\vec{\CB}_n^+| \leq \kappa n |\CB_n|$, as every graph in $\CB_n$ has at most $\kappa n$ edges. Furthermore, let $\overline{P}''$ be the complement of $P''$ in $\vec{\CB}_n^+$. Since 
every graph in $\CB_n$ contains at least $n$ edges, we have $|\pi (\overline{P}'')|\leq |\overline{P}''|/n$. 
Therefore we arrive at the relation
${|\pi (\overline{P}'') |\over |\CB_n|} \leq \kappa {|\overline{P}''|\over |\vec{\CB}_n^+|}$. Since the latter ratio is at most $2f(n-2)$, 
it turns out that ${|\pi (\overline{P}'') |\over |\CB_n|} \leq 2 \kappa f(n-2)$.  
\end{proof}  

\subsection{Proofs of Section~\ref{sec:singAnalysis}}

\begin{proof}[Proof of Theorem~\ref{thm:singTransferGeneral}]
Let $s = \lfloor k/m\rfloor$ and abbreviate $F = \left(1 - {f}/{r(x,y)}\right)^{{1}/{m}}$. The analyticity of $g,h$ implies that $G$ can be represented as
\[
	G(x,y,f) = a_0 + a_mF^m + \dots + a_{sm}F^{sm} + a_kF^k + \dots,
\]
where the $a_i = a_i(x,y)$ are auxiliary analytic functions. In particular, we have that
\begin{equation}
\label{eq:a0amak}
	a_0 = g(x,y,r(x_0, y_0)),
	\enspace
	a_m = -g_f(x,y,r(x_0, y_0))r(x_0, y_0),
	\enspace \text{and} \enspace
	a_k(x_0, y_0) \neq 0.
\end{equation}
The definition of $F$ implies that $f = r(1 - F^m)$, where $r = r(x,y)$. The (unknown) function~$f$ hence satisfies the equation
\begin{equation}
\label{eq:vorbereitung}
	r - a_0 = (a_m + r)F^m + a_{2m}F^{2m} + \dots + a_{sm}F^{sm} + a_kF^k + \dots.
\end{equation}
Note that $a_0(x_0, y_0) = g(x_0, y_0, r(x_0, y_0)) = r(x_0, y_0)$, which implies that the the left-hand side of the above equation vanishes at $(x_0, y_0)$. Moreover, our assumptions imply that $r_x(x_0, y_0) - (a_0)_x(x_0, y_0) = r_x(x_0, y_0) -  g_x(x_0, y_0, r(x_0, y_0)) \neq 0$. By applying the Preparation Theorem by Weierstrass we thus infer the existence of analytic functions $H(x,y)$ and $\rho(y)$ such that $H(x_0, y_0) \neq 0$, $\rho(y_0) = x_0$ and locally around $(x_0, y_0)$
\[
	r - a_0 = H(x,y)(x - \rho(y)).
\]
Set $X = (1 - x/\rho(y))^{1/m}$. Equation~\eqref{eq:vorbereitung} is then around $(x_0, y_0)$ equivalent to
\[
	(-H(x,y)\rho(y))X^m = F^m\left((a_m + r) + a_{2m}F^{m} + \dots + a_{sm}F^{(s-1)m} + a_kF^{k-m} + \dots\right).
\]
Recall that for any $|x|<1$ and $\alpha\in\mathbb{C}$ we have that $(1 + x)^\alpha = \sum_{k\ge 0} \binom{\alpha}{k}x^k$. Using this, the above is equivalent to
\[
	(-H(x,y)\rho(y))^{1/m}X = F\left((a_m + r)^{1/m} + \tilde{a}_{2m}F^{m} + \dots + \tilde{a}_{sm}F^{(s-1)m} + \tilde{a}_kF^{k-m} + \dots\right), 
\]
where the $\tilde{a}_i$'s are functions given in terms of the $a_i$'s, and in particular $\tilde{a}_k = \frac{a_k}{m(a_m + r)^{{(m-1)}/{m}}}$. As $H(x_0,y_0)\rho(y_0) \neq 0$ and 
\[
	a_m(x_0,y_0) + r(x_0,y_0) \stackrel{\eqref{eq:a0amak}}{=} (1 - g_f(x_0,y_0,f_0))r(x_0,y_0) \neq 0,
\]
the above relation between $X$ and $F$ is locally invertible around $(x_0,y_0)$. By indeterminate coefficients we obtain
\[
	F = \left(\frac{-H(x,y)\rho(y)}{a_m+r}\right)^{1/m} X + b_{m+1}X^{m+1} + \dots -\tilde{a}_k\frac{(-H(x,y)\rho(y))^{\frac{k-m+1}{m}}}{(a_m+r)^{\frac{k-m+2}{m}}}X^{k-m+1} + \dots,
\]
where the $b_i$'s, $i \in\{ jm + 1 : 1\le j\le s\}$ are analytic functions of the $\tilde{a}_i$'s. By taking the $m$th power of both sides of the above equation we obtain
\[
	1 - \frac{f}{r(x,y)} = \frac{-H(x,y)\rho(y)}{a_m+r}X^m + \dots - \tilde{a}_km\frac{(-H(x,y)\rho(y))^{k/m}}{(a_m+r)^{(k+1)/m}}X^k + \dots.
\]
This completes the proof of~\eqref{eq:fsingExp}, as it readily follows from our assumptions that the coefficient of $X^k$ above is $\neq 0$ in a neighborhood of $(x_0, y_0)$.
\end{proof}

\subsection{Proofs of Sections~\ref{Sec:Samplers} and~\ref{sec:CoresInRandomNets}}
\label{ssec:proofsSystem}

\begin{proof}[Proof of Lemma~\ref{lem:relations}]
To see (\ref{eq:nSvT}), we define a mapping from the set of vertices of the resulting network into  the set of calls of the 
samplers $\Gamma N, \Gamma S, \Gamma P$ and $\Gamma H$. More specifically, we map a vertex to the call of the subroutine where 
this vertex appeared for the first time. Note that, in fact, the image of this map is the union of the calls of $\Gamma S$ and $\Gamma H$ only,
as these are the only subroutines where vertices are created. The preimage of each call of $\Gamma S$ consists of only one vertex, whereas 
the preimage of each call of $\Gamma H$ consists of as many vertices as the number of vertices of the sample from $\CT$, which is used in the 
call of $\Gamma H$. Note that once a vertex has been created it can never be identified with another vertex but only with a pole.  Thus 
(\ref{eq:nSvT}) follows. 

Similarly, (\ref{eq:edges}) follows from a similar mapping of the set of edges of the resulting network to the set of calls of the samplers 
$\Gamma N, \Gamma S, \Gamma P$ and $\Gamma H$, where an edge is mapped to that call where it was created. In this case, the image 
of the mapping is contained into the union of the sets of calls of $\Gamma N$, $\Gamma S$ and $\Gamma P$. This is the case, since 
in any call of $\Gamma H$ each edge of the sample from $\T$ is replaced by a network which is the result of a call of $\Gamma N$. 
and it is the calls of $\Gamma N$ which may yield an edge. Also, among the calls of $\Gamma S$ those 
which create an edge are those whose left part consists of an edge. Finally, the calls of $\Gamma P$ which create an edge are those which 
result into a parallel network  of the first type, that is, a parallel network which consists of an edge and a set of at least one $\CS $ or $\CH$ 
network. Thus, (\ref{eq:edges}) follows. 

Equation (\ref{eq:aNaSeT}) follows by mapping the set of calls of $\Gamma N$ (apart from the initial call) 
to the union of the sets of calls of  $\Gamma S$ or $\Gamma H$, where a call of $\Gamma N$ is mapped to the subroutine which started it.
(Note that, apart from the initial call of $\Gamma N$, it is only these samplers that are able to call $\Gamma N$.)  In particular, 
a call of $\Gamma H$ makes precisely one call of $\Gamma T$ and, conversely, each call of $\Gamma T$ is made by a call of $\Gamma H$. 
Furthermore, for each edge of the sampled network of type $\T$ precisely one call of $\Gamma N$ is made. This concludes the proof of (\ref{eq:aNaSeT}). 

As far as (\ref{eq:aj}) is concerned this is simply counting the number of calls of $\Gamma P$ which yield a parallel network of the first type 
(if $j=1$) or a parallel network of the second type (if $j=2$). 

Equation (\ref{eq:aP}) follows with a similar argument by mapping each call of $\Gamma P$ to the call of $\Gamma N$ or $\Gamma S$ which 
created it, as these are the only types among our Boltzmann samplers which can call directly $\Gamma P$. Thus (\ref{eq:aP}) holds and the 
same kind of argument works also for (\ref{eq:aS}) as well as for (\ref{eq:aT}). For the latter, we need the observation that there is a
one-to-one correspondence between the calls of $\Gamma H$ and the calls of $\Gamma T$.
\end{proof}

\begin{proof}[Proof of Lemma~\ref{lem:poissonConditional}]
We start with the case $j = 1$. For a $\mathtt{Po}_{\ge1}(\mu)$ variable $X$ and $\xi \ge 1$ we have $\E{\xi^{X}} = \frac{e^{\xi\mu}-1}{e^\mu-1}$. Using Markov's inequality, this implies for any $\xi \ge 1$, $0 \le r'\le r$, and~$\eps > 0$
\begin{equation}
\label{eq:f}
	\Pr{Y_{r'} \ge (1 + \eps)r'\E{X}}
	\le 
	e^{r'f(\xi)}
	, \text{ where }
	f(\xi) = \log\left(\frac{e^{\xi\mu} - 1}{e^\mu-1}\right) -(1 + \eps) \E{X} \log\xi.
\end{equation}
Note that
\[
	f'(\xi) = \frac{\mu e^{\xi\mu}}{e^{\xi\mu} - 1} - \frac{(1 + \eps)\E{X}}{\xi}
	~\text{ and }~
	f''(\xi) = -\frac{\mu^2 e^{\xi\mu}}{(e^{\xi\mu} - 1)^2} + \frac{(1 + \eps)\E{X}}{\xi^2}.
\]
The function $\frac{x}{(x-1)^2}$ is strictly monotone decreasing for $x > 1$. Together with the triangle inequality this implies for $\xi \ge 1$ 
\[
	|f''(\xi)| \le \frac{\mu^2 e^{\mu}}{(e^{\mu}-1)^2} + (1+\eps)\E{X} \le (\E{X} + 1 + \eps)\E{X}.
\]
Write $\xi = 1 + \delta$, where $\delta \ge 0$. We obtain by applying Taylor's Theorem
\[
	f(\xi) \le f(1) + f'(1)\delta + (\E{X} + 1 + \eps)\E{X}\delta^2.
\]
As $f(1) = 0$ and $f'(1) = -\eps \E{X}$, by setting $\delta = \frac{\eps}{2(1 + \eps + \E{X})}$ and using~\eqref{eq:f} we infer that 
\[
	\Pr{Y_{r'} \ge (1 + \eps)r'\E{X}} \le e^{-\frac{\eps^2}{4(1 + \eps + \E{X})}\E{Y_{r'}}}.
\]
We can deal completely analogously with the lower tail of the distribution of $Y_{r'}$ -- the straightforward details are omitted. 

Using the above bounds we obtain readily for any $0 \le r'\le r$ that there is a constant $C = C(\mu) > 0$  such that
\[
	\Pr{\left|Y_{r'} - r' \frac{\mu}{1 - e^{-\mu}}\right| \ge \eps r}
	\le e^{-C'\eps^2 r}.
\]
The claim for $j=1$ then follows with plenty of room to spare by applying the union bound and using that $\eps \ge \frac{\log r}{\sqrt{r}}$. As the calculations are very similar for the case $j = 2$ we leave them to the reader.
\end{proof}

\medskip

{\footnotesize \obeylines \parindent=0pt




\end{document}